\documentclass[Times1COL]{WileyNJDv5}
\usepackage{amssymb}
\usepackage{mathptmx}
\usepackage{helvet}
\usepackage{courier}
\usepackage{makeidx}
\usepackage{amsmath,amsfonts}
\usepackage{graphicx}
\usepackage{cite}
\usepackage[bottom]{footmisc}
\usepackage{bm}
\usepackage{color}

\begin{document}

\articletype{RESEARCH ARTICLE}

\received{Date Month Year}
\revised{Date Month Year}
\accepted{Date Month Year}
\journal{ZAMM-Zeitschrift fur Angewandte Mathematik und Mechanik}
\volume{00}
\copyyear{2025}
\startpage{1}

\raggedbottom

\title{Closed Form Relations and Higher-Order Approximations of First and Second Derivatives of the Tangent Operator on SE(3)}
\author[1]{Andreas M\"uller}

\titlemark{Closed Form Relations and higher-Order Approximations of First and Second Derivatives of the Tangent Operator on SE(3)}
\authormark{A. M{\"u}ller}

\address[1]{\orgdiv{Institute of Robotics}, \orgname{Johannes Kepler University Linz}, \orgaddress{\country{Austria}}}

\corres{Corresponding author Andreas M\"uller, \email{a.mueller@jku.at}}

\presentaddress{Altenberger Str. 69, 4040 Linz}

\abstract[Abstract]{
The Lie group $SE\left( 3\right) $ of isometric orientation preserving
transformation is used for modeling multibody systems, robots, and Cosserat
continua. The use of these models in numerical simulation and optimization
schemes necessitates the exponential map, its right-trivialized differential
(often referred to as tangent operator), as well as higher derivatives in
closed form. The $6\times 6$ matrix representation of the differential, $%
\mathbf{dexp}_{\mathbf{X}}:se\left( 3\right) \rightarrow se\left( 3\right) $%
, and its first derivative were reported using a $3\times 3$ block
partitioning. In this paper, the differential, its first and second
derivative, as well as the Jacobian and Hessian of the evaluation maps, $%
\mathbf{dexp}_{\mathbf{X}}\mathbf{Z}$ and $\mathbf{dexp}_{\mathbf{X}}^{T}%
\mathbf{Z}$, are reported avoiding the block partitioning. For all of them,
higher-order approximations are derived. Besides the compactness, the
advantage of the presented closed form relations is their numerical
robustness when combined with the local approximation. The formulations are
demonstrated for computation of the deformation field and the strain rates
of an elastic Cosserat-Simo-Reissner rod.
}

\keywords{Lie group SE(3), exponential map, higher-order derivatives, kinematics, rigid body motions, geometrically exact rod}


\maketitle

\renewcommand\thefootnote{\fnsymbol{footnote}}
\setcounter{footnote}{1}

\parindent0pt \parskip2pt \setcounter{topnumber}{9} %
\setcounter{bottomnumber}{9} \renewcommand{\textfraction}{0.00001}

\renewcommand {\floatpagefraction}{0.999} \renewcommand{\textfraction}{0.01} %
\renewcommand{\topfraction}{0.999} \renewcommand{\bottomfraction}{0.99} %
\renewcommand{\floatpagefraction}{0.99} \setcounter{totalnumber}{9}

\section{Introduction}

Computational multibody system (MBS) dynamics \cite%
{EngoMarthinsen2001,BrulsCardonaArnold2012,SonnevilleCardonaBruls2014,TerzeMueller2016,ChevallierLerbet2017}
and modern robotics \cite{LynchPark2017,MurrayBook,SeligBook} makes
extensive use of the special Euclidean group, denoted with $SE\left(
3\right) $. This Lie group provides the framework for describing spatial
frame motions, which can be used to represent rigid body motions but also
the deformation field of continua within the micropolar theory \cite%
{Cosserat1909,TruesdellToupin1960}, such as Cosserat beam and shell models 
\cite%
{Simo1985,BorriBottasso1994a,BorriBottasso1994b,SonnevilleCardonaBruls2014,SonnevillePhD,BauchauSonneville_Shells2021,HerrmannKotyczka2024,Ge2025}%
. The latter has become a key concept in soft robotics \cite%
{ArmaniniBoyer2023}. The exponential map $\exp :se\left( 3\right)
\rightarrow $ $SE\left( 3\right) $ provides a natural parameterization in
terms of canonical coordinates, which are identical to the instantaneous
screw coordinates associated to the frame motion, a concept known from
theoretical kinematics \cite{BottemaRoth1979,McCarthyBook1990}. Robot
control as well as computational methods for MBS dynamics necessitate the
derivative of the coordinate map. On $SE\left( 3\right) $, being a Lie
group, this boils down to the differential of the exponential map evaluated
at the origin. A typical application of this trivialized differential is to
express the rigid body twist in terms of time derivatives of the
coordinates, or to compute the deformation field in a geometrically exact
rod model. Moreover, the first and second derivatives (respectively Jacobian
and Hessian) of terms involving the differential are needed within optimal
control and numerical integration schemes, e.g. implicit generalized $\alpha 
$ methods \cite%
{KryslEndres2005,ArnoldBrulsCardona2015,ArnoldHante2017,BrulsCardona2010,BrulsCardonaArnold2012}%
. Higher derivatives are required in other applications, such as
higher-order smooth interpolation of spatial motions or deformations \cite%
{ParkRavani1997,POESplines2025}, or simply for computing rigid body
acceleration and jerk, which is necessary for trajectory planning with
controllable smoothness \cite{Reiter-TIT2018}.

Closed form expressions for the first derivative of the differential of the
exponential map where reported in the literature \cite%
{ParkChung2005,SonnevillePhD,RSPA2021}. These formulations exploited the
semidirect product topology of $SE\left( 3\right) $, which leads to a $%
3\times 3$ block partitioning with separate expressions for translations and
rotations. This leads to complicated and computationally complex
formulations, however. Within numerical Lie group integration schemes and
for approximation of curves on Lie groups, the use of approximations of the
exponential map was proposed. To this end, truncated series expansions of
the exponential on $SE\left( 3\right) $ and of its derivative were employed.
No systematic approach has been introduced, however. The approximation of
the exponential map on a general matrix Lie group was addressed in \cite%
{ZannaMuntheKaas2002}\cite{IserlesZanna2005}, and on a semi-direct product
Lie group in \cite{NobariHosseini2010}.

The contribution of this paper is three-fold. Firstly, compact \emph{%
non-partitioned} closed form expressions for higher derivatives of the
exponential map on $SE\left( 3\right) $ in terms of $6\times 6$ matrices are
derived in Sec. \ref{secDerivatives}, avoiding explicit partitioning in $%
3\times 3$ matrices. This includes derivatives of the exponential map as
well the Jacobian and Hessian of its evaluation map, which are crucial for
optimization and dynamics simulation of flexible MBS. The limit values for
all expressions at the parameterization singularity are also reported. 
\color[rgb]{0,0,0}%
The main advantage of the $6\times 6$ representation is that it yields
compact expressions that are simpler to implement. 
\color{black}%
Secondly, higher-order approximations of all above relations are derived in
Sec. \ref{secApprox1} and \ref{secApprox2}. They are crucial for robust
numerical evaluation of the closed form relations, but are also advantageous
within Lie group integration and approximation schemes. Thirdly, an
exhaustive summary of already reported closed form expressions for higher
derivatives using the block partitioned representation, including their
limit values, is reported in appendix \ref{secAppSE3Block}. 
\color[rgb]{0,0,0}%
A Mathematica$^{\copyright }$ library comprising all presented formulae is
available at
https://github.com/andreasmuellerjku/SO3-SE3-exp-Derivatives-Jacobian-Hessian%
\color{black}%

\section{Special Euclidean Group and Frame Transformations in $E^{3}$\label%
{secSE3Block}}

\subsection{Preliminaries}

Let $G$ be an $n$-dimensional Lie group with Lie algebra $\mathfrak{g}$. Lie
algebra elements are denoted $\hat{\mathbf{X}}\in \mathfrak{g}$, and when
represented as vectors, they are denoted $\mathbf{X}\in {\mathbb{R}}^{n}$,
which implies an obvious isomorphism. The exponential map $\exp :\mathfrak{g}%
\rightarrow G$ induces a parameterization of $G$ in terms of canonical
coordinates (of first kind). Its right-trivialized differential $\mathrm{dexp%
}_{\mathbf{X}}:\mathfrak{g}\rightarrow \mathfrak{g}$ is defined as%
\begin{equation}
\mathrm{dexp}_{\hat{\mathbf{X}}}(\hat{\mathbf{Y}})=\left( \mathrm{D}_{\hat{%
\mathbf{X}}}\exp \right) 
\hspace{-0.5ex}%
(\hat{\mathbf{Y}})\exp (-\hat{\mathbf{X}})  \label{diff1}
\end{equation}%
where $\left( \mathrm{D}_{\hat{\mathbf{X}}}\exp \right) 
\hspace{-0.5ex}%
(\hat{\mathbf{Y}}):=\frac{d}{dt}\exp (\hat{\mathbf{X}}+t\hat{\mathbf{Y}}%
)|_{t=0}$ is the directional derivative $\mathrm{D}_{\hat{\mathbf{X}}}\exp :%
\mathfrak{g}\rightarrow T_{\exp \hat{\mathbf{X}}}G$ of $\exp $ at $\hat{%
\mathbf{X}}$ in direction of $\hat{\mathbf{Y}}$. Let $g\left( t\right) =\exp 
\hat{\mathbf{X}}\left( t\right) $ be a curve in $G$. Then $\dot{g}g^{-1}=%
\mathrm{dexp}_{\hat{\mathbf{X}}}(\hat{\dot{\mathbf{X}}})\in \mathfrak{g}$ is
the corresponding right-invariant vector field on $\mathfrak{g}$. The left
invariant vector field is obtained as $g^{-1}\dot{g}=\mathrm{dexp}_{-\hat{%
\mathbf{X}}}(\hat{\dot{\mathbf{X}}})\in \mathfrak{g}$ (notice the negative
sign of the argument), where $\mathrm{dexp}_{-\hat{\mathbf{X}}}$ is
occasionally called the left-trivialized differential. The right-trivialized
dexp map is known as Magnus expansion \cite{Iserles1984}, which was
originally reported by Hausdorff \cite[pp. 26 \& 36ff]{Hausdorff1906}, and
later by Magnus \cite{Magnus1954} and Varadarajan \cite[Theorem 2.14.3.]%
{Varadarajan1984}.

\subsection{Block Partitioned Representation of Semi-Direct Product Group}

The special Euclidean group is the semi-direct product group $SE\left(
3\right) =SO\left( 3\right) \ltimes {\mathbb{R}}^{3}$. This is encoded in
the matrix representation of $SE\left( 3\right) $ in terms of $4\times 4$
matrices (often referred to 'homogenous' transformation matrices)%
\begin{equation}
\mathbf{H}=%
\begin{bmatrix}
\mathbf{R} & \mathbf{r} \\ 
\mathbf{0} & 1%
\end{bmatrix}%
\in SE\left( 3\right)
\end{equation}%
which describes the transformation of point coordinates in $E^{3}$. Matrix $%
\mathbf{H}$ also serves to represent the relative configuration of two
frames, and thus of a Euclidean motion, described by the rotation matrix $%
\mathbf{R}\in SO\left( 3\right) $ and displacement vector $\mathbf{r}\in {%
\mathbb{R}}^{3}$. The Lie algebra $se\left( 3\right) $ consists of matrices
of the form%
\begin{equation}
\hat{\mathbf{X}}=%
\begin{bmatrix}
\tilde{\mathbf{x}} & \mathbf{y} \\ 
\mathbf{0} & 0%
\end{bmatrix}%
\in se\left( 3\right)  \label{Xhat}
\end{equation}%
where the notation $\tilde{\mathbf{x}}\in so\left( 3\right) $ is used to
assign the skew symmetric matrix to the vector $\mathbf{x}\in \mathbb{R}^{3}$%
. There is an obvious isomorphism $se\left( 3\right) =so\left( 3\right)
\ltimes {\mathbb{R}}^{3}\cong {\mathbb{R}}^{3}\times \mathbb{R}^{3}$, where
matrix $\hat{\mathbf{X}}\in se\left( 3\right) $ is given in terms of vector $%
\mathbf{X}=\left( \mathbf{x},\mathbf{y}\right) \in {\mathbb{R}}^{6}$.
Capital letters are used to denote canonical coordinates on $se\left(
3\right) $, while small letters are used to denote the canonical coordinates
on the respective factor, i.e. $\mathbf{x}\in {\mathbb{R}}^{3}\cong so\left(
3\right) $ and $\mathbf{y}\in {\mathbb{R}}^{3}$ are coordinates on $SO\left(
3\right) $ and ${\mathbb{R}}^{3}$, respectively. The exponential map on $%
SE\left( 3\right) $ can be expressed explicitly as 
\begin{equation}
\exp (\hat{\mathbf{X}})=%
\begin{bmatrix}
\exp \tilde{\mathbf{x}} & \mathbf{dexp}_{\mathbf{x}}\mathbf{y} \\ 
\mathbf{0} & 1%
\end{bmatrix}
\label{expSE3}
\end{equation}%
where $\exp \tilde{\mathbf{x}}$ is the exponential map on $SO\left( 3\right) 
$, and $\mathbf{dexp}_{\mathbf{x}}\mathbf{y}$ is the matrix form of its
right-trivialized differential. Kinematically, $\mathbf{H}=\exp (\hat{%
\mathbf{X}})$ describes a screw motion in $E^{3}$, and $\mathbf{X}=\left( 
\mathbf{x},\mathbf{y}\right) $ is called the instantaneous screw coordinate
vector, where $\mathbf{x}$ is the scaled rotation vector (unit vector along
the rotation axis times the angle), also called Rodrigues vector.

Closed form expressions for the exp map and its differential on $SO\left(
3\right) $ are \cite{RSPA2021}%
\begin{eqnarray}
\exp \tilde{\mathbf{x}} &=&\mathbf{I}+\alpha \tilde{\mathbf{x}}+\tfrac{1}{2}%
\beta \tilde{\mathbf{x}}^{2}  \label{expSO3} \\
\mathbf{dexp}_{\mathbf{x}} &=&\mathbf{I}+\tfrac{\beta }{2}\tilde{\mathbf{x}}%
+\delta \tilde{\mathbf{x}}^{2}  \label{dexpSO31} \\
&=&\mathbf{I}+\tfrac{\beta }{2}\tilde{\mathbf{x}}+\left( 1-\alpha \right) 
\tilde{\mathbf{n}}^{2}  \label{dexpSO32} \\
\mathbf{dexp}_{\mathbf{x}}^{-1} &=&\mathbf{I}-\tfrac{1}{2}\tilde{\mathbf{x}}+%
\tfrac{1}{\varphi ^{2}}\left( 1-\gamma \right) \tilde{\mathbf{x}}^{2}
\label{dexpInvSO31} \\
&=&\mathbf{I}-\tfrac{1}{2}\tilde{\mathbf{x}}+\left( 1-\gamma \right) \tilde{%
\mathbf{n}}^{2}  \label{dexpInvSO32}
\end{eqnarray}%
with $\mathbf{n}:=\mathbf{x/}\left\Vert \mathbf{x}\right\Vert $ being the
unit vector along the instantaneous rotation axis, and%
\begin{equation}
\alpha :=\mathrm{sinc}\varphi ,\ \ \beta :=\mathrm{sinc}^{2}\mathrm{\,}%
\tfrac{\varphi }{2},\ \gamma :=\tfrac{\alpha }{\beta },\ \ \delta :=\tfrac{%
1-\alpha }{\varphi ^{2}}  \label{alpha}
\end{equation}%
where the Euclidean norm $\varphi :=\left\Vert \mathbf{x}\right\Vert $ of $%
\mathbf{x}\in {\mathbb{R}}^{3}$ denotes the angle of rotation. Expression (%
\ref{expSO3}) is referred to as the Euler-Rodrigues formula. Using the
vector representation of $se\left( 3\right) \cong {\mathbb{R}}^{3}\times 
\mathbb{R}^{3}$, the differential of the exp map on $SE(3)$, and its
inverse, admit the $3\times 3$ block partitioned matrix form\footnote{%
\color[rgb]{0,0,0}Throughout the paper, the simplified notations $\exp 
\tilde{\mathbf{x}}=\exp \mathbf{x}$, $\exp (\hat{\mathbf{X}})=\exp (\mathbf{X%
})$, $\mathbf{dexp}_{\tilde{\mathbf{x}}}=\mathbf{dexp}_{\mathbf{x}}$, $%
\mathbf{dexp}_{\hat{\mathbf{X}}}=\mathbf{dexp}_{\mathbf{X}}$ etc are used.}%
\begin{equation}
\mathbf{dexp}_{\mathbf{X}}=%
\begin{bmatrix}
\mathbf{dexp}_{\mathbf{x}} & \mathbf{0} \\ 
\left( \mathrm{D}_{\mathbf{x}}\mathbf{dexp}\right) 
\hspace{-0.5ex}%
\left( \mathbf{y}\right) & \mathbf{dexp}_{\mathbf{x}}%
\end{bmatrix}%
,\ \ \ \mathbf{dexp}_{\mathbf{X}}^{-1}=%
\begin{bmatrix}
\mathbf{dexp}_{\mathbf{x}}^{-1} & \mathbf{0} \\ 
\left( \mathrm{D}_{\mathbf{x}}\mathbf{dexp}^{-1}\right) 
\hspace{-0.5ex}%
\left( \mathbf{y}\right) & \mathbf{dexp}_{\mathbf{x}}^{-1}%
\end{bmatrix}%
.  \label{dexpSE3Block}
\end{equation}%
The apparent block partitioning is inherited from (\ref{expSE3}). Closed
form expressions of the terms in (\ref{dexpSE3Block}) where reported \cite%
{ParkChung2005,SonnevillePhD,RSPA2021}. A comprehensive summary is given in
appendix \ref{secAppSE3Block}, including the limit values for $\mathbf{x}%
\rightarrow \mathbf{0}$.

The significance of the differential is that it relates frame velocity to
the change of canonical coordinates. The spatial angular velocity, defined
as the vector field $\tilde{\boldsymbol{\omega }}=\dot{\mathbf{R}}\mathbf{R}%
^{T}\in so\left( 3\right) $, is expressed with (\ref{dexpSO31}) as $%
\boldsymbol{\omega }=\mathbf{dexp}_{\mathbf{x}}\dot{\mathbf{x}}$. The
spatial velocity screw (twist) for a general rigid body motion is defined as 
$\hat{\mathbf{V}}=\dot{\mathbf{H}}\mathbf{H}^{-1}=\left[ 
\begin{smallmatrix}
\tilde{\boldsymbol{\omega }} & \mathbf{v} \\ 
\mathbf{0} & 0%
\end{smallmatrix}%
\right] \in se\left( 3\right) $, which corresponds to the vector $\mathbf{V}=%
\left[ 
\begin{smallmatrix}
\boldsymbol{\omega } \\ 
\mathbf{v}%
\end{smallmatrix}%
\right] \in {\mathbb{R}}^{3}\times \mathbb{R}^{3}$. It is related to the
change of (canonical) screw coordinates by $\mathbf{V}=\mathbf{dexp}_{%
\mathbf{X}}\dot{\mathbf{X}}$. The latter is called the \emph{local
reconstruction equation} since its solution yields the (local) coordinates $%
\mathbf{X}\left( t\right) $, and thus the motion, reconstructed from the
twist $\mathbf{V}\left( t\right) $. In this context, the equation $\dot{%
\mathbf{H}}=\hat{\mathbf{V}}\mathbf{H}$ is referred to as Poisson--Darboux
equation \cite{ConduracheAAS2018}. The body-fixed representation of twist is
defined as $\hat{\mathbf{V}}=\mathbf{H}^{-1}\dot{\mathbf{H}}$, and related
to the coordinate rate as $\mathbf{V}=\mathbf{dexp}_{-\mathbf{X}}\dot{%
\mathbf{X}}$.

\subsection{Application Examples\label{secApplications}}

\color[rgb]{0,0,0}%
A typical application of the \textbf{dexp} map in kinematics of rigid and
flexible bodies is to express the twist and deformation field, respectively.
The higher-kinematics involves the derivatives, Jacobian, and Hessian. This
is briefly outlines in the following. Numerical examples are presented in
Sec. \ref{secApprox} and \ref{secCosserat}.%
\color{black}%

\subsubsection{Acceleration and Jerk of a Rigid Body}

The kinematic problem of describing rigid body motions further includes
derivatives of the twist (called reduced acceleration). In terms of the time
derivative of the canonical coordinates, the first and second time
derivative of the spatial twist are%
\begin{eqnarray}
\dot{\mathbf{V}} &=&\mathbf{dexp}_{\mathbf{X}}\ddot{\mathbf{X}}+\left( 
\mathrm{D}_{\hat{\mathbf{X}}}\mathbf{dexp}\right) (\dot{\mathbf{X}})\dot{%
\mathbf{X}}  \label{Vd} \\
\ \ \ \ \ \ \ \ \ \ddot{\mathbf{V}} &=&\mathbf{dexp}_{\mathbf{X}}\dddot{%
\mathbf{X}}+2(\mathrm{D}_{\mathbf{X}}\mathbf{dexp})(\dot{\mathbf{X}})\ddot{%
\mathbf{X}}+(\mathrm{D}_{\mathbf{X}}\mathbf{dexp})(\ddot{\mathbf{X}})\dot{%
\mathbf{X}}+(\mathrm{D}_{\mathbf{X}}^{2}\mathbf{dexp})(\dot{\mathbf{X}})(%
\dot{\mathbf{X}})\dot{\mathbf{X}}.  \label{Vdd}
\end{eqnarray}%
In addition to the directional derivative $\left( \mathrm{D}_{\hat{\mathbf{X}%
}}\mathbf{dexp}\right) (\mathbf{U})$ of $\mathbf{dexp}_{\hat{\mathbf{X}}}$
along $\mathbf{U}\in \mathbb{R}^{6}$ the last term includes the repeated
directional derivative $(\mathrm{D}_{\mathbf{X}}^{2}\mathbf{dexp})(\mathbf{U}%
)(\mathbf{S})$ along $\mathbf{S}\in \mathbb{R}^{6}$.

\subsubsection{Gradient and Hessian of Elastic Potential of Non-Linear Rod
Models}

A geometrically exact rod is a 1-dimensional continuum where the cross
section exhibits spatial motions. Under the Bernoulli assumption of constant
cross section, the rod is kinematically represented by the motion of a cross
section frame as a function of the arc length. The rod kinematics thus
resembles a rigid body motion where the pose $\mathbf{H}\left( \tau \right)
\in SE(3)$ of a cross section frame describes the displacement field, and
depends on the (normalized) arc length $\tau \in \left[ 0,1\right] $. This
is referred to as Simo-Reissner \cite{Reissner1972,Reissner1973,Simo1985}
and Cosserat rod \cite{Cosserat1970,Antman2005} theory. A deformation
measure is defined as $\hat{\boldsymbol{\chi }}=\mathbf{H}^{-1}\mathbf{H}%
^{\prime }=\left[ 
\begin{smallmatrix}
\tilde{\boldsymbol{\kappa }} & \boldsymbol{\rho } \\ 
\mathbf{0} & 0%
\end{smallmatrix}%
\right] \in se\left( 3\right) $ \cite{ArmaniniBoyer2023} resembling the
rigid body twist above. This is a (left-invariant) body-fixed definition,
i.e. invariant under the change of global reference. The rod displacement is
parameterized in terms of screw coordinates $\mathbf{H}\left( \tau \right)
=\exp \hat{\mathbf{X}}(\tau )$, now depending on the arc length $\tau $. The
deformation is thus%
\begin{equation}
\boldsymbol{\chi }\left( \tau \right) =\mathbf{dexp}_{-\mathbf{X}\left( \tau
\right) }\mathbf{X}^{\prime }\left( \tau \right)  \label{RecSE3}
\end{equation}%
where the negative sign is due to the body-fixed representation. In context
of flexible MBS dynamics, the differential is called tangent operator \cite%
{BrulsCardonaArnold2012}, which usually refers to the right-trivialized
differential with the negative sign in (\ref{RecSE3}).

Central objects in computational flexible MBS dynamics are the directional
derivative of $\boldsymbol{\chi }$ and the deformation gradient $\frac{%
\partial \boldsymbol{\chi }}{\partial \mathbf{X}}=\frac{\partial }{\partial 
\mathbf{X}}\mathbf{\frac{\partial }{\partial \mathbf{X}}}\left( \mathbf{dexp}%
_{-\mathbf{X}}\mathbf{X}^{\prime }\right) $ \cite%
{BauchauBook2010,SonnevilleCardonaBruls2014}. Further relevant is the strain
rate, i.e. derivatives (\ref{Vd}) using (\ref{RecSE3}) with $\dot{\mathbf{V}}
$ replaced by $\mathbf{V}^{\prime }$.

Assuming linear Hookean elastic constitutive relations, the elastic
potential density $\bar{V}=\frac{1}{2}\left( \boldsymbol{\chi }-\boldsymbol{%
\chi }_{0}\right) ^{T}\mathbf{K}\left( \boldsymbol{\chi }-\boldsymbol{\chi }%
_{0}\right) $ is introduced, where $\mathbf{K}\left( \tau \right) $ is the
stiffness matrix related to the cross section at $\tau $, and $\boldsymbol{%
\chi }_{0}$ describes the undeformed geometry. In various contexts, the
gradient and the Hessian of the potential are needed. When $\mathbf{H}\left(
\tau \right) $ is parameterized with $\mathbf{X}\left( \tau \right) $, they
can be written as%
\begin{eqnarray}
\ \ \ \frac{\partial \bar{V}}{\partial \mathbf{X}} &=&\left( \boldsymbol{%
\chi }-\boldsymbol{\chi }_{0}\right) \mathbf{K}\frac{\partial \boldsymbol{%
\chi }}{\partial \mathbf{X}}=\left( \boldsymbol{\chi }-\boldsymbol{\chi }%
_{0}\right) \mathbf{K\frac{\partial }{\partial \mathbf{X}}}\left( \mathbf{%
dexp}_{-\hat{\mathbf{X}}}\mathbf{X}^{\prime }\right)  \label{VGrad} \\
\ \ \ \ \ \ \ \ \frac{\partial ^{2}\bar{V}}{\partial \mathbf{X}^{2}} &=&%
\frac{\partial ^{2}}{\partial \mathbf{X}^{2}}\left( \mathbf{a}^{T}\mathbf{%
dexp}_{-\mathbf{X}\left( \tau \right) }\mathbf{X}^{\prime }\left( \tau
\right) \right) +\left( \frac{\partial }{\partial \mathbf{X}}\mathbf{dexp}_{-%
\mathbf{X}}\mathbf{X}^{\prime }\right) ^{T}\mathbf{K}\frac{\partial }{%
\partial \mathbf{X}}\mathbf{dexp}_{-\mathbf{X}}\mathbf{X}^{\prime }
\label{VHesse}
\end{eqnarray}%
where $\mathbf{a}:=\mathbf{K}\left( \boldsymbol{\chi }-\boldsymbol{\chi }%
_{0}\right) $ is inserted after differentiation. Computing (\ref{VGrad}) and
(\ref{VHesse}) involves the first partial derivatives of the product $%
\mathbf{dexp}_{-\mathbf{X}}\mathbf{X}^{\prime }$ and the Hessian of $\mathbf{%
a}^{T}\mathbf{dexp}_{-\mathbf{X}}\mathbf{X}^{\prime }$ with given $\mathbf{a}
$ and $\mathbf{X}^{\prime }$.

It becomes obvious from the relations summarized in appendix \ref%
{secAppSE3Block} that the explicit expressions become involved, and further
that their robust numerical evaluation is challenging. To address this
issue, closed form compact relations are derived in the following. Key to
these formulations is to avoid explicit use of the block partitioning.

\section{Non-Partitioned $6\times 6$ Matrix Representation}

A matrix in (\ref{Xhat}) satisfies its characteristic equation $\hat{\mathbf{%
X}}^{4}+\left\Vert \mathbf{x}\right\Vert ^{2}\hat{\mathbf{X}}=\mathbf{0}$,
with $\left\Vert \mathbf{x}\right\Vert $ being the norm of $\mathbf{x}\in 
\mathbb{R}^{3}$. Inserted in the series expansion $\exp (\hat{\mathbf{X}}%
)=\sum_{i=0}^{\infty }\tfrac{1}{i!}\hat{\mathbf{X}}$ yields \cite{SeligBook}%
\begin{equation}
\exp (\hat{\mathbf{X}})=\mathbf{I}+\hat{\mathbf{X}}+\tfrac{1}{2}\beta \hat{%
\mathbf{X}}^{2}+\delta \hat{\mathbf{X}}^{3}
\end{equation}%
that replaces (\ref{expSE3}). The matrix in (\ref{dexpSE3Block}), and its
inverse, admit the series expansions 
\begin{equation}
\mathbf{dexp}_{\hat{\mathbf{X}}}=\sum_{i=0}^{\infty }\frac{1}{\left(
i+1\right) !}\mathbf{ad}_{\hat{\mathbf{X}}}^{i},\ \ \mathbf{dexp}_{\hat{%
\mathbf{X}}}^{-1}=\sum_{i=0}^{\infty }\frac{B_{i}}{i!}\mathbf{ad}_{\hat{%
\mathbf{X}}}^{i}  \label{dexpMat}
\end{equation}%
in terms of the $6\times 6$ adjoined operator matrix 
\begin{equation}
\mathbf{ad}_{\hat{\mathbf{X}}}=%
\begin{bmatrix}
\tilde{\mathbf{x}} & \mathbf{0} \\ 
\tilde{\mathbf{y}} & \tilde{\mathbf{x}}%
\end{bmatrix}
\label{adX}
\end{equation}%
\color[rgb]{0,0,0}%
where $B_{i}$ are the Bernoulli numbers. In particular, $B_{0}=1,B_{1}=-%
\tfrac{1}{2},B_{2}=\tfrac{1}{6},B_{4}=-\tfrac{1}{30},B_{6}=\tfrac{1}{46}$,
and $B_{2n+1}=0,n\geq 1$.%
\color{black}
The expressions (\ref{dexpMat}) are classical results presented for general
quadratic Lie groups \cite[pp. 26 \& 36ff]{Hausdorff1906},\cite[Theorem
2.14.3.]{Varadarajan1984},\cite{Iserles1984}. On $SE(3)$, the adjoined
operator matrix (\ref{adX}) satisfies the identity $\mathbf{ad}_{\hat{%
\mathbf{X}}}^{5}=-2\left\Vert \mathbf{x}\right\Vert ^{2}\mathbf{ad}_{\hat{%
\mathbf{X}}}^{3}$ $-\left\Vert \mathbf{x}\right\Vert ^{4}\mathbf{ad}_{\hat{%
\mathbf{X}}}$, which is deduced from its characteristic polynomial $\det
\left( \mathbf{I}-\lambda \mathbf{ad}_{\hat{\mathbf{X}}}\right) =\lambda
^{6}+2\left\Vert \mathbf{x}\right\Vert ^{2}\lambda ^{4}+\left\Vert \mathbf{x}%
\right\Vert ^{4}\lambda ^{2}$. Thus relations (\ref{dexpMat}) involve powers
of $\mathbf{ad}_{\hat{\mathbf{X}}}$ up to $k=4$ only. For sake of brevity,
the $i$-th power of the adjoint operator matrix is denoted%
\begin{equation}
\mathbf{P}_{i}\left( \mathbf{X}\right) :=\mathbf{ad}_{\mathbf{X}}^{i}
\label{Pi}
\end{equation}%
which satisfies $\mathbf{P}_{i}\left( \mathbf{X}\right) =\mathbf{ad}_{%
\mathbf{X}}\mathbf{P}_{i-1}\left( \mathbf{X}\right) =\mathbf{P}_{i-1}\left( 
\mathbf{X}\right) \mathbf{ad}_{\mathbf{X}}$. Therewith, explicit forms of
the right-trivialized differential, and its inverse, on $SE\left( 3\right) $
are obtained as%
\begin{eqnarray}
\mathbf{dexp}_{\hat{\mathbf{X}}} &=&\mathbf{I}+a_{1}\mathbf{P}_{1}\left( 
\mathbf{X}\right) +a_{2}\mathbf{P}_{2}\left( \mathbf{X}\right) +a_{3}\mathbf{%
P}_{3}\left( \mathbf{X}\right) +a_{4}\mathbf{P}_{4}\left( \mathbf{X}\right)
\label{dexpSE3} \\
\mathbf{dexp}_{\hat{\mathbf{X}}}^{-1} &=&\mathbf{I}+b_{1}\mathbf{P}%
_{1}\left( \mathbf{X}\right) +b_{2}\mathbf{P}_{2}\left( \mathbf{X}\right)
+b_{4}\mathbf{P}_{4}\left( \mathbf{X}\right)  \label{dexpInvSE3}
\end{eqnarray}%
where the coefficients $a_{i},b_{i}$ are in general analytic functions of $%
\mathbf{X}$. The non-zero coefficients are derived as%
\begin{eqnarray}
a_{1} &=&\beta -\tfrac{\alpha }{2},\ \ a_{2}=\tfrac{1}{2}\left( 5\delta -%
\tfrac{\beta }{2}\right) =\tfrac{1}{\varphi ^{2}}\left( \tfrac{5}{2}-\tfrac{5%
}{2}\alpha \right) -\tfrac{\beta }{4}  \notag \\
a_{3} &=&\tfrac{1}{2\varphi ^{2}}\left( \beta -\alpha \right) ,\ \ a_{4}=%
\tfrac{1}{2\varphi ^{2}}\left( 3\delta -\tfrac{\beta }{2}\right) =\tfrac{1}{%
2\varphi ^{4}}\left( 3-3\alpha \right) -\tfrac{\beta }{4\varphi ^{2}}
\label{ab} \\
b_{1} &=&-\tfrac{1}{2},\ \ b_{2}=\tfrac{1}{\varphi ^{2}}\left( 2-\tfrac{%
1+3\alpha }{2\beta }\right) ,\ b_{4}=\tfrac{1}{\varphi ^{4}}\left( 1-\tfrac{%
1+\alpha }{2\beta }\right) .  \notag
\end{eqnarray}%
The dependence of $a_{i}$ and $b_{i}$ on $\varphi $ only is due to the
semi-direct product structure of $SE\left( 3\right) $.

\begin{remark}
The closed form expressions can be robustly evaluated when $\left\Vert 
\mathbf{x}\right\Vert \rightarrow 0$ since the coefficients $\alpha ,\beta
,\gamma $ in (\ref{alpha}) are defined in terms of $\mathrm{sinc}$. The
expressions (\ref{dexpSO31}) and (\ref{dexpInvSO31}) for the differential
still involve division by $\left\Vert \mathbf{x}\right\Vert $, which may
lead to numerical issues (although $\delta $ and $\left( 1-\gamma \right)
/\varphi ^{2}$ converge to 0 for $\left\Vert \mathbf{x}\right\Vert
\rightarrow 0$). This is alleviated by using the normalized vector $\mathbf{n%
}$ in (\ref{dexpSO32}) and (\ref{dexpInvSO32}), which can be computed
robustly. Also some of the coefficients (\ref{ab}) involve division by $%
\left\Vert \mathbf{x}\right\Vert $. A computationally advantageous form
avoiding these divisions is obtained in terms of $\mathbf{N}:=\mathbf{X/}%
\left\Vert \mathbf{x}\right\Vert $. This yields the alternative expressions
of (\ref{dexpSE3}) and (\ref{dexpInvSE3})%
\begin{eqnarray*}
\mathbf{dexp}_{\hat{\mathbf{X}}} &=&\mathbf{I}+\left( \beta -\tfrac{\alpha }{%
2}\right) \mathbf{ad}_{\hat{\mathbf{X}}}-\tfrac{\beta }{4}\mathbf{ad}_{\hat{%
\mathbf{X}}}^{2}+\left( \tfrac{5}{2}-\tfrac{5}{2}\alpha \right) \mathbf{ad}_{%
\hat{\mathbf{N}}}^{2} \\
&&+\tfrac{1}{2}\left( \beta -\alpha \right) \mathbf{ad}_{\hat{\mathbf{N}}}%
\mathbf{ad}_{\hat{\mathbf{X}}}-\tfrac{\beta }{4}\mathbf{ad}_{\hat{\mathbf{N}}%
}^{2}\mathbf{ad}_{\hat{\mathbf{X}}}^{2}+\left( \tfrac{3}{2}-\tfrac{3}{2}%
\alpha \right) \mathbf{ad}_{\hat{\mathbf{N}}}^{4} \\
\mathbf{dexp}_{\hat{\mathbf{X}}}^{-1} &=&\mathbf{I}-\tfrac{1}{2}\mathbf{ad}_{%
\hat{\mathbf{X}}}+\left( 2-\tfrac{1+3\alpha }{2\beta }\right) \mathbf{ad}_{%
\hat{\mathbf{N}}}^{2}+\left( 1-\tfrac{1+\alpha }{2\beta }\right) \mathbf{ad}%
_{\hat{\mathbf{N}}}^{4}.
\end{eqnarray*}
\end{remark}

\section{Derivatives of the Differential in Non-Partitioned Representation}

\label{secDerivatives}

\subsection{Preliminaries}

The directional derivative of the matrix representation (\ref{dexpMat}) of
the right-trivialized differential along $\mathbf{U}=\left( \mathbf{u},%
\mathbf{v}\right) \in {\mathbb{R}}^{6}$ is denoted $\left( \mathrm{D}_{%
\mathbf{X}}\mathbf{dexp}\right) 
\hspace{-0.5ex}%
\left( \mathbf{U}\right) =\frac{d}{dt}\mathbf{dexp}_{\mathbf{X}+t\mathbf{U}%
}|_{t=0}$, and that of its inverse as $\left( \mathrm{D}_{\mathbf{X}}\mathbf{%
dexp}^{-1}\right) 
\hspace{-0.5ex}%
\left( \mathbf{U}\right) =\frac{d}{dt}\mathbf{dexp}_{\mathbf{X}+t\mathbf{U}%
}^{-1}|_{t=0}$. The second directional derivative along the vector $\mathbf{S%
}=\left( \mathbf{s},\mathbf{r}\right) \in {\mathbb{R}}^{3}\times {\mathbb{R}}%
^{3}$ is denoted with $\left( \mathrm{D}_{\mathbf{X}}^{2}\mathbf{dexp}%
\right) 
\hspace{-0.5ex}%
\left( \mathbf{U}\right) \left( \mathbf{S}\right) =\frac{d}{dt}\left( 
\mathrm{D}_{\mathbf{X}+t\mathbf{S}}\mathbf{dexp}\right) 
\hspace{-0.5ex}%
\left( \mathbf{U}\right) |_{t=0}$, and analogously for its inverse. The
following result is crucial for deriving closed form and recursive
expressions of the directional derivatives.

\begin{lemma}
\label{lemDerPi}The directional derivative of $\mathbf{P}_{i}$ along $%
\mathbf{U}=\left( \mathbf{u},\mathbf{v}\right) $ admits the following closed
form expression as well as the series expansion%
\vspace{-2ex}%
\begin{equation}
\left( \mathrm{D}_{\mathbf{X}}\mathbf{P}_{i}\right) 
\hspace{-0.5ex}%
\left( \mathbf{U}\right) =\sum_{j=0}^{i-1}\mathbf{P}_{j}\left( \mathbf{X}%
\right) \mathbf{ad}_{\mathbf{U}}\mathbf{P}_{i-j-1}\left( \mathbf{X}\right)
=\left( \mathrm{D}_{\mathbf{X}}\mathbf{P}_{i-1}\right) 
\hspace{-0.5ex}%
\left( \mathbf{U}\right) \mathbf{ad}_{\mathbf{X}}+\mathbf{P}_{i-1}\left( 
\mathbf{X}\right) \mathbf{ad}_{\mathbf{U}}.  \label{DPi}
\end{equation}
\end{lemma}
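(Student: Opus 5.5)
The key observation is that the map $\mathbf{X}\mapsto \mathbf{ad}_{\mathbf{X}}$ in (\ref{adX}) is linear in $\mathbf{X}=(\mathbf{x},\mathbf{y})$, since $\mathbf{x}\mapsto\tilde{\mathbf{x}}$ and $\mathbf{y}\mapsto\tilde{\mathbf{y}}$ are linear. Hence $\mathbf{ad}_{\mathbf{X}+t\mathbf{U}}=\mathbf{ad}_{\mathbf{X}}+t\,\mathbf{ad}_{\mathbf{U}}$, and the directional derivative of $\mathbf{P}_1$ is simply $(\mathrm{D}_{\mathbf{X}}\mathbf{P}_1)(\mathbf{U})=\mathbf{ad}_{\mathbf{U}}$. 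With the convention $\mathbf{P}_0=\mathbf{I}$, whose derivative vanishes, this is the case $i=1$ of both expressions in (\ref{DPi}).

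For general $i$, I would expand
\[
\mathbf{P}_i(\mathbf{X}+t\mathbf{U})=(\mathbf{ad}_{\mathbf{X}}+t\,\mathbf{ad}_{\mathbf{U}})^i .
\]
The matrices do not commute, so I keep the ordered words. The coefficient of $t$ collects exactly those words in which $\mathbf{ad}_{\mathbf{U}}$ appears once, in position $j+1$. It is preceded by $j$ factors $\mathbf{ad}_{\mathbf{X}}$ and followed by $i-j-1$ such factors. This gives the closed form
\[
\sum_{j=0}^{i-1}\mathbf{P}_j(\mathbf{X})\,\mathbf{ad}_{\mathbf{U}}\,\mathbf{P}_{i-j-1}(\mathbf{X}).
\]
Equivalently, one can apply the Leibniz rule for matrix products to the $i$ factors.

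For the recursive form, I would use $\mathbf{P}_i=\mathbf{P}_{i-1}\mathbf{ad}_{\mathbf{X}}$ and apply the product rule:
\[
(\mathrm{D}_{\mathbf{X}}\mathbf{P}_i)(\mathbf{U})=(\mathrm{D}_{\mathbf{X}}\mathbf{P}_{i-1})(\mathbf{U})\,\mathbf{ad}_{\mathbf{X}}+\mathbf{P}_{i-1}(\mathbf{X})\,\mathbf{ad}_{\mathbf{U}}.
\]
As a consistency check, inserting the closed form for $i-1$ reproduces the sum for $i$. The terms $j=0,\dots,i-2$ gain one extra factor $\mathbf{ad}_{\mathbf{X}}$ on the right, giving $\mathbf{P}_j\,\mathbf{ad}_{\mathbf{U}}\,\mathbf{P}_{i-j-1}$. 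The term $j=i-1$ is exactly $\mathbf{P}_{i-1}\mathbf{ad}_{\mathbf{U}}$. So an induction on $i$ establishes both equalities simultaneously.

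There is no real obstacle here. The only care needed is to respect the non-commutativity in the ordering of factors and to fix the convention $\mathbf{P}_0=\mathbf{I}$. It is also worth noting that the identity $\mathbf{ad}^5_{\mathbf{X}}=-2\Vert\mathbf{x}\Vert^2\mathbf{ad}^3_{\mathbf{X}}-\Vert\mathbf{x}\Vert^4\mathbf{ad}_{\mathbf{X}}$ plays no role in this lemma. It enters only later, when the derivative of the truncated expressions (\ref{dexpSE3}) is assembled, where the derivatives of the $\varphi$-dependent coefficients must be added separately.
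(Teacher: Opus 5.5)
Your proposal is correct and follows essentially the same route as the paper: linearity of $\mathbf{X}\mapsto\mathbf{ad}_{\mathbf{X}}$ gives $\mathbf{ad}_{\mathbf{X}+t\mathbf{U}}=\mathbf{ad}_{\mathbf{X}}+t\,\mathbf{ad}_{\mathbf{U}}$. Differentiating $(\mathbf{ad}_{\mathbf{X}}+t\,\mathbf{ad}_{\mathbf{U}})^{i}$ at $t=0$ while keeping the non-commuting word order then yields the sum. Your product-rule and induction argument for the recursive form, with the convention $\mathbf{P}_0=\mathbf{I}$, spells out a step that the paper only asserts as following from the definition of $\mathbf{P}_i$.
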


\begin{proof}
The statement follows from the definition of (\ref{Pi}), noting that $(%
\mathrm{D}_{\mathbf{X}}\mathbf{ad}_{\mathbf{X}})%
\hspace{-0.5ex}%
\left( \mathbf{U}\right) =\mathbf{ad}_{\mathbf{U}}$, and the
non-commutativity of products of the form $\mathbf{ad}_{\mathbf{X}}^{k}%
\mathbf{ad}_{\mathbf{U}}^{l}$:%
\begin{align}
\left( \mathrm{D}_{\mathbf{X}}\mathbf{P}_{i}\right) 
\hspace{-0.5ex}%
\left( \mathbf{U}\right) =& (\mathrm{D}_{\mathbf{X}}\mathbf{ad}_{\mathbf{X}%
}^{i})%
\hspace{-0.5ex}%
\left( \mathbf{U}\right) =\frac{d}{dt}\left. \left( \mathbf{ad}_{\mathbf{X}%
}+t\mathbf{ad}_{\mathbf{U}}\right) ^{i}\right\vert _{t=0}  \notag \\
=& \left( \mathbf{ad}_{\mathbf{X}}+t\mathbf{ad}_{\mathbf{U}}\right) ^{i-1}%
\mathbf{ad}_{\mathbf{U}}+\left( \mathbf{ad}_{\mathbf{X}}+t\mathbf{ad}_{%
\mathbf{U}}\right) ^{i-2}\mathbf{ad}_{\mathbf{U}}\left( \mathbf{ad}_{\mathbf{%
X}}+t\mathbf{ad}_{\mathbf{U}}\right) +\ldots  \notag \\
& +\left. \left( \mathbf{ad}_{\mathbf{X}}+t\mathbf{ad}_{\mathbf{U}}\right) 
\mathbf{ad}_{\mathbf{U}}^{i-1}\right\vert _{t=0}=\sum_{j=0}^{i-1}\mathbf{P}%
_{j}\left( \mathbf{X}\right) \mathbf{ad}_{\mathbf{U}}\mathbf{P}%
_{i-j-1}\left( \mathbf{X}\right) .  \label{P}
\end{align}
\end{proof}

In particular, the following derivatives for $i=1,\ldots ,4$ will be needed%
\begin{eqnarray}
\left( \mathrm{D}_{\mathbf{X}}\mathbf{P}_{1}\right) 
\hspace{-0.5ex}%
\left( \mathbf{U}\right) &=&\mathbf{ad}_{\mathbf{U}},\ \ \left( \mathrm{D}_{%
\mathbf{X}}\mathbf{P}_{2}\right) 
\hspace{-0.5ex}%
\left( \mathbf{U}\right) =\mathbf{ad}_{\mathbf{X}}\mathbf{ad}_{\mathbf{U}}+%
\mathbf{ad}_{\mathbf{X}}\mathbf{ad}_{\mathbf{U}}  \notag \\
\left( \mathrm{D}_{\mathbf{X}}\mathbf{P}_{3}\right) 
\hspace{-0.5ex}%
\left( \mathbf{U}\right) &=&\mathbf{ad}_{\mathbf{X}}^{2}\mathbf{ad}_{\mathbf{%
U}}+\mathbf{ad}_{\mathbf{X}}\mathbf{ad}_{\mathbf{U}}\mathbf{ad}_{\mathbf{X}}+%
\mathbf{ad}_{\mathbf{X}}\mathbf{ad}_{\mathbf{U}}^{2}  \label{dPi} \\
\left( \mathrm{D}_{\mathbf{X}}\mathbf{P}_{4}\right) 
\hspace{-0.5ex}%
\left( \mathbf{U}\right) &=&\mathbf{ad}_{\mathbf{X}}^{3}\mathbf{ad}_{\mathbf{%
U}}+\mathbf{ad}_{\mathbf{X}}\mathbf{ad}_{\mathbf{U}}\mathbf{ad}_{\mathbf{X}%
}^{2}+\mathbf{ad}_{\mathbf{X}}^{2}\mathbf{ad}_{\mathbf{U}}\mathbf{ad}_{%
\mathbf{X}}+\mathbf{ad}_{\mathbf{X}}\mathbf{ad}_{\mathbf{U}}^{3}.  \notag
\end{eqnarray}

Also the directional derivatives of $\varphi =\left\Vert \mathbf{x}%
\right\Vert $ and of the parameters (\ref{alpha}) will be needed throughout
the paper. They are readily available in closed form \cite{RSPA2021}%
\begin{eqnarray}
\left( \mathrm{D}_{\mathbf{x}}\left\Vert \mathbf{x}\right\Vert \right)
\left( \mathbf{u}\right) &=&\tfrac{\mathbf{x}^{T}\mathbf{u}}{\varphi },(%
\mathrm{D}_{\mathbf{x}}\alpha )%
\hspace{-0.5ex}%
\left( \mathbf{u}\right) =\left( \delta -\tfrac{1}{2}\beta \right) \mathbf{x}%
^{T}\mathbf{u},(\mathrm{D}_{\mathbf{x}}\beta )%
\hspace{-0.5ex}%
\left( \mathbf{u}\right) =\tfrac{2\mathbf{x}^{T}\mathbf{u}}{\varphi ^{2}}%
\left( \alpha -\beta \right)  \notag \\
(\mathrm{D}_{\mathbf{x}}\delta )%
\hspace{-0.5ex}%
\left( \mathbf{u}\right) &=&\tfrac{\mathbf{x}^{T}\mathbf{u}}{\varphi ^{2}}%
\left( \tfrac{1}{2}\beta -3\delta \right) ,(\mathrm{D}_{\mathbf{x}}\gamma )%
\hspace{-0.5ex}%
\left( \mathbf{u}\right) =\left( \tfrac{\gamma ^{2}-\gamma }{\varphi ^{2}}-%
\tfrac{1}{4}\right) (\mathbf{x}^{T}\mathbf{u}).  \label{DD}
\end{eqnarray}

\subsection{First Directional Derivatives}

The directional derivatives of the matrices $\mathbf{dexp}$ and $\mathbf{dexp%
}^{-1}$ are derived from (\ref{dexpSE3}) and (\ref{dexpInvSE3}),
respectively. Using (\ref{dPi}) yields the following intermediate results

\begin{eqnarray}
\left( \mathrm{D}_{\mathbf{X}}\mathbf{dexp}\right) \left( \mathbf{U}\right)
&=&\sum_{i=1}^{4}a_{i}\left( \mathbf{X}\right) \left( \mathrm{D}_{\mathbf{X}}%
\mathbf{P}_{i}\right) \left( \mathbf{U}\right) +\sum_{i=1}^{4}\left( \mathrm{%
D}_{\mathbf{X}}a_{i}\right) \left( \mathbf{U}\right) \mathbf{P}_{i}\left( 
\mathbf{X}\right)  \notag \\
&=&a_{1}\mathbf{ad}_{\mathbf{U}}+a_{2}\left( \mathbf{ad}_{\mathbf{X}}\mathbf{%
ad}_{\mathbf{U}}+\mathbf{ad}_{\mathbf{U}}\mathbf{ad}_{\mathbf{X}}\right)
+a_{3}(\mathbf{ad}_{\mathbf{X}}^{2}\mathbf{ad}_{\mathbf{U}}+\mathbf{ad}_{%
\mathbf{U}}\mathbf{ad}_{\mathbf{X}}^{2}+\mathbf{ad}_{\mathbf{X}}\mathbf{ad}_{%
\mathbf{U}}\mathbf{ad}_{\mathbf{X}})  \notag \\
&&+a_{4}(\mathbf{ad}_{\mathbf{X}}^{3}\mathbf{ad}_{\mathbf{U}}+\mathbf{ad}_{%
\mathbf{X}}^{2}\mathbf{ad}_{\mathbf{U}}\mathbf{ad}_{\mathbf{X}}+\mathbf{ad}_{%
\mathbf{X}}\mathbf{ad}_{\mathbf{U}}\mathbf{ad}_{\mathbf{X}}^{2}+\mathbf{ad}_{%
\mathbf{U}}\mathbf{ad}_{\mathbf{X}}^{3})+\sum_{i=1}^{4}\left( \mathrm{D}_{%
\mathbf{X}}a_{i}\right) \left( \mathbf{U}\right) \mathbf{P}_{i}\left( 
\mathbf{X}\right)  \label{DdexpSE3} \\
\left( \mathrm{D}_{\mathbf{X}}\mathbf{dexp}^{-1}\right) \left( \mathbf{U}%
\right) &=&\sum_{i=1}^{4}b_{i}\left( \mathbf{X}\right) \left( \mathrm{D}_{%
\mathbf{X}}\mathbf{P}_{i}\right) \left( \mathbf{U}\right)
+\sum_{i=1}^{4}\left( \mathrm{D}_{\mathbf{X}}b_{i}\right) \left( \mathbf{U}%
\right) \mathbf{P}_{i}\left( \mathbf{X}\right)  \notag \\
&=&b_{1}\mathbf{ad}_{\mathbf{U}}+b_{2}\left( \mathbf{ad}_{\mathbf{X}}\mathbf{%
ad}_{\mathbf{U}}+\mathbf{ad}_{\mathbf{U}}\mathbf{ad}_{\mathbf{X}}\right) 
\notag \\
&&+b_{4}(\mathbf{ad}_{\mathbf{X}}^{3}\mathbf{ad}_{\mathbf{U}}+\mathbf{ad}_{%
\mathbf{X}}^{2}\mathbf{ad}_{\mathbf{U}}\mathbf{ad}_{\mathbf{X}}+\mathbf{ad}_{%
\mathbf{X}}\mathbf{ad}_{\mathbf{U}}\mathbf{ad}_{\mathbf{X}}^{2}+\mathbf{ad}_{%
\mathbf{U}}\mathbf{ad}_{\mathbf{X}}^{3})+\sum_{i=1}^{4}\left( \mathrm{D}_{%
\mathbf{X}}b_{i}\right) \left( \mathbf{U}\right) \mathbf{P}_{i}\left( 
\mathbf{X}\right) .  \label{DdexpInvSE3}
\end{eqnarray}

\begin{lemma}
The non-zero directional derivatives of the coefficients (\ref{ab}) are%
\begin{equation}
\left( \mathrm{D}_{\mathbf{X}}a_{i}\right) 
\hspace{-0.5ex}%
\left( \mathbf{U}\right) =\left( \mathbf{x}^{T}\mathbf{u}\right) \bar{a}%
_{i}\left( \mathbf{x}\right) ,\ \ \ \left( \mathrm{D}_{\mathbf{X}%
}b_{i}\right) 
\hspace{-0.5ex}%
\left( \mathbf{U}\right) =\left( \mathbf{x}^{T}\mathbf{u}\right) \bar{b}%
_{i}\left( \mathbf{x}\right)  \label{Da}
\end{equation}%
where $\bar{a}_{i},\bar{b}_{i}$ are functions of $\mathbf{x}$ only, with
limits for $\mathbf{x}\rightarrow \mathbf{0}$, as follows%
\begin{eqnarray}
\bar{a}_{1}\left( \mathbf{x}\right) &=&\tfrac{1}{4}\beta +\tfrac{1}{\varphi
^{2}}\left( \tfrac{5}{2}\alpha -2\beta -\tfrac{1}{2}\right) ,\ \ \bar{a}%
_{1}\left( \mathbf{0}\right) =0  \label{a1} \\
\bar{a}_{2}\left( \mathbf{x}\right) &=&\tfrac{1}{\varphi ^{2}}\tfrac{7}{4}%
\beta -\tfrac{1}{2}\alpha -\tfrac{15}{2}\delta =\tfrac{1}{\varphi ^{2}}%
\left( \tfrac{7}{4}\beta -\tfrac{1}{2}\alpha \right) +\tfrac{1}{\varphi ^{4}}%
\tfrac{15}{2}\left( \alpha -1\right) ,\ \ \bar{a}_{2}\left( \mathbf{0}%
\right) =0  \notag \\
\bar{a}_{3}\left( \mathbf{x}\right) &=&\tfrac{1}{\varphi ^{2}}\bar{a}%
_{1}\left( \mathbf{x}\right) ,\ \ \bar{a}_{3}\left( \mathbf{0}\right) =-%
\frac{1}{180}  \notag \\
\bar{a}_{4}\left( \mathbf{x}\right) &=&\tfrac{1}{\varphi ^{2}}\bar{a}%
_{2}\left( \mathbf{x}\right) ,\ \ \bar{a}_{4}\left( \mathbf{0}\right) =-%
\frac{1}{1260}  \notag \\
\bar{b}_{2}\left( \mathbf{x}\right) &=&\tfrac{1}{\varphi ^{2}}\left( \tfrac{3%
}{4}-\tfrac{3}{2}\tfrac{\delta }{\beta }\right) +\tfrac{1}{\varphi ^{4}}%
\left( \tfrac{\alpha +3\alpha ^{2}}{\beta ^{2}}-4\right) =\tfrac{1}{\varphi
^{2}}\tfrac{3}{4}+\tfrac{1}{\varphi ^{4}}\left( \gamma \left( \tfrac{3}{2}+%
\tfrac{1}{\beta }+3\gamma \right) -\tfrac{3}{2\beta }-4\right) ,\ \ \bar{b}%
_{2}\left( \mathbf{0}\right) =0  \label{b2} \\
\bar{b}_{4}\left( \mathbf{x}\right) &=&\tfrac{1}{\varphi ^{4}}\left( \tfrac{1%
}{4}-\tfrac{1}{2}\tfrac{\delta }{\beta }\right) +\tfrac{1}{\varphi ^{6}}%
\left( \left( 1+\alpha \right) \left( \tfrac{\alpha }{\beta ^{2}}+\tfrac{1}{%
\beta }\right) -4\right) =\tfrac{1}{4\varphi ^{4}}+\tfrac{1}{\varphi ^{6}}%
\left( \tfrac{1}{2\beta }+\tfrac{3}{2}\gamma +\tfrac{\gamma }{\beta }+\gamma
^{2}-4\right) ,\ \ \bar{b}_{4}\left( \mathbf{0}\right) =-\frac{1}{7560}. 
\notag
\end{eqnarray}
\end{lemma}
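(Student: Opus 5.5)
The plan is to exploit that all coefficients in (\ref{ab}) depend on $\mathbf{X}$ only through $\varphi=\left\Vert \mathbf{x}\right\Vert$. Each of them is a rational combination of $\alpha,\beta,\delta$ and powers of $\varphi^{-2}$. By the chain rule together with the derivatives (\ref{DD}), every directional derivative is then proportional to $\mathbf{x}^{T}\mathbf{u}$ and independent of $\mathbf{v}$. The factor is $\bar a_i=\tfrac{1}{\varphi}\tfrac{da_i}{d\varphi}$, and similarly $\bar b_i$. Since $b_1=-\tfrac12$ is constant, its derivative vanishes, which is why only $\bar b_2,\bar b_4$ appear. This proves the structural claim (\ref{Da}). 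What remains is to compute the explicit $\bar a_i,\bar b_i$ and their limits.

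\textbf{Computing $\bar a_1$ and $\bar a_3$.} For $a_1=\beta-\tfrac{\alpha}{2}$, (\ref{DD}) gives
\[
\bar a_1=\tfrac{2}{\varphi^2}(\alpha-\beta)-\tfrac12\left(\delta-\tfrac12\beta\right).
\]
Substituting $\delta=(1-\alpha)/\varphi^2$ yields the stated form of $\bar a_1$. For $a_3=\tfrac{1}{2\varphi^2}(\beta-\alpha)$, I will apply the product rule, using $(\mathrm{D}_{\mathbf{x}}\varphi^{-2})(\mathbf{u})=-2\varphi^{-4}\mathbf{x}^{T}\mathbf{u}$. Collecting terms gives
\[
\tfrac{1}{\varphi^2}\left[\tfrac{\beta}{4}+\tfrac{1}{\varphi^2}\left(\tfrac52\alpha-2\beta-\tfrac12\right)\right],
\]
which is the relation $\bar a_3=\bar a_1/\varphi^2$.

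\textbf{Computing $\bar a_2$ and $\bar a_4$.} The same procedure applies to $a_2=\tfrac52\delta-\tfrac{\beta}{4}$ and $a_4=\tfrac{1}{2\varphi^2}\left(3\delta-\tfrac{\beta}{2}\right)$. Here one uses $\mathrm{D}\delta=\tfrac{\mathbf{x}^{T}\mathbf{u}}{\varphi^2}\left(\tfrac12\beta-3\delta\right)$. After the term from differentiating $\varphi^{-2}$ cancels against part of the bracket, this should give the $\bar a_2$ expression and $\bar a_4=\bar a_2/\varphi^2$.

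\textbf{Computing $\bar b_2$ and $\bar b_4$.} For $b_2$ and $b_4$ I will differentiate the quotients $(1+3\alpha)/\beta$ and $(1+\alpha)/\beta$ using the quotient rule with $\mathrm{D}\alpha$ and $\mathrm{D}\beta$ from (\ref{DD}), and add the contributions from $\varphi^{-2}$ and $\varphi^{-4}$. The second forms given in (\ref{b2}) then follow by rewriting with $\gamma=\alpha/\beta$ and $\delta=(1-\alpha)/\varphi^2$. Alternatively, $\mathrm{D}\gamma$ from (\ref{DD}) can be used directly.

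\textbf{Limits.} The limit values come from inserting the Taylor series
\[
\alpha=1-\tfrac{\varphi^2}{6}+\tfrac{\varphi^4}{120}-\tfrac{\varphi^6}{5040}+\tfrac{\varphi^8}{362880}-\dots,\qquad
\beta=1-\tfrac{\varphi^2}{12}+\tfrac{\varphi^4}{360}-\tfrac{\varphi^6}{20160}+\dots,
\]
together with $1/\beta$ expanded as a series in $\varphi^2$.

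The main obstacle is this last step. The expressions contain $\varphi^{-4}$ and $\varphi^{-6}$ multiplied by combinations that must cancel to high order. For $\bar b_4$ the expansion must reach order $\varphi^{8}$, including the series of $1/\beta$ and $\gamma$, before the constant $-\tfrac{1}{7560}$ appears. For $\bar a_3$ and $\bar a_4$ one needs order $\varphi^4$ and $\varphi^6$ respectively, giving $-\tfrac1{180}$ and $-\tfrac1{1260}$.

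To keep this manageable, I will first expand $a_i$ and $b_i$ themselves as power series in $\varphi^2$, i.e.\ $a_i=\sum_k c_{ik}\varphi^{2k}$. Then $\bar a_i=\tfrac1\varphi\tfrac{d a_i}{d\varphi}=\sum_k 2k\,c_{ik}\varphi^{2k-2}$, so each limit is simply $2c_{i1}$. This reduces the task to a single coefficient per function, which should confirm the stated values.
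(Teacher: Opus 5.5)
Your proposal is correct and follows essentially the paper's own one-line argument: apply the chain rule with the scalar derivatives (\ref{DD}) to each coefficient in (\ref{ab}) and simplify; your reduction $\bar a_i(\mathbf{0})=2c_{i1}$ (likewise for $\bar b_i$) is a clean way to confirm the limit constants, which the paper only states. One caution: keep to your quotient-rule route via $\mathrm{D}\alpha$ and $\mathrm{D}\beta$, because the $\mathrm{D}\gamma$ printed in (\ref{DD}) has the sign of the $(\gamma^{2}-\gamma)/\varphi^{2}$ term reversed (the correct factor is $\frac{\gamma-\gamma^{2}}{\varphi^{2}}-\frac{1}{4}$, which tends to $-\frac{1}{6}$ as $\varphi\rightarrow 0$), so the ``alternative'' you mention would not reproduce the stated $\bar b_2,\bar b_4$.
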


\begin{proof}
The result follows with the derivatives in (\ref{DD}) after algebraic
manipulation.
\end{proof}

Important for a robust implementation are the limits for $\mathbf{x}%
\rightarrow \mathbf{0}$. They are found as%
\begin{equation}
\left( \mathrm{D}_{\mathbf{0}}\mathbf{dexp}\right) \left( \mathbf{U}\right) =%
\frac{1}{2}\mathbf{ad}_{\mathbf{U}},\ \ \ \left( \mathrm{D}_{\mathbf{0}}%
\mathbf{dexp}^{-1}\right) \left( \mathbf{U}\right) =-\frac{1}{2}\mathbf{ad}_{%
\mathbf{U}}.
\end{equation}

\begin{remark}
The divisions by powers of $\varphi $, in the above expressions, may lead to
numerical issues for $\varphi \rightarrow 0$. This is mitigated by using the
normalized vectors $\mathbf{n}=\mathbf{x/}\varphi $ and $\mathbf{N}=\mathbf{%
X/}\varphi $. The expressions (\ref{DdexpSE3}) and (\ref{DdexpInvSE3}) are
replaced by their final form 
\begin{eqnarray}
\left( \mathrm{D}_{\mathbf{X}}\mathbf{dexp}\right) \left( \mathbf{U}\right)
&=&\left( \beta -\tfrac{\alpha }{2}\right) \mathbf{ad}_{\mathbf{U}}-\tfrac{%
\beta }{4}\left( \mathbf{ad}_{\mathbf{X}}\mathbf{ad}_{\mathbf{U}}+\mathbf{ad}%
_{\mathbf{U}}\mathbf{ad}_{\mathbf{X}}\right) +\tfrac{1}{\varphi }\left( 
\tfrac{5}{2}-\tfrac{5}{2}\alpha \right) \left( \mathbf{ad}_{\mathbf{N}}%
\mathbf{ad}_{\mathbf{U}}+\mathbf{ad}_{\mathbf{U}}\mathbf{ad}_{\mathbf{N}%
}\right)  \notag \\
&&+\tfrac{1}{2}\left( \beta -\alpha \right) (\mathbf{ad}_{\mathbf{N}}^{2}%
\mathbf{ad}_{\mathbf{U}}+\mathbf{ad}_{\mathbf{N}}\mathbf{ad}_{\mathbf{U}}%
\mathbf{ad}_{\mathbf{N}}+\mathbf{ad}_{\mathbf{U}}\mathbf{ad}_{\mathbf{N}%
}^{2})-\tfrac{\beta }{4}\mathbf{ad}_{\mathbf{X}}(\mathbf{ad}_{\mathbf{N}}^{2}%
\mathbf{ad}_{\mathbf{U}}+\mathbf{ad}_{\mathbf{U}}\mathbf{ad}_{\mathbf{N}%
}^{2})-\tfrac{\beta }{4}(\mathbf{ad}_{\mathbf{N}}^{2}\mathbf{ad}_{\mathbf{U}%
}+\mathbf{ad}_{\mathbf{U}}\mathbf{ad}_{\mathbf{N}}^{2})\mathbf{ad}_{\mathbf{X%
}}  \notag \\
&&+\tfrac{1}{\varphi }\left( \tfrac{3}{2}-\tfrac{3}{2}\alpha \right) (%
\mathbf{ad}_{\mathbf{N}}^{3}\mathbf{ad}_{\mathbf{U}}+\mathbf{ad}_{\mathbf{N}%
}^{2}\mathbf{ad}_{\mathbf{U}}\mathbf{ad}_{\mathbf{N}}+\mathbf{ad}_{\mathbf{N}%
}\mathbf{ad}_{\mathbf{U}}\mathbf{ad}_{\mathbf{N}}^{2}+\mathbf{ad}_{\mathbf{U}%
}\mathbf{ad}_{\mathbf{N}}^{3})+\tfrac{\beta }{4}(\mathbf{x}^{T}\mathbf{u})%
\mathbf{ad}_{\mathbf{X}}+\left( \tfrac{5}{2}\alpha -2\beta -\tfrac{1}{2}%
\right) (\mathbf{n}^{T}\mathbf{u})\mathbf{ad}_{\mathbf{N}}  \notag \\
&&+\left[ \left( \tfrac{7}{4}\beta -\tfrac{1}{2}\alpha \right) (\mathbf{x}%
^{T}\mathbf{u})+\tfrac{1}{\varphi }\tfrac{15}{2}\left( \alpha -1\right)
\left( \mathbf{n}^{T}\mathbf{u}\right) \right] \mathbf{ad}_{\mathbf{N}}^{2}+%
\tfrac{\beta }{4}(\mathbf{n}^{T}\mathbf{u})\mathbf{ad}_{\mathbf{X}}^{2}%
\mathbf{ad}_{\mathbf{N}}  \notag \\
&&+\left( \tfrac{5}{2}\alpha -2\beta -\tfrac{1}{2}\right) (\mathbf{n}^{T}%
\mathbf{u})\mathbf{ad}_{\mathbf{N}}^{3}+\left[ \left( \tfrac{7}{4}\beta -%
\tfrac{1}{2}\alpha \right) (\mathbf{x}^{T}\mathbf{u})+\tfrac{1}{\varphi }%
\tfrac{15}{2}\left( \alpha -1\right) \left( \mathbf{n}^{T}\mathbf{u}\right) %
\right] \mathbf{ad}_{\mathbf{N}}^{4}  \label{dexpAlt}
\end{eqnarray}%
\begin{eqnarray}
\left( \mathrm{D}_{\mathbf{X}}\mathbf{dexp}^{-1}\right) \left( \mathbf{U}%
\right) &=&-\tfrac{1}{2}\mathbf{ad}_{\mathbf{U}}+\tfrac{1}{\varphi }\left( 2-%
\tfrac{1+3\alpha }{2\beta }\right) \left( \mathbf{ad}_{\mathbf{N}}\mathbf{ad}%
_{\mathbf{U}}+\mathbf{ad}_{\mathbf{U}}\mathbf{ad}_{\mathbf{N}}\right)  \notag
\\
&&+\tfrac{1}{\varphi }\left( 1-\tfrac{1+\alpha }{2\beta }\right) (\mathbf{ad}%
_{\mathbf{N}}^{3}\mathbf{ad}_{\mathbf{U}}+\mathbf{ad}_{\mathbf{N}}^{2}%
\mathbf{ad}_{\mathbf{U}}\mathbf{ad}_{\mathbf{N}}+\mathbf{ad}_{\mathbf{N}}%
\mathbf{ad}_{\mathbf{U}}\mathbf{ad}_{\mathbf{N}}^{2}+\mathbf{ad}_{\mathbf{U}}%
\mathbf{ad}_{\mathbf{N}}^{3})  \notag \\
&&+\left( \tfrac{3}{4}\mathbf{x}^{T}\mathbf{u}+\tfrac{1}{\varphi }\left(
\gamma \left( \tfrac{3}{2}+\tfrac{1}{\beta }+3\gamma \right) -\tfrac{3}{%
2\beta }-4\right) \mathbf{n}^{T}\mathbf{u}\right) \mathbf{ad}_{\mathbf{N}%
}^{2}  \label{dexpInvAlt} \\
&&+\left( \tfrac{1}{4}\mathbf{x}^{T}\mathbf{u}+\tfrac{1}{\varphi }\left( 
\tfrac{1}{2\beta }+\tfrac{3}{2}\gamma +\tfrac{\gamma }{\beta }+\gamma
^{2}-4\right) \mathbf{n}^{T}\mathbf{u}\right) \mathbf{ad}_{\mathbf{N}}^{4}. 
\notag
\end{eqnarray}
\end{remark}

\subsection{Jacobian of the Evaluation Map of \textbf{dexp}}

In many applications, the partial derivatives w.r.t. the canonical
coordinates $\mathbf{X}$ of the vector defined by the expression $f\left( 
\mathbf{X}\right) =\mathbf{dexp}_{\mathbf{X}}\mathbf{Z}$ are needed, for
given $\mathbf{Z}\in {\mathbb{R}}^{3}\times {\mathbb{R}}^{3}$. This is the
derivative of the dexp map when applied to $\mathbf{Z}$, i.e. evaluated with 
$\mathbf{Z}$. Therefore, $f\left( \mathbf{X}\right) $ will be referred to as
the \emph{evaluation map}. With slight abuse of language, the term $\frac{%
\partial f}{\partial \mathbf{X}}$ will be referred to as the Jacobian.

\begin{lemma}
The Jacobian of $f\left( \mathbf{X}\right) =\mathbf{dexp}_{\mathbf{X}}%
\mathbf{Z}$ and $f^{\mathrm{inv}}\left( \mathbf{X}\right) =\mathbf{dexp}_{%
\mathbf{X}}^{-1}\mathbf{Z}$ admit the following closed form expressions,
abbreviating again $\mathbf{Z}_{i}:=\mathbf{ad}_{\mathbf{X}}^{i}\mathbf{Z}$, 
\begin{eqnarray}
\frac{\partial f}{\partial \mathbf{X}} &=&-a_{1}\mathbf{ad}_{\mathbf{Z}%
}+a_{2}\left( \mathbf{ad}_{\mathbf{Z}}\mathbf{ad}_{\mathbf{X}}-2\mathbf{ad}_{%
\mathbf{X}}\mathbf{ad}_{\mathbf{Z}}\right) -a_{3}\left( \mathbf{ad}_{\mathbf{%
X}}^{2}\mathbf{ad}_{\mathbf{Z}}+\mathbf{ad}_{\mathbf{X}}\mathbf{ad}_{\mathbf{%
Z}_{1}}+\mathbf{ad}_{\mathbf{Z}_{2}}\right)  \notag \\
&&-a_{4}\left( \mathbf{ad}_{\mathbf{X}}^{3}\mathbf{ad}_{\mathbf{Z}}+\mathbf{%
ad}_{\mathbf{X}}\mathbf{ad}_{\mathbf{Z}_{2}}+\mathbf{ad}_{\mathbf{x}}^{2}%
\mathbf{ad}_{\mathbf{Z}_{1}}+\mathbf{ad}_{\mathbf{Z}_{3}}\right)
+\sum_{i=1}^{4}\bar{a}_{i}\mathbf{Z}_{i}\cdot 
\begin{bmatrix}
\mathbf{x}^{T} & \mathbf{0}%
\end{bmatrix}
\label{PartialDexp} \\
\frac{\partial f^{\mathrm{inv}}}{\partial \mathbf{X}} &=&-b_{1}\mathbf{ad}_{%
\mathbf{Z}}+b_{2}\left( \mathbf{ad}_{\mathbf{Z}}\mathbf{ad}_{\mathbf{X}}-2%
\mathbf{ad}_{\mathbf{X}}\mathbf{ad}_{\mathbf{Z}}\right)  \notag \\
&&-b_{4}\left( \mathbf{ad}_{\mathbf{X}}^{3}\mathbf{ad}_{\mathbf{Z}}+\mathbf{%
ad}_{\mathbf{X}}\mathbf{ad}_{\mathbf{Z}_{2}}+\mathbf{ad}_{\mathbf{x}}^{2}%
\mathbf{ad}_{\mathbf{Z}_{1}}+\mathbf{ad}_{\mathbf{Z}_{3}}\right)
+\sum_{i=1}^{4}\bar{b}_{i}\mathbf{Z}_{i}\cdot 
\begin{bmatrix}
\mathbf{x}^{T} & \mathbf{0}%
\end{bmatrix}%
\end{eqnarray}%
The limits for $\mathbf{x}\rightarrow \mathbf{0}$ are $\frac{\partial f}{%
\partial \mathbf{X}}\left( \mathbf{0}\right) =-\frac{1}{2}\mathbf{ad}_{%
\mathbf{Z}}$ and $\frac{\partial f^{\mathrm{inv}}}{\partial \mathbf{X}}%
\left( \mathbf{0}\right) =\frac{1}{2}\mathbf{ad}_{\mathbf{Z}}$.
\end{lemma}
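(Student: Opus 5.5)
The plan is to compute the Jacobian columnwise as a directional derivative. For any $\mathbf{U}=(\mathbf{u},\mathbf{v})$ we have $\frac{\partial f}{\partial \mathbf{X}}\mathbf{U}=(\mathrm{D}_{\mathbf{X}}\mathbf{dexp})(\mathbf{U})\mathbf{Z}$. This quantity is given by the intermediate result (\ref{DdexpSE3}) applied to $\mathbf{Z}$. The task is therefore to rewrite every term of (\ref{DdexpSE3})$\,\mathbf{Z}$ as a fixed $6\times 6$ matrix, depending only on $\mathbf{X}$ and $\mathbf{Z}$, acting on $\mathbf{U}$. The single tool is the antisymmetry of the Lie bracket in vector form, $\mathbf{ad}_{\mathbf{U}}\mathbf{W}=-\mathbf{ad}_{\mathbf{W}}\mathbf{U}$, valid for all $\mathbf{W}\in\mathbb{R}^6$. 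The inverse map $f^{\mathrm{inv}}$ is handled identically from (\ref{DdexpInvSE3}) with $a_i\to b_i$ and $a_3$ absent.

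I would treat the terms term by term, in the following order.

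The $a_1$ term gives $a_1\mathbf{ad}_{\mathbf{U}}\mathbf{Z}=-a_1\mathbf{ad}_{\mathbf{Z}}\mathbf{U}$.

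For the $a_2$ term, $\mathbf{ad}_{\mathbf{X}}\mathbf{ad}_{\mathbf{U}}\mathbf{Z}=-\mathbf{ad}_{\mathbf{X}}\mathbf{ad}_{\mathbf{Z}}\mathbf{U}$ and $\mathbf{ad}_{\mathbf{U}}\mathbf{ad}_{\mathbf{X}}\mathbf{Z}=-\mathbf{ad}_{\mathbf{Z}_1}\mathbf{U}$. To reach the stated form $\mathbf{ad}_{\mathbf{Z}}\mathbf{ad}_{\mathbf{X}}-2\mathbf{ad}_{\mathbf{X}}\mathbf{ad}_{\mathbf{Z}}$, I would use the Jacobi identity in the form $\mathbf{ad}_{[\mathbf{X},\mathbf{Z}]}=\mathbf{ad}_{\mathbf{X}}\mathbf{ad}_{\mathbf{Z}}-\mathbf{ad}_{\mathbf{Z}}\mathbf{ad}_{\mathbf{X}}$, i.e. $\mathbf{ad}_{\mathbf{Z}_1}=\mathbf{ad}_{\mathbf{X}}\mathbf{ad}_{\mathbf{Z}}-\mathbf{ad}_{\mathbf{Z}}\mathbf{ad}_{\mathbf{X}}$.

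For the $a_3$ and $a_4$ terms, each word $\mathbf{ad}_{\mathbf{X}}^k\mathbf{ad}_{\mathbf{U}}\mathbf{ad}_{\mathbf{X}}^{l}$ applied to $\mathbf{Z}$ becomes $-\mathbf{ad}_{\mathbf{X}}^k\mathbf{ad}_{\mathbf{Z}_l}\mathbf{U}$. This produces exactly the listed combinations $\mathbf{ad}_{\mathbf{X}}^{2}\mathbf{ad}_{\mathbf{Z}}+\mathbf{ad}_{\mathbf{X}}\mathbf{ad}_{\mathbf{Z}_1}+\mathbf{ad}_{\mathbf{Z}_2}$ and its cubic analogue, with no further identities needed.

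The coefficient-derivative part uses (\ref{Da}): $(\mathrm{D}_{\mathbf{X}}a_i)(\mathbf{U})\,\mathbf{P}_i\mathbf{Z}=\bar a_i(\mathbf{x}^T\mathbf{u})\mathbf{Z}_i$. This equals $\bar a_i\mathbf{Z}_i[\mathbf{x}^T\ \mathbf{0}]\mathbf{U}$, a rank-one outer product. The $\mathbf{0}$ block reflects that the $a_i$ depend only on $\varphi=\|\mathbf{x}\|$.

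The main obstacle I expect is bookkeeping rather than concept, concentrated in the $a_2$ term. Matching the exact printed grouping requires the Jacobi identity above. I would also check whether the $\mathbf{ad}_{\mathbf{x}}^2$ appearing in the $a_4$ bracket is a typo for $\mathbf{ad}_{\mathbf{X}}^2$; the derivation produces $\mathbf{ad}_{\mathbf{X}}^2$.

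For the limits as $\mathbf{x}\to\mathbf{0}$, I would use $a_1\to\frac12$, $b_1=-\frac12$, and boundedness of $a_2,a_3,a_4,b_2,b_4$ and of $\bar a_i,\bar b_i$ (all analytic in $\varphi^2$, with finite limits from (\ref{a1}) and (\ref{b2})). Interpreting the limit at $\mathbf{X}=\mathbf{0}$, every term containing $\mathbf{ad}_{\mathbf{X}}$, $\mathbf{Z}_i$ ($i\geq1$), or $\mathbf{x}^T$ vanishes. This leaves $\frac{\partial f}{\partial\mathbf{X}}(\mathbf{0})=-\frac12\mathbf{ad}_{\mathbf{Z}}$ and $\frac{\partial f^{\mathrm{inv}}}{\partial\mathbf{X}}(\mathbf{0})=\frac12\mathbf{ad}_{\mathbf{Z}}$. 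This is consistent with $(\mathrm{D}_{\mathbf{0}}\mathbf{dexp})(\mathbf{U})=\frac12\mathbf{ad}_{\mathbf{U}}$ and its inverse counterpart, combined with antisymmetry.
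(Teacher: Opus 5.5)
Your proposal is correct and follows the paper's proof essentially step by step. You apply (\ref{DdexpSE3}) to $\mathbf{Z}$ and move $\mathbf{U}$ to the right in every word via $\mathbf{ad}_{\mathbf{U}}\mathbf{W}=-\mathbf{ad}_{\mathbf{W}}\mathbf{U}$. You invoke the Jacobi identity only for the $a_2$ term; your operator form $\mathbf{ad}_{\mathbf{Z}_1}=\mathbf{ad}_{\mathbf{X}}\mathbf{ad}_{\mathbf{Z}}-\mathbf{ad}_{\mathbf{Z}}\mathbf{ad}_{\mathbf{X}}$ is equivalent to the paper's triple-bracket manipulation, and it yields the statement's sign $+\mathbf{ad}_{\mathbf{Z}}\mathbf{ad}_{\mathbf{X}}$, whereas the paper's proof text has a sign slip there. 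You then read off the rank-one part from (\ref{Da}).

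Your reading of the limit as the value at $\mathbf{X}=\mathbf{0}$ is also the right one. For $\mathbf{x}\rightarrow\mathbf{0}$ with $\mathbf{y}\neq\mathbf{0}$, the term $\frac{1}{6}(\mathbf{ad}_{\mathbf{Z}}\mathbf{ad}_{\mathbf{X}}-2\mathbf{ad}_{\mathbf{X}}\mathbf{ad}_{\mathbf{Z}})$ would survive.
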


\begin{proof}
Denoting with $U_{k}$ the components of $\mathbf{U}$, the directional
derivative of $\mathbf{dexp}_{\mathbf{X}}\mathbf{Z}$ along $\mathbf{U}$ is $%
\left( \mathrm{D}_{\mathbf{X}}\mathbf{dexp}\cdot \mathbf{Z}\right) (\mathbf{U%
})=\left( \mathrm{D}_{\mathbf{X}}\mathbf{dexp}\right) (\mathbf{U})\cdot 
\mathbf{Z}=\sum_{k}\frac{\partial }{\partial X_{k}}(\mathbf{dexp}_{\mathbf{X}%
}\mathbf{Z)}U_{k}$. The coefficient matrix $\frac{\partial }{\partial 
\mathbf{X}}(\mathbf{dexp}_{\mathbf{X}}\mathbf{Z)}$ is thus obtained when
written explicitly as multiplication with vector $\mathbf{U}$. When (\ref%
{DdexpSE3}) is multiplied with $\mathbf{Z}$, the term multiplied with $a_{1}$
is written $\mathbf{ad}_{\mathbf{U}}\mathbf{Z}=-\mathbf{ad}_{\mathbf{Z}}%
\mathbf{U}$ using the skew symmetry of the Lie bracket. The term with $a_{2}$
is $\mathbf{ad}_{\mathbf{X}}\mathbf{ad}_{\mathbf{U}}\mathbf{Z}+\mathbf{ad}_{%
\mathbf{U}}\mathbf{ad}_{\mathbf{X}}\mathbf{Z}$. Applying the skew symmetry
to the first term yields $\mathbf{ad}_{\mathbf{X}}\mathbf{ad}_{\mathbf{U}}%
\mathbf{Z=-ad}_{\mathbf{X}}\mathbf{ad}_{\mathbf{U}}\mathbf{U}$, and the
Jacobi identity to the second term yields $\mathbf{ad}_{\mathbf{U}}\mathbf{ad%
}_{\mathbf{X}}\mathbf{Z}=-\mathbf{ad}_{\mathbf{X}}\mathbf{ad}_{\mathbf{Z}}%
\mathbf{U}-\mathbf{ad}_{\mathbf{Z}}\mathbf{ad}_{\mathbf{U}}\mathbf{X}$. The
skew symmetry applied to the last terms thus yields $-2\mathbf{ad}_{\mathbf{X%
}}\mathbf{ad}_{\mathbf{Z}}\mathbf{U}-\mathbf{ad}_{\mathbf{Z}}\mathbf{ad}_{%
\mathbf{X}}\mathbf{U}$ for the term multiplied with $a_{2}$. The term
multiplied with $a_{3}$ is transformed using the skew symmetry as $\mathbf{ad%
}_{\mathbf{X}}^{2}\mathbf{ad}_{\mathbf{U}}\mathbf{Z}+\mathbf{ad}_{\mathbf{U}}%
\mathbf{ad}_{\mathbf{X}}^{2}\mathbf{Z}+\mathbf{ad}_{\mathbf{X}}\mathbf{ad}_{%
\mathbf{U}}\mathbf{ad}_{\mathbf{X}}\mathbf{Z}=-\mathbf{ad}_{\mathbf{X}}^{2}%
\mathbf{ad}_{\mathbf{Z}}\mathbf{U}-\mathbf{ad}_{\mathbf{ad}_{\mathbf{X}}^{2}}%
\mathbf{U}-\mathbf{ad}_{\mathbf{X}}\mathbf{ad}_{\mathbf{ad}_{\mathbf{X}}%
\mathbf{Z}}\mathbf{U}$. The term multiplied with $a_{4}$ is treated
analogously. The last term in (\ref{PartialDexp}) follows from (\ref%
{DdexpSE3}) and writing the differential of $a_{i}$, using (\ref{Da}), as $%
\left( \mathrm{D}_{\mathbf{X}}a_{i}\right) \left( \mathbf{U}\right) \mathbf{P%
}_{i}\mathbf{Z}=\bar{a}_{i}\mathbf{P}_{i}\mathbf{Z}\left( \mathbf{x}^{T}%
\mathbf{u}\right) =\bar{a}_{i}\mathbf{P}_{i}\mathbf{Zx}^{T}\mathbf{u}$, with
notion (\ref{Pi}). Since $\bar{a}_{i}$ depend on $\mathbf{x}$ only, the last
three components of the derivatives are zero, hence the vector $%
\begin{bmatrix}
\mathbf{x}^{T} & \mathbf{0}%
\end{bmatrix}%
$. The derivation for the inverse proceeds in the same way.
\end{proof}

\subsection{Jacobian of the Evaluation Map of $\mathbf{dexp}^{T}$}

Consider the map defined as $\bar{f}\left( \mathbf{X}\right) =\mathbf{dexp}_{%
\mathbf{X}}^{T}\mathbf{Z}$, for $\mathbf{Z}\in {\mathbb{R}}^{3}\times {%
\mathbb{R}}^{3}$. The Jacobian $\frac{\partial \bar{f}}{\partial \mathbf{X}}$
of this map is also relevant in different contexts. 
\color[rgb]{0,0,0}%
Introduce the matrix%
\begin{equation}
\overline{\mathbf{ad}}_{\mathbf{U}}:=%
\begin{bmatrix}
\tilde{\mathbf{u}} & \tilde{\mathbf{v}} \\ 
\tilde{\mathbf{v}} & \mathbf{0}%
\end{bmatrix}%
,\ \mathrm{with}\ \mathbf{U}=\left( \mathbf{u},\mathbf{v}\right)
\label{adBar}
\end{equation}%
so that $\mathbf{ad}_{\mathbf{X}}^{T}\mathbf{U}=\overline{\mathbf{ad}}_{%
\mathbf{U}}\mathbf{X}$. This is always possible as the coadjoint action $%
\mathbf{Q}\in se^{\ast }\left( 3\right) \mapsto \mathbf{ad}_{\mathbf{X}}^{T}%
\mathbf{Q}\in se^{\ast }\left( 3\right) $, with $\mathbf{X}\in se\left(
3\right) $, is bilinear in $\mathbf{X}$ and $\mathbf{Q}$. The so defined
matrix is skew symmetric, i.e. $\overline{\mathbf{ad}}_{\mathbf{X}}^{T}=-%
\overline{\mathbf{ad}}_{\mathbf{X}}$.%
\color{black}%

\begin{lemma}
The Jacobian of $\bar{f}\left( \mathbf{X}\right) =\mathbf{dexp}_{\mathbf{X}%
}^{T}\mathbf{Z}$ admits the following closed form expressions, abbreviating
again $\mathbf{Z}_{i}:=\mathbf{ad}_{\mathbf{X}}^{i}\mathbf{Z},\overline{%
\mathbf{Q}}_{i}:=\mathbf{P}_{i}^{T}\mathbf{Q}$, and introducing $\overline{%
\mathbf{Z}}_{i}:=\mathbf{P}_{i}^{T}\mathbf{Z}$,%
\begin{equation}
\frac{\partial \bar{f}}{\partial \mathbf{X}}=a_{1}\overline{\mathbf{ad}}_{%
\mathbf{Z}}+a_{2}\left( \overline{\mathbf{ad}}_{\overline{\mathbf{Z}}_{1}}+%
\mathbf{P}_{1}^{T}\overline{\mathbf{ad}}_{\mathbf{Z}}\right) +a_{3}\left( 
\mathbf{P}_{2}^{T}\overline{\mathbf{ad}}_{\mathbf{Z}}+\mathbf{P}_{1}^{T}%
\overline{\mathbf{ad}}_{\overline{\mathbf{Z}}_{1}}+\overline{\mathbf{ad}}_{%
\overline{\mathbf{Z}}_{2}}\right) +a_{4}\left( \mathbf{P}_{3}^{T}\overline{%
\mathbf{ad}}_{\mathbf{Z}}+\mathbf{P}_{1}^{T}\overline{\mathbf{ad}}_{%
\overline{\mathbf{Z}}_{2}}+\mathbf{P}_{2}^{T}\overline{\mathbf{ad}}_{%
\overline{\mathbf{Z}}_{1}}+\overline{\mathbf{ad}}_{\overline{\mathbf{Z}}%
_{3}}\right) +\sum_{i=1}^{4}\bar{a}_{i}\overline{\mathbf{Z}}_{i}\cdot 
\begin{bmatrix}
\mathbf{x}^{T} & \mathbf{0}%
\end{bmatrix}%
\end{equation}%
The limits for $\mathbf{x}\rightarrow \mathbf{0}$ is $\frac{\partial \bar{f}%
}{\partial \mathbf{X}}\left( \mathbf{0}\right) =\frac{1}{2}\overline{\mathbf{%
ad}}_{\mathbf{Z}}$.
\end{lemma}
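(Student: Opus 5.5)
The Jacobian can be obtained the same way as for $f$: write the directional derivative of $\bar f$ along $\mathbf{U}$ explicitly as a matrix times $\mathbf{U}$. Since transposition commutes with differentiation,
\begin{equation*}
(\mathrm{D}_{\mathbf{X}}\bar f)(\mathbf{U})=\big((\mathrm{D}_{\mathbf{X}}\mathbf{dexp})(\mathbf{U})\big)^{T}\mathbf{Z}.
\end{equation*}
Into this I insert (\ref{DdexpSE3}) with the explicit forms (\ref{dPi}) of $(\mathrm{D}_{\mathbf{X}}\mathbf{P}_i)(\mathbf{U})$, and treat two kinds of terms separately: those from the coefficients $a_i$ and those from their derivatives $(\mathrm{D}_{\mathbf{X}}a_i)(\mathbf{U})$.

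\emph{Coefficient-derivative part.} By (\ref{Da}) this part gives
\begin{equation*}
\sum_i(\mathbf{x}^T\mathbf{u})\,\bar a_i\,\mathbf{P}_i^T\mathbf{Z}=\sum_i \bar a_i\,\overline{\mathbf{Z}}_i\,[\mathbf{x}^T\ \mathbf{0}]\,\mathbf{U}.
\end{equation*}
The zero block appears because $\bar a_i$ depends on $\mathbf{x}$ only, exactly as in the preceding lemma.

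\emph{Main part.} Each summand of (\ref{dPi}) is a word $\mathbf{P}_j\,\mathbf{ad}_{\mathbf{U}}\,\mathbf{P}_k$. Its transpose applied to $\mathbf{Z}$ is
\begin{equation*}
\mathbf{P}_k^T\,\mathbf{ad}_{\mathbf{U}}^T\,\mathbf{P}_j^T\mathbf{Z}=\mathbf{P}_k^T\,\mathbf{ad}_{\mathbf{U}}^T\,\overline{\mathbf{Z}}_j.
\end{equation*}
The key step is to move $\mathbf{U}$ to the right via the identity $\mathbf{ad}_{\mathbf{U}}^T\mathbf{Q}=\overline{\mathbf{ad}}_{\mathbf{Q}}\mathbf{U}$. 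This follows from the defining identity $\mathbf{ad}_{\mathbf{X}}^T\mathbf{U}=\overline{\mathbf{ad}}_{\mathbf{U}}\mathbf{X}$ in (\ref{adBar}) with the roles renamed. It is a bilinear statement that I would check directly from the block forms (\ref{adX}) and (\ref{adBar}):
\begin{equation*}
\mathbf{ad}_{\mathbf{U}}^T(\mathbf{q},\mathbf{p})=(-\tilde{\mathbf{u}}\mathbf{q}-\tilde{\mathbf{v}}\mathbf{p},\,-\tilde{\mathbf{u}}\mathbf{p})=(\tilde{\mathbf{q}}\mathbf{u}+\tilde{\mathbf{p}}\mathbf{v},\,\tilde{\mathbf{p}}\mathbf{u}).
\end{equation*}
Each word therefore contributes $\mathbf{P}_k^T\,\overline{\mathbf{ad}}_{\overline{\mathbf{Z}}_j}\,\mathbf{U}$.

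Collecting over the words, with $j,k\ge 0$ and $j+k=i-1$, gives for each $i$
\begin{equation*}
\sum_{j+k=i-1}\mathbf{P}_k^T\,\overline{\mathbf{ad}}_{\overline{\mathbf{Z}}_j}.
\end{equation*}
Reading off the cases:
\begin{itemize}
\item $i=1$: $\overline{\mathbf{ad}}_{\mathbf{Z}}$.
\item $i=2$: $\overline{\mathbf{ad}}_{\overline{\mathbf{Z}}_1}+\mathbf{P}_1^T\overline{\mathbf{ad}}_{\mathbf{Z}}$.
\item $i=3$: the three terms stated in the lemma.
\item $i=4$: the four terms stated in the lemma.
\end{itemize}
Because (\ref{dPi}) is transposed in full, one gets exactly all pairs $(j,k)$ with $j+k=i-1$, which matches the statement.

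The one place that needs care is the bookkeeping of the pairs $(j,k)$ under transposition. The typeset (\ref{dPi}) contains misprints (e.g.\ $\mathbf{ad}_{\mathbf{X}}\mathbf{ad}_{\mathbf{U}}^2$), so I would work from the general sum in Lemma~\ref{lemDerPi} instead. No Jacobi identity is needed here, unlike for $f$; only the transposition identity above is used.

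\emph{Limit $\mathbf{x}\to\mathbf{0}$.} Using (\ref{alpha}), $\beta\to 1$ and $\alpha\to 1$, so $a_1=\beta-\tfrac{\alpha}{2}\to\tfrac12$, while all $\mathbf{P}_i\to\mathbf{0}$ for $i\geq 1$. The values $a_2,a_3,a_4$ and $\bar a_i$ stay bounded. Hence every term except the first vanishes, and the Jacobian tends to $\tfrac12\overline{\mathbf{ad}}_{\mathbf{Z}}$.
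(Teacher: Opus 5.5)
Your proposal is correct and follows the paper's own argument: transpose (\ref{DdexpSE3}), move $\mathbf{U}$ to the right with $\mathbf{ad}_{\mathbf{U}}^{T}\mathbf{Q}=\overline{\mathbf{ad}}_{\mathbf{Q}}\mathbf{U}$, and collect the $\bar a_i$ terms into the rank-one part $\sum_i\bar a_i\overline{\mathbf{Z}}_i\left[\begin{smallmatrix}\mathbf{x}^{T} & \mathbf{0}\end{smallmatrix}\right]$. Working from the general sum in Lemma~\ref{lemDerPi} instead of the misprinted (\ref{dPi}), and checking the transposition identity from the block forms, simply spells out what the paper compresses into ``etc.''

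One correction in your limit argument: $\mathbf{P}_i\to\mathbf{0}$ requires $\mathbf{y}\to\mathbf{0}$ as well, not just $\mathbf{x}\to\mathbf{0}$. If $\mathbf{x}\to\mathbf{0}$ with $\mathbf{y}\neq\mathbf{0}$ fixed, then $\mathbf{P}_1\to\left[\begin{smallmatrix}\mathbf{0}&\mathbf{0}\\ \tilde{\mathbf{y}}&\mathbf{0}\end{smallmatrix}\right]$, and the $a_2$ term (with $a_2\to\tfrac16$) does not vanish. Your argument therefore establishes the value at $\mathbf{X}=\mathbf{0}$, which is what $\frac{\partial\bar f}{\partial\mathbf{X}}(\mathbf{0})$ denotes and is the only reading under which the stated limit holds.
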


\begin{proof}
The directional derivative $\left( \mathrm{D}_{\mathbf{X}}\mathbf{dexp}^{T}%
\mathbf{Z}\right) (\mathbf{U})$ is expressed with (\ref{DdexpSE3}) as%
\begin{eqnarray*}
\left( \mathrm{D}_{\mathbf{X}}\mathbf{dexp}^{T}\mathbf{Z}\right) (\mathbf{U}%
) &=&a_{1}\mathbf{ad}_{\mathbf{U}}^{T}\mathbf{Z}+a_{2}\left( \mathbf{P}^{T}%
\mathbf{ad}_{\mathbf{U}}^{T}+\mathbf{ad}_{\mathbf{U}}^{T}\mathbf{P}%
^{T}\right) \mathbf{Z}+a_{3}(\mathbf{P}_{2}^{T}\mathbf{ad}_{\mathbf{U}}^{T}+%
\mathbf{ad}_{\mathbf{U}}^{T}\mathbf{P}_{2}^{T}+\mathbf{P}^{T}\mathbf{ad}_{%
\mathbf{U}}^{T}\mathbf{P}^{T})\mathbf{Z} \\
&&+a_{4}(\mathbf{P}_{3}^{T}\mathbf{ad}_{\mathbf{U}}^{T}+\mathbf{P}_{2}^{T}%
\mathbf{ad}_{\mathbf{U}}^{T}\mathbf{P}^{T}+\mathbf{P}^{T}\mathbf{ad}_{%
\mathbf{U}}^{T}\mathbf{P}_{2}^{T}+\mathbf{ad}_{\mathbf{U}}^{T}\mathbf{P}%
_{3}^{T})\mathbf{Z}+\sum_{i=1}^{4}\left( \mathrm{D}_{\mathbf{X}}a_{i}\right)
\left( \mathbf{U}\right) \mathbf{P}_{i}^{T}\left( \mathbf{X}\right) \mathbf{Z%
}.
\end{eqnarray*}%
Substituting relations $\mathbf{ad}_{\mathbf{U}}^{T}\mathbf{Z}=\overline{%
\mathbf{ad}}_{\mathbf{Z}}\mathbf{U}$ etc. yields $\left( \mathrm{D}_{\mathbf{%
X}}\mathbf{dexp}^{T}\mathbf{Z}\right) (\mathbf{U})=\frac{\partial \bar{f}}{%
\partial \mathbf{X}}\mathbf{U}$, and the statement follows.
\end{proof}

\subsection{Second Directional Derivatives of $\mathbf{dexp}_{\mathbf{X}}$
and $\mathbf{dexp}_{\mathbf{X}}^{-1}$}

With (\ref{dexpSE3}), the second directional derivative of $\mathbf{dexp}_{%
\mathbf{X}}$ along $\mathbf{U}=\left( \mathbf{u},\mathbf{v}\right) $ and $%
\mathbf{S}=\left( \mathbf{s},\mathbf{r}\right) $ is 
\begin{eqnarray}
\left( \mathrm{D}_{\mathbf{X}}^{2}\mathbf{dexp}\right) \left( \mathbf{U}%
\right) \left( \mathbf{S}\right) &=&\sum_{i=1}^{4}\left( \mathrm{D}_{\mathbf{%
X}}^{2}a_{i}\right) \left( \mathbf{U}\right) \left( \mathbf{S}\right) 
\mathbf{P}_{i}\left( \mathbf{X}\right) +\sum_{i=1}^{4}a_{i}\left( \mathbf{X}%
\right) \left( \mathrm{D}_{\mathbf{X}}^{2}\mathbf{P}_{i}\right) \left( 
\mathbf{U}\right) \left( \mathbf{S}\right)  \notag \\
&&+\sum_{i=1}^{4}\left( \mathrm{D}_{\mathbf{X}}a_{i}\right) \left( \mathbf{U}%
\right) \left( \mathrm{D}_{\mathbf{X}}\mathbf{P}_{i}\right) \left( \mathbf{S}%
\right) +\sum_{i=1}^{4}\left( \mathrm{D}_{\mathbf{X}}a_{i}\right) \left( 
\mathbf{S}\right) \left( \mathrm{D}_{\mathbf{X}}\mathbf{P}_{i}\right) \left( 
\mathbf{U}\right)  \label{d2exp}
\end{eqnarray}%
and the second derivative of $\mathbf{dexp}_{\mathbf{X}}^{-1}$ is obtained
by replacing $a_{i}$ with $b_{i}$ in (\ref{d2exp}).

\begin{lemma}
The second directional derivatives of $\mathbf{P}_{i}$ along $\mathbf{U}%
=\left( \mathbf{u},\mathbf{v}\right) $ and $\mathbf{S}=\left( \mathbf{s},%
\mathbf{r}\right) $ can be written as%
\begin{equation}
(\mathrm{D}_{\mathbf{X}}^{2}\mathbf{P}_{i})%
\hspace{-0.5ex}%
\left( \mathbf{U}\right) (\mathbf{S})=\sum_{j=1}^{i-1}\left( \left( \mathrm{D%
}_{\mathbf{X}}\mathbf{P}_{j}\right) 
\hspace{-0.5ex}%
\left( \mathbf{S}\right) \mathbf{ad}_{\mathbf{U}}\mathbf{P}_{i-j-1}(\mathbf{X%
})+\mathbf{P}_{i-j-1}(\mathbf{X})\mathbf{ad}_{\mathbf{U}}\left( \mathrm{D}_{%
\mathbf{X}}\mathbf{P}_{j}\right) 
\hspace{-0.5ex}%
\left( \mathbf{S}\right) \right) ,i\geq 2.  \label{d2PiGeneral}
\end{equation}%
The non-zero second directional derivatives of the coefficients (\ref{ab})
are%
\begin{eqnarray}
\left( \mathrm{D}_{\mathbf{X}}^{2}a_{i}\right) 
\hspace{-0.5ex}%
\left( \mathbf{U}\right) 
\hspace{-0.5ex}%
\left( \mathbf{S}\right) &=&\left( \mathbf{s}^{T}\mathbf{u}\right) \bar{a}%
_{i}\left( \mathbf{x}\right) +\left( \mathbf{x}^{T}\mathbf{u}\right) \left( 
\mathbf{x}^{T}\mathbf{s}\right) \breve{a}_{i}\left( \mathbf{x}\right)
\label{D2ai} \\
\left( \mathrm{D}_{\mathbf{X}}^{2}b_{i}\right) 
\hspace{-0.5ex}%
\left( \mathbf{U}\right) 
\hspace{-0.5ex}%
\left( \mathbf{S}\right) &=&\left( \mathbf{s}^{T}\mathbf{u}\right) \bar{b}%
_{i}\left( \mathbf{x}\right) +\left( \mathbf{x}^{T}\mathbf{u}\right) \left( 
\mathbf{x}^{T}\mathbf{s}\right) \breve{b}_{i}\left( \mathbf{x}\right)
\label{D2bi}
\end{eqnarray}%
with (expressions in terms of parameter $\delta $ are omitted)%
\begin{eqnarray}
\breve{a}_{1}\left( \mathbf{x}\right) &=&\tfrac{1}{\varphi ^{2}}\left( 
\tfrac{\alpha }{2}-\tfrac{7}{4}\beta \right) +\tfrac{1}{\varphi ^{4}}\left( 
\tfrac{7}{2}-\tfrac{23}{2}\alpha +8\beta \right) ,\ \breve{a}_{1}\left( 
\mathbf{0}\right) =-\frac{1}{90}  \label{aai} \\
\breve{a}_{2}\left( \mathbf{x}\right) &=&\tfrac{1}{\varphi ^{2}}\tfrac{1}{4}%
\beta +\tfrac{1}{\varphi ^{4}}\left( 5\alpha -\tfrac{43}{4}\beta -\tfrac{1}{2%
}\right) +\tfrac{1}{\varphi ^{6}}\left( \tfrac{75}{2}-\tfrac{75}{2}\alpha
\right) ,\ \breve{a}_{0}\left( \mathbf{0}\right) =-\frac{1}{630}  \notag \\
\breve{a}_{3}\left( \mathbf{x}\right) &=&\tfrac{1}{\varphi ^{4}}\left( 
\tfrac{1}{2}\alpha -\tfrac{9}{4}\beta \right) +\tfrac{1}{\varphi ^{6}}\left( 
\tfrac{9}{2}-\tfrac{33}{2}\alpha +12\beta \right) ,\ \breve{a}_{3}\left( 
\mathbf{0}\right) =-\frac{1}{1680}  \notag \\
\breve{a}_{4}\left( \mathbf{x}\right) &=&\tfrac{1}{\varphi ^{4}}\tfrac{1}{4}%
\beta +\tfrac{1}{\varphi ^{6}}\left( 6\alpha -\tfrac{1}{2}-\tfrac{57}{4}%
\beta \right) +\tfrac{1}{\varphi ^{8}}\left( \tfrac{105}{2}-\tfrac{105}{2}%
\alpha \right) ,\ \breve{a}_{4}\left( \mathbf{0}\right) =\frac{1}{15120} 
\notag
\end{eqnarray}%
\begin{eqnarray}
\breve{b}_{2}\left( \mathbf{x}\right) &=&\tfrac{1}{\varphi ^{6}}\left( 16+%
\tfrac{1}{\beta ^{2}}+\left( \tfrac{8}{\beta }-\tfrac{9}{2}\right) \gamma
-\left( 9+\tfrac{4}{\beta }\right) \gamma ^{2}-12\gamma ^{3}+\tfrac{9}{%
2\beta }\right) -\tfrac{1}{\varphi ^{4}}\left( \tfrac{9}{4}+3\gamma +\tfrac{1%
}{2\beta }\right) ,\ \breve{b}_{2}\left( \mathbf{0}\right) =-\frac{1}{3780}
\label{bbi} \\
\breve{b}_{4}\left( \mathbf{x}\right) &=&\tfrac{1}{\varphi ^{8}}\left( 24-%
\tfrac{2}{\beta }-\left( \tfrac{15}{2}+\tfrac{4}{\beta }\right) \gamma
-\left( \tfrac{11}{2}+\tfrac{3}{\beta }\right) \gamma ^{2}-2\gamma
^{3}\right) -\tfrac{1}{\varphi ^{6}}\left( \tfrac{11}{8}+\tfrac{1}{4\beta }+%
\tfrac{1}{2}\gamma \right) ,\ \breve{b}_{4}\left( \mathbf{0}\right) =-\frac{1%
}{50400}.  \notag
\end{eqnarray}
\end{lemma}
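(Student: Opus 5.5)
The proof has two independent parts: the formula (\ref{d2PiGeneral}) for $\mathrm{D}_{\mathbf{X}}^{2}\mathbf{P}_{i}$, and the second derivatives (\ref{D2ai}), (\ref{D2bi}) of the coefficients.

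For the first part I will differentiate the first-derivative formula of Lemma \ref{lemDerPi} a second time. Write
\[
(\mathrm{D}_{\mathbf{X}}\mathbf{P}_{i})(\mathbf{U})=\sum_{j=0}^{i-1}\mathbf{P}_{j}(\mathbf{X})\,\mathbf{ad}_{\mathbf{U}}\,\mathbf{P}_{i-j-1}(\mathbf{X}).
\]
The factor $\mathbf{ad}_{\mathbf{U}}$ does not depend on $\mathbf{X}$, so the derivative along $\mathbf{S}$ follows from the product rule:
\[
\sum_{j=0}^{i-1}\Big((\mathrm{D}_{\mathbf{X}}\mathbf{P}_{j})(\mathbf{S})\,\mathbf{ad}_{\mathbf{U}}\,\mathbf{P}_{i-j-1}+\mathbf{P}_{j}\,\mathbf{ad}_{\mathbf{U}}\,(\mathrm{D}_{\mathbf{X}}\mathbf{P}_{i-j-1})(\mathbf{S})\Big).
\]
Since $\mathbf{P}_{0}=\mathbf{I}$, its derivative vanishes, so the $j=0$ term drops from the first group. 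In the second group I substitute $j\mapsto i-1-j$; the term that was $j=i-1$ drops for the same reason. Both groups then run over $j=1,\dots,i-1$ and combine into (\ref{d2PiGeneral}), valid for $i\geq2$.

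For the coefficients I will use (\ref{Da}): $(\mathrm{D}_{\mathbf{X}}a_{i})(\mathbf{U})=(\mathbf{x}^{T}\mathbf{u})\,\bar{a}_{i}(\mathbf{x})$. Differentiating along $\mathbf{S}=(\mathbf{s},\mathbf{r})$ gives
\[
(\mathbf{s}^{T}\mathbf{u})\,\bar{a}_{i}+(\mathbf{x}^{T}\mathbf{u})\,(\mathrm{D}_{\mathbf{x}}\bar{a}_{i})(\mathbf{s}).
\]
Each $\bar{a}_{i}$ is a combination of $\alpha,\beta$ and powers of $\varphi^{-2}$, so by (\ref{DD}) its derivative has the form $(\mathbf{x}^{T}\mathbf{s})\,\breve{a}_{i}$. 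Concretely:
\begin{itemize}
\item $\mathrm{D}(\varphi^{-2k})(\mathbf{s})=-2k\,\varphi^{-2k-2}\,\mathbf{x}^{T}\mathbf{s}$;
\item $\mathrm{D}\alpha(\mathbf{s})$ and $\mathrm{D}\beta(\mathbf{s})$ are given by (\ref{DD});
\item $\delta$ is eliminated via $\delta=(1-\alpha)/\varphi^{2}$.
\end{itemize}
Collecting by powers of $\varphi$ should produce the stated $\breve{a}_{i}$. I expect $\breve{a}_{3}=\breve{a}_{1}/\varphi^{2}+\ldots$ to follow quickly from $\bar{a}_{3}=\bar{a}_{1}/\varphi^{2}$, and similarly for $\breve{a}_{4}$. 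The $\breve{b}_{i}$ are obtained the same way from (\ref{b2}), using the $\gamma$ forms with $(\mathrm{D}\gamma)$ from (\ref{DD}) and $\mathrm{D}(1/\beta)=-\beta^{-2}\,\mathrm{D}\beta$. I will rewrite $\beta^{-1}$-type terms through $\gamma=\alpha/\beta$ to match the stated form.

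The main obstacle is the limits $\breve{a}_{i}(\mathbf{0})$ and $\breve{b}_{i}(\mathbf{0})$. Every expression is a difference of terms that individually blow up like $\varphi^{-8}$. I will insert the Taylor series
\[
\alpha=1-\tfrac{\varphi^{2}}{6}+\tfrac{\varphi^{4}}{120}-\dots,\qquad \beta=1-\tfrac{\varphi^{2}}{12}+\tfrac{\varphi^{4}}{360}-\dots,
\]
and the corresponding series for $\gamma$ and $1/\beta$, carried to order $\varphi^{8}$ (one order beyond the highest pole). I will check that all negative powers cancel and read off the constant term. As a cross-check I will differentiate the series expansion of $a_{i}$ in $\varphi^{2}$ directly: if $a_{i}=c_{0}+c_{1}\varphi^{2}+c_{2}\varphi^{4}+\dots$, then $\bar{a}_{i}=2c_{1}+4c_{2}\varphi^{2}+\dots$ and $\breve{a}_{i}(\mathbf{0})=8c_{2}$. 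This avoids the cancellations entirely and confirms constants such as $-\frac{1}{90}$ and $-\frac{1}{50400}$.
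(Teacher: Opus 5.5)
Your route is the paper's route. You get (\ref{d2PiGeneral}) by the product rule applied to (\ref{DPi}), and your reindexing with $(\mathrm{D}_{\mathbf X}\mathbf P_0)(\mathbf S)=\mathbf 0$ is correct. You get $\breve a_i,\breve b_i$ by differentiating $(\mathbf x^T\mathbf u)\bar a_i(\mathbf x)$ with (\ref{DD}). For the $a_i$ this goes through verbatim: combinations of $1,\alpha,\beta$ with coefficients in powers of $\varphi^{-2}$ are unique, and collecting terms reproduces the four stated $\breve a_i$.

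Two of your steps, however, fail as written, because they rely on data in the paper that are wrong.

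First, the formula for $\mathrm{D}\gamma$ in (\ref{DD}) has a sign error. Since $\gamma=\tfrac{\varphi}{2}\cot\tfrac{\varphi}{2}=1-\tfrac{\varphi^2}{12}-\dots$, the correct derivative is $(\mathrm D_{\mathbf x}\gamma)(\mathbf u)=\big(\tfrac{\gamma-\gamma^2}{\varphi^2}-\tfrac14\big)\mathbf x^T\mathbf u=-\tfrac{\delta}{\beta}\,\mathbf x^T\mathbf u$. The printed coefficient tends to $-\tfrac13$ instead of $-\tfrac16$. Differentiating the $\gamma$-forms of $\bar b_2,\bar b_4$ with the printed formula will therefore not give (\ref{bbi}). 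Use either the corrected formula or the $\alpha,\beta$-forms in (\ref{b2}), which need only $\mathrm D\alpha$ and $\mathrm D\beta$.

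Moreover, rewriting through $\gamma=\alpha/\beta$ is not enough to match (\ref{bbi}). Expressions in $\gamma,\beta^{-1},\varphi$ are not unique, and you need the identity $\tfrac1\beta=\gamma^2+\tfrac{\varphi^2}{4}$ (i.e.\ $\sin^2\varphi+\cos^2\varphi=1$). For instance, use the corrected $\mathrm D\gamma$ and $(\mathrm D_{\mathbf x}\beta^{-1})(\mathbf u)=\tfrac{2(1-\gamma)}{\beta\varphi^2}\mathbf x^T\mathbf u$ on the $\gamma$-form of $\bar b_2$. This gives $\breve b_2=\varphi^{-6}\big(16+\tfrac{3}{\beta}-\tfrac92\gamma+\tfrac{2\gamma}{\beta}-\tfrac{15}{2}\gamma^2-\tfrac{3\gamma^2}{\beta}-6\gamma^3\big)-\varphi^{-4}\big(\tfrac{15}{8}+\tfrac{1}{4\beta}+\tfrac32\gamma\big)$. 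This equals the stated expression only modulo that identity.

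Second, your cross-check $\breve a_i(\mathbf 0)=8c_2$ is the right tool for the limits; the paper gives no argument for them at all. But it does not confirm every stated constant. From $\mathbf{ad}^7_{\mathbf X}=2\varphi^6\mathbf{ad}_{\mathbf X}+3\varphi^4\mathbf{ad}^3_{\mathbf X}$ you get $a_3=\tfrac1{24}-\tfrac{\varphi^2}{360}+\tfrac{\varphi^4}{13440}-\dots$, hence $\breve a_3(\mathbf 0)=+\tfrac{1}{1680}$. The stated explicit $\breve a_3$ indeed tends to $-\tfrac1{480}+\tfrac{3}{1120}=+\tfrac{1}{1680}$, so the minus sign in the statement is a typo. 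The label $\breve a_0(\mathbf 0)$ is also a typo for $\breve a_2(\mathbf 0)$.

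The other limits $-\tfrac1{90},-\tfrac1{630},\tfrac1{15120},-\tfrac1{3780},-\tfrac1{50400}$ do come out of your method. For the $b_i$ this requires $B_6=\tfrac1{42}$ (not the printed $\tfrac1{46}$) and $B_8=-\tfrac1{30}$; alternatively, expand the closed forms of $b_2,b_4$ directly.
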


\begin{proof}
Relation (\ref{d2PiGeneral}) follows from (\ref{DPi}). According to (\ref{DD}%
), the derivative of all $\bar{a}_{i}$ involve the term $\mathbf{x}^{T}%
\mathbf{s}$, which are thus of the form $\left( \mathrm{D}_{\mathbf{X}}\bar{a%
}_{i}\right) 
\hspace{-0.5ex}%
\left( \mathbf{s}\right) =\left( \mathbf{x}^{T}\mathbf{s}\right) \breve{a}%
_{i}$, and the same holds true for $\bar{b}_{i}$. The explicit expressions
for $\breve{a}_{i}\left( \mathbf{x}\right) $ and $\breve{b}_{i}\left( 
\mathbf{x}\right) $ are obtained after application of (\ref{DD}) to (\ref{a1}%
), and (\ref{b2}), respectively.
\end{proof}

For $i=1,\ldots ,4$, relation (\ref{d2PiGeneral}) yields $(\mathrm{D}_{\hat{%
\mathbf{X}}}^{1}\mathbf{P}_{1})%
\hspace{-0.5ex}%
\left( \mathbf{U}\right) 
\hspace{-0.5ex}%
\left( \mathbf{S}\right) =\mathbf{0}$, and%
\begin{eqnarray}
\left( \mathrm{D}_{\mathbf{X}}^{2}\mathbf{P}_{2}\right) 
\hspace{-0.5ex}%
\left( \mathbf{U}\right) 
\hspace{-0.5ex}%
\left( \mathbf{S}\right) &=&\mathbf{ad}_{\mathbf{S}}\mathbf{ad}_{\mathbf{U}}+%
\mathbf{ad}_{\mathbf{U}}\mathbf{ad}_{\mathbf{S}}  \notag \\
\left( \mathrm{D}_{\mathbf{X}}^{2}\mathbf{P}_{3}\right) 
\hspace{-0.5ex}%
\left( \mathbf{U}\right) 
\hspace{-0.5ex}%
\left( \mathbf{S}\right) &=&(\mathbf{ad}_{\mathbf{X}}\mathbf{ad}_{\mathbf{S}%
}+\mathbf{ad}_{\mathbf{S}}\mathbf{ad}_{\mathbf{X}})\mathbf{ad}_{\mathbf{U}}+(%
\mathbf{ad}_{\mathbf{U}}\mathbf{ad}_{\mathbf{S}}+\mathbf{ad}_{\mathbf{S}}%
\mathbf{ad}_{\mathbf{U}})\mathbf{ad}_{\mathbf{X}}+(\mathbf{ad}_{\mathbf{U}}%
\mathbf{ad}_{\mathbf{X}}+\mathbf{ad}_{\mathbf{X}}\mathbf{ad}_{\mathbf{U}})%
\mathbf{ad}_{\mathbf{S}}  \notag \\
\left( \mathrm{D}_{\mathbf{X}}^{2}\mathbf{P}_{4}\right) 
\hspace{-0.5ex}%
\left( \mathbf{U}\right) 
\hspace{-0.5ex}%
\left( \mathbf{S}\right) &=&\mathbf{ad}_{\mathbf{S}}(\mathbf{ad}_{\mathbf{X}%
}^{2}\mathbf{ad}_{\mathbf{U}}+\mathbf{ad}_{\mathbf{U}}\mathbf{ad}_{\mathbf{X}%
}^{2}+\mathbf{ad}_{\mathbf{X}}\mathbf{ad}_{\mathbf{U}}\mathbf{ad}_{\mathbf{X}%
})+\mathbf{ad}_{\mathbf{U}}\left( \mathbf{ad}_{\mathbf{X}}^{2}\mathbf{ad}_{%
\mathbf{S}}+\mathbf{ad}_{\mathbf{S}}\mathbf{ad}_{\mathbf{X}}^{2}+\mathbf{ad}%
_{\mathbf{X}}\mathbf{ad}_{\mathbf{S}}\mathbf{ad}_{\mathbf{X}}\right)  \notag
\\
&&+\mathbf{ad}_{\mathbf{X}}^{2}(\mathbf{ad}_{\mathbf{S}}\mathbf{ad}_{\mathbf{%
U}}+\mathbf{ad}_{\mathbf{U}}\mathbf{ad}_{\mathbf{S}})+\mathbf{ad}_{\mathbf{X}%
}(\mathbf{ad}_{\mathbf{S}}\left( \mathbf{ad}_{\mathbf{X}}\mathbf{ad}_{%
\mathbf{U}}+\mathbf{ad}_{\mathbf{U}}\mathbf{ad}_{\mathbf{X}}\right) +\mathbf{%
ad}_{\mathbf{U}}\left( \mathbf{ad}_{\mathbf{X}}\mathbf{ad}_{\mathbf{S}}+%
\mathbf{ad}_{\mathbf{S}}\mathbf{ad}_{\mathbf{X}}\right) ).  \label{d2Pi}
\end{eqnarray}

\begin{remark}
The power of $\varphi $ appearing in the denominators in (\ref{aai}) and (%
\ref{bbi}) can be reduced expressing the final relation using the normalized
vector $\mathbf{n}:=\mathbf{x}/\left\Vert \mathbf{x}\right\Vert $ and $%
\mathbf{N}:=\mathbf{X}/\left\Vert \mathbf{x}\right\Vert $, analogously to (%
\ref{dexpAlt}) and (\ref{dexpInvAlt}). The explicit final expressions are
readily obtained, and are omitted here.
\end{remark}

\subsection{A Hessian Matrix}

Consider the expression $h\left( \mathbf{X}\right) =\mathbf{Q}^{T}\mathbf{%
dexp}_{\mathbf{X}}\mathbf{Z}$, which, for given $\mathbf{Q,Z}\in \mathbb{R}%
^{3}\times \mathbb{R}^{3}$, is a function of $\mathbf{X}$ (more precisely,
the arguments should be interpreted as $\mathbf{Q}\in se^{\ast }\left(
3\right) $ and $\mathbf{Z}\in se\left( 3\right) $). The second directional
derivative can be expressed as $\left( \mathrm{D}_{\mathbf{X}}^{2}\left( 
\mathbf{Q}^{T}\mathbf{dexp}\,\mathbf{Z}\right) \right) (\mathbf{U})(\mathbf{S%
})=\left( \mathrm{D}_{\mathbf{X}}^{2}\mathbf{dexp}\right) (\mathbf{U})(%
\mathbf{S})\mathbf{Z}=\sum_{k,l}\frac{\partial ^{2}h}{\partial X_{k}\partial
X_{l}}S_{k}U_{l}=\mathbf{S}^{T}\mathbf{HU}$, where $\mathbf{H}=\left[ \frac{%
\partial ^{2}h}{\partial X_{i}\partial X_{j}}\right] $ is the Hessian of $h$.

\begin{lemma}
The Hessian of the function $h\left( \mathbf{X}\right) =\mathbf{Q}^{T}%
\mathbf{dexp}_{\mathbf{X}}\mathbf{Z}$ is $\mathbf{H}=\mathbf{H}_{1}+\mathbf{H%
}_{2}+\mathbf{H}_{3}$, with%
\begin{equation}
\mathbf{H}_{1}=\sum_{i=1}^{4}\mathbf{H}_{1i},\ \mathbf{H}_{2}=\sum_{i=2}^{4}%
\left( \mathbf{H}_{2i}+\mathbf{H}_{2i}^{T}\right) ,\ \ \mathbf{H}%
_{3}=\sum_{i=1}^{4}\left( \mathbf{H}_{3i}+\mathbf{H}_{3i}^{T}\right) 
\label{H123}
\end{equation}%
where the individual matrices posses the following closed forms, denoting $%
\mathbf{Z}_{i}:=\mathbf{ad}_{\mathbf{X}}^{i}\mathbf{Z}$ and $\overline{%
\mathbf{Q}}_{i}:=(\mathbf{ad}_{\mathbf{X}}^{i})^{T}\mathbf{Q}$,%
\begin{equation}
\mathbf{H}_{1i}\left( \mathbf{X}\right) =%
\begin{bmatrix}
\left( \bar{a}_{i}\left( \mathbf{x}\right) \mathbf{I}+\breve{a}_{i}\left( 
\mathbf{x}\right) \mathbf{xx}^{T}\right) \mathbf{Q}^{T}\mathbf{Z}_{i} & 
\mathbf{0} \\ 
\mathbf{0} & \mathbf{0}%
\end{bmatrix}
\label{H1i}
\end{equation}%
\begin{eqnarray}
\mathbf{H}_{22}\left( \mathbf{X}\right)  &=&a_{2}\left( \mathbf{x}\right) 
\overline{\mathbf{ad}}_{\mathbf{Q}}\mathbf{ad}_{\mathbf{Z}}  \label{H2i} \\
\mathbf{H}_{23}\left( \mathbf{X}\right)  &=&a_{3}\left( \mathbf{x}\right)
\left( \overline{\mathbf{ad}}_{\overline{\mathbf{Q}}_{1}}\mathbf{ad}_{%
\mathbf{Z}}+\overline{\mathbf{ad}}_{\mathbf{Q}}\mathbf{ad}_{\mathbf{X}}%
\mathbf{ad}_{\mathbf{Z}}+\overline{\mathbf{ad}}_{\mathbf{Q}}\mathbf{ad}_{%
\mathbf{Z}_{1}}\right)   \notag \\
\mathbf{H}_{24}\left( \mathbf{X}\right)  &=&a_{4}\left( \mathbf{x}\right)
\left( \overline{\mathbf{ad}}_{\overline{\mathbf{Q}}_{2}}\mathbf{ad}_{%
\mathbf{Z}}+\overline{\mathbf{ad}}_{\overline{\mathbf{Q}}_{1}}\left( \mathbf{%
ad}_{\mathbf{Z}_{1}}+\mathbf{ad}_{\mathbf{X}}\mathbf{ad}_{\mathbf{Z}}\right)
\right.   \notag \\
&&\ \ \ \ \ \ \ \ \ +\left. \overline{\mathbf{ad}}_{\mathbf{Q}}\left( 
\mathbf{ad}_{\mathbf{X}}^{2}\mathbf{ad}_{\mathbf{Z}}+\mathbf{ad}_{\mathbf{X}}%
\mathbf{ad}_{\mathbf{Z}_{1}}+\mathbf{ad}_{\mathbf{Z}_{2}}\right) \right)  
\notag
\end{eqnarray}%
\begin{eqnarray}
\mathbf{H}_{31}\left( \mathbf{X}\right)  &=&-\bar{a}_{1}\left( \mathbf{x}%
\right) 
\begin{bmatrix}
\mathbf{x} \\ 
\mathbf{0}%
\end{bmatrix}%
\mathbf{Q}^{T}\mathbf{ad}_{\mathbf{Z}}  \label{H3i} \\
\mathbf{H}_{32}\left( \mathbf{X}\right)  &=&\bar{a}_{2}\left( \mathbf{x}%
\right) 
\begin{bmatrix}
\mathbf{x} \\ 
\mathbf{0}%
\end{bmatrix}%
\mathbf{Q}^{T}\left( \mathbf{ad}_{\mathbf{Z}}\mathbf{ad}_{\mathbf{X}}-2%
\mathbf{ad}_{\mathbf{X}}\mathbf{ad}_{\mathbf{Z}}\right)   \notag \\
\mathbf{H}_{33}\left( \mathbf{X}\right)  &=&-\bar{a}_{3}\left( \mathbf{x}%
\right) 
\begin{bmatrix}
\mathbf{x} \\ 
\mathbf{0}%
\end{bmatrix}%
\mathbf{Q}^{T}\left( \mathbf{ad}_{\mathbf{X}}^{2}\mathbf{ad}_{\mathbf{Z}}+%
\mathbf{ad}_{\mathbf{X}}\mathbf{ad}_{\mathbf{Z}_{1}}+\mathbf{ad}_{\mathbf{Z}%
_{2}}\right)   \notag \\
\mathbf{H}_{34}\left( \mathbf{X}\right)  &=&-\bar{a}_{4}\left( \mathbf{x}%
\right) 
\begin{bmatrix}
\mathbf{x} \\ 
\mathbf{0}%
\end{bmatrix}%
\mathbf{Q}^{T}\left( \mathbf{ad}_{\mathbf{X}}^{3}\mathbf{ad}_{\mathbf{Z}}+%
\mathbf{ad}_{\mathbf{X}}\mathbf{ad}_{\mathbf{Z}_{2}}+\mathbf{ad}_{\mathbf{X}%
}^{2}\mathbf{ad}_{\mathbf{Z}_{1}}+\mathbf{ad}_{\mathbf{Z}_{3}}\right) . 
\notag
\end{eqnarray}%
The Hessian of the function $\bar{h}^{\mathrm{inv}}\left( \mathbf{X}\right) =%
\mathbf{Q}^{T}\mathbf{dexp}_{\mathbf{X}}^{-1}\mathbf{Z}$ is $\mathbf{H}^{%
\mathrm{inv}}=\mathbf{H}_{1}^{\mathrm{inv}}+\mathbf{H}_{2}^{\mathrm{inv}}+%
\mathbf{H}_{3}^{\mathrm{inv}}$, with%
\begin{equation}
\mathbf{H}_{1}^{\mathrm{inv}}=\sum_{i=2,4}\mathbf{H}_{1i}^{\mathrm{inv}},\ \ 
\mathbf{H}_{2}^{\mathrm{inv}}=\sum_{i=2,4}\left( \mathbf{H}_{2i}^{\mathrm{inv%
}}+(\mathbf{H}_{2i}^{\mathrm{inv}})^{T}\right) ,\ \ \mathbf{H}_{3}^{\mathrm{%
inv}}=\sum_{i=2,4}\left( \mathbf{H}_{3i}^{\mathrm{inv}}+(\mathbf{H}_{3i}^{%
\mathrm{inv}})^{T}\right) 
\end{equation}%
where $\mathbf{H}_{1i}^{\mathrm{inv}},\mathbf{H}_{2i}^{\mathrm{inv}},\mathbf{%
H}_{3i}^{\mathrm{inv}}$ are the $\mathbf{H}_{1i},\mathbf{H}_{2i},\mathbf{H}%
_{3i}$ in (\ref{H1i}),(\ref{H2i}),(\ref{H3i}) with $a_{i},\bar{a}_{i},\breve{%
a}_{i}$ replaced by $b_{i},\bar{b}_{i},\breve{b}_{i}$, respectively.
\end{lemma}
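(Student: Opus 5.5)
The plan is to start from the expansion (\ref{d2exp}) of the second directional derivative, contract it from the left with $\mathbf{Q}^{T}$ and from the right with $\mathbf{Z}$, and rewrite every term in the form $\mathbf{S}^{T}(\cdot)\mathbf{U}$. Then $\mathbf{H}$ can be read off directly. The four sums in (\ref{d2exp}) separate naturally:

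\begin{itemize}
\item The first sum, with $\mathrm{D}^{2}a_{i}$, will give $\mathbf{H}_{1}$.
\item The second sum, with $a_{i}\mathrm{D}^{2}\mathbf{P}_{i}$, will give $\mathbf{H}_{2}$.
\item The two mixed sums, with $\mathrm{D}a_{i}$ times $\mathrm{D}\mathbf{P}_{i}$, will give $\mathbf{H}_{3}$.
\end{itemize}

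Since the two mixed sums differ only by swapping $\mathbf{U}\leftrightarrow\mathbf{S}$, they will produce a matrix together with its transpose. This is the origin of $\mathbf{H}_{3i}+\mathbf{H}_{3i}^{T}$.

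For $\mathbf{H}_{1}$ I insert (\ref{D2ai}). This gives
\[
\mathbf{Q}^{T}\mathbf{P}_{i}\mathbf{Z}\bigl(\bar{a}_{i}\,\mathbf{s}^{T}\mathbf{u}+\breve{a}_{i}\,(\mathbf{s}^{T}\mathbf{x})(\mathbf{x}^{T}\mathbf{u})\bigr)
=\mathbf{S}^{T}\mathbf{H}_{1i}\mathbf{U},
\]
where only the upper-left $3\times 3$ block of $\mathbf{H}_{1i}$ is nonzero, because the coefficients depend on $\mathbf{x}$ alone. This reproduces (\ref{H1i}).

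For $\mathbf{H}_{3}$ I use (\ref{Da}), which gives $(\mathrm{D}_{\mathbf{X}}a_{i})(\mathbf{S})=\bar{a}_{i}[\mathbf{x}^{T}\ \mathbf{0}]\mathbf{S}$. I then need $\mathbf{Q}^{T}(\mathrm{D}_{\mathbf{X}}\mathbf{P}_{i})(\mathbf{U})\mathbf{Z}$ written as a row vector times $\mathbf{U}$. That is precisely the computation already carried out in the proof of the Jacobian lemma (\ref{PartialDexp}): skew symmetry and the Jacobi identity turn $(\mathrm{D}_{\mathbf{X}}\mathbf{P}_{i})(\mathbf{U})\mathbf{Z}$ into $-(\ldots)\mathbf{U}$, with the bracketed matrices listed in (\ref{H3i}). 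Multiplying by $\bar{a}_{i}[\mathbf{x};\mathbf{0}]$ on the left gives $\mathbf{H}_{3i}$. The companion term with $\mathbf{U}$ and $\mathbf{S}$ exchanged is its transpose.

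For $\mathbf{H}_{2}$ I use (\ref{d2Pi}). Each term in it has the form $\mathbf{A}\,\mathbf{ad}_{\mathbf{S}}\,\mathbf{B}\,\mathbf{ad}_{\mathbf{U}}\,\mathbf{C}$ or the same with $\mathbf{S}$ and $\mathbf{U}$ exchanged, where $\mathbf{A},\mathbf{B},\mathbf{C}$ are powers of $\mathbf{ad}_{\mathbf{X}}$. The key identities are
\[
\mathbf{ad}_{\mathbf{U}}\mathbf{W}=-\mathbf{ad}_{\mathbf{W}}\mathbf{U},\qquad
\mathbf{ad}_{\mathbf{S}}^{T}\mathbf{Q}'=\overline{\mathbf{ad}}_{\mathbf{Q}'}\mathbf{S},
\]
the latter from (\ref{adBar}). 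With them,
\[
\mathbf{Q}^{T}\mathbf{A}\,\mathbf{ad}_{\mathbf{S}}\,\mathbf{B}\,\mathbf{ad}_{\mathbf{U}}\,\mathbf{C}\mathbf{Z}
=-(\overline{\mathbf{ad}}_{\mathbf{A}^{T}\mathbf{Q}}\mathbf{S})^{T}\mathbf{B}\,\mathbf{ad}_{\mathbf{C}\mathbf{Z}}\mathbf{U}
=\mathbf{S}^{T}\,\overline{\mathbf{ad}}_{\mathbf{A}^{T}\mathbf{Q}}\,\mathbf{B}\,\mathbf{ad}_{\mathbf{C}\mathbf{Z}}\,\mathbf{U},
\]
using the skew symmetry $\overline{\mathbf{ad}}^{T}=-\overline{\mathbf{ad}}$. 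Here $\mathbf{A}^{T}\mathbf{Q}=\overline{\mathbf{Q}}_{k}$ and $\mathbf{C}\mathbf{Z}=\mathbf{Z}_{l}$. The terms with $\mathbf{S}$ and $\mathbf{U}$ in the opposite order produce the transposed matrices.

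I then sort the terms of $(\mathrm{D}^{2}\mathbf{P}_{i})$ into these two groups:
\begin{itemize}
\item For $i=2$, this gives $a_{2}\overline{\mathbf{ad}}_{\mathbf{Q}}\mathbf{ad}_{\mathbf{Z}}$ plus its transpose.
\item For $i=3$ and $i=4$, I enumerate all placements of two distinguished factors $\mathbf{ad}_{\mathbf{S}},\mathbf{ad}_{\mathbf{U}}$ among $i-2$ copies of $\mathbf{ad}_{\mathbf{X}}$, which yields the listed sums.
\end{itemize}

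The main obstacle is this bookkeeping for $i=3,4$. Every placement with $\mathbf{ad}_{\mathbf{S}}$ left of $\mathbf{ad}_{\mathbf{U}}$ has to be matched with the listed entry in $\mathbf{H}_{2i}$, with a consistent sign. I would check this with the six words for $i=3$ and the twelve words for $i=4$ obtained from $\sum_{j}\mathbf{P}_{j}\mathbf{ad}_{\mathbf{U}}\mathbf{P}_{i-j-1}$ differentiated once more via Lemma \ref{lemDerPi}.

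The inverse case uses the same computation with $a_{i},\bar{a}_{i},\breve{a}_{i}$ replaced by $b_{i},\bar{b}_{i},\breve{b}_{i}$. It keeps only the $i=2,4$ terms, because $b_{3}=0$ and $b_{1}$ is constant.
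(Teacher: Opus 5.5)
Your proposal is correct and follows the paper's proof essentially step by step. You split (\ref{d2exp}) into the $\mathrm{D}^{2}a_{i}$, $a_{i}\mathrm{D}^{2}\mathbf{P}_{i}$ and mixed $\mathrm{D}a_{i}\,\mathrm{D}\mathbf{P}_{i}$ terms, and then use (\ref{D2ai}) for $\mathbf{H}_{1i}$, skew symmetry together with $\mathbf{ad}_{\mathbf{S}}^{T}\mathbf{Q}=\overline{\mathbf{ad}}_{\mathbf{Q}}\mathbf{S}$ for $\mathbf{H}_{2i}$, and the Jacobian computation behind (\ref{PartialDexp}) for $\mathbf{H}_{3i}$. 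Your word identity $\mathbf{Q}^{T}\mathbf{A}\,\mathbf{ad}_{\mathbf{S}}\mathbf{B}\,\mathbf{ad}_{\mathbf{U}}\mathbf{C}\mathbf{Z}=\mathbf{S}^{T}\overline{\mathbf{ad}}_{\mathbf{A}^{T}\mathbf{Q}}\mathbf{B}\,\mathbf{ad}_{\mathbf{C}\mathbf{Z}}\mathbf{U}$ is simply a systematic form of the paper's ``algebraic manipulation''. Applied to the three (for $i=3$) and six (for $i=4$) words with $\mathbf{ad}_{\mathbf{S}}$ to the left of $\mathbf{ad}_{\mathbf{U}}$, it reproduces the listed $\mathbf{H}_{2i}$ exactly, with all signs positive.
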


\begin{proof}
With (\ref{D2ai}), the $i$th summand of the first term in (\ref{d2exp}) is
written with matrix (\ref{H1i}) as $(\mathrm{D}_{\mathbf{X}}^{2}a_{i})\left( 
\mathbf{U}\right) \left( \mathbf{S}\right) \mathbf{Q}^{T}\mathbf{P}%
_{i}\left( \mathbf{X}\right) \mathbf{Z}=\mathbf{S}^{T}\mathbf{H}_{1i}\left( 
\mathbf{X}\right) \mathbf{U}$. The $i$th summand in the second term is
written as $\mathbf{Q}^{T}(\mathrm{D}_{\mathbf{X}}^{2}\mathbf{P}_{i})\left( 
\mathbf{U}\right) \left( \mathbf{S}\right) \mathbf{Z}=\mathbf{S}^{T}\left( 
\mathbf{H}_{2i}\left( \mathbf{X}\right) +\mathbf{H}_{2i}^{T}\left( \mathbf{X}%
\right) \right) \mathbf{U}$. Explicit expressions are obtained from (\ref%
{DPi}), and some algebraic manipulation. For $i=3$, this yields $\mathbf{Q}%
^{T}(\mathrm{D}_{\mathbf{X}}^{2}\mathbf{P}_{3})\left( \mathbf{U}\right)
\left( \mathbf{S}\right) \mathbf{Z}=\mathbf{Q}^{T}(\mathbf{ad}_{\mathbf{X}}%
\mathbf{ad}_{\mathbf{S}}\mathbf{ad}_{\mathbf{U}}+\mathbf{\mathbf{ad}_{%
\mathbf{S}}\mathbf{ad}_{\mathbf{X}}ad}_{\mathbf{U}}+\mathbf{ad}_{\mathbf{U}}%
\mathbf{\mathbf{ad}_{\mathbf{S}}\mathbf{ad}_{\mathbf{X}}}+\mathbf{ad}_{%
\mathbf{U}}\mathbf{\mathbf{ad}_{\mathbf{X}}\mathbf{ad}_{\mathbf{S}}}+\mathbf{%
\mathbf{ad}_{\mathbf{S}}ad}_{\mathbf{U}}\mathbf{\mathbf{ad}_{\mathbf{X}}}+%
\mathbf{\mathbf{ad}_{\mathbf{X}}ad}_{\mathbf{U}}\mathbf{\mathbf{ad}_{\mathbf{%
S}}})\mathbf{Z}=\mathbf{S}^{T}(\overline{\mathbf{ad}}_{\overline{\mathbf{Q}}%
_{1}}\mathbf{ad}_{\mathbf{Z}}+\overline{\mathbf{ad}}_{\mathbf{Q}}\mathbf{ad}%
_{\mathbf{X}}\mathbf{ad}_{\mathbf{Z}}+\overline{\mathbf{ad}}_{\mathbf{Q}}%
\mathbf{ad}_{\mathbf{Z}_{1}}+\mathbf{ad}_{\mathbf{Z}_{1}}^{T}\overline{%
\mathbf{ad}}_{\mathbf{Q}}^{T}+\mathbf{ad}_{\mathbf{Z}}^{T}\mathbf{ad}_{%
\mathbf{X}}^{T}\overline{\mathbf{ad}}_{\mathbf{Q}}^{T}+\mathbf{ad}_{\mathbf{Z%
}}^{T}\overline{\mathbf{ad}}_{\overline{\mathbf{Q}}_{1}}^{T})\mathbf{U}$.
Expressions for $i=2,4$ are obtained similarly. The last two terms in (\ref%
{d2exp}) are written $\left( \mathrm{D}_{\mathbf{X}}a_{i}\right) \left( 
\mathbf{U}\right) \left( \mathrm{D}_{\mathbf{X}}\mathbf{P}_{i}\right) \left( 
\mathbf{S}\right) +\left( \mathrm{D}_{\mathbf{X}}a_{i}\right) \left( \mathbf{%
S}\right) \left( \mathrm{D}_{\mathbf{X}}\mathbf{P}_{i}\right) \left( \mathbf{%
U}\right) =\mathbf{S}^{T}\left( \mathbf{H}_{3i}\left( \mathbf{X}\right) +%
\mathbf{H}_{3i}^{T}\left( \mathbf{X}\right) \right) \mathbf{U}$. The
relations (\ref{H3i}) are found by application of (\ref{D2ai}) and (\ref{DPi}%
).
\end{proof}

\section{Approximation of the Derivative of Tangent Operator, and of the
Jacobian of its Evaluation Map}

\label{secApprox1}

\subsection{Series Expansions}

The directional derivative of the \textbf{dexp} matrix admits the series
expansion, deduced from (\ref{dexpMat}),%
\begin{eqnarray}
\left( \mathrm{D}_{\mathbf{X}}\mathbf{dexp}\right) (\mathbf{U})
&=&\sum_{i=0}^{\infty }\frac{1}{\left( i+1\right) !}\left( \mathrm{D}_{%
\mathbf{X}}\mathbf{P}_{i}\right) 
\hspace{-0.5ex}%
\left( \mathbf{U}\right)  \label{Ddexp} \\
\left( \mathrm{D}_{\mathbf{X}}\mathbf{dexp}^{-1}\right) (\mathbf{U})
&=&\sum_{i=0}^{\infty }\frac{B_{i}}{i!}\left( \mathrm{D}_{\mathbf{X}}\mathbf{%
P}_{i}\right) 
\hspace{-0.5ex}%
\left( \mathbf{U}\right)  \label{DdexpInv}
\end{eqnarray}%
which is completely determined with (\ref{DPi}). More involved is the series
expansion of the Jacobian of the evaluation map. Denote again $\mathbf{Z}%
_{i}:=\mathbf{P}_{i}(\mathbf{X})\mathbf{Z}$ in the following.

\begin{lemma}
\label{lemDerVec}The Jacobian of the evaluation map $f\left( \mathbf{X}%
\right) =\mathbf{dexp}_{\mathbf{X}}\mathbf{Z}$ and $f^{\mathrm{inv}}\left( 
\mathbf{X}\right) =\mathbf{dexp}_{\mathbf{X}}^{-1}\mathbf{Z}$, parameterized
by $\mathbf{Z}$, possesses the series expansions%
\begin{equation}
\frac{\partial f}{\partial \mathbf{X}}\mathbf{=}\sum_{i=0}^{\infty }\frac{1}{%
\left( i+1\right) !}\sum_{j=0}^{i-1}\mathbf{P}_{i,j}(\mathbf{X},\mathbf{Z)}%
,\ \ \frac{\partial f^{\mathrm{inv}}}{\partial \mathbf{X}}\mathbf{=}%
\sum_{i=0}^{\infty }\frac{B_{i}}{i!}\sum_{j=0}^{i-1}\mathbf{P}_{i,j}(\mathbf{%
X},\mathbf{Z)}
\end{equation}%
where%
\begin{equation}
\mathbf{P}_{i,j}(\mathbf{X},\mathbf{Z}):=-\mathbf{P}_{j}(\mathbf{X})\mathbf{%
ad}_{\mathbf{Z}_{i-j-1}}  \label{Pij}
\end{equation}%
which satisfies the recursive relation%
\begin{equation}
\mathbf{P}_{i,j}(\mathbf{X},\mathbf{Z})=\mathbf{ad}_{\mathbf{X}}\mathbf{P}%
_{i-1,j}(\mathbf{X},\mathbf{Z})+\mathbf{P}_{j}(\mathbf{X})\mathbf{ad}_{%
\mathbf{Z}_{i-j-2}}\mathbf{ad}_{\mathbf{X}}.  \label{PijRec}
\end{equation}
\end{lemma}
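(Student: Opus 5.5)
The plan is to differentiate the series (\ref{dexpMat}) term by term, apply it to the fixed vector $\mathbf{Z}$, and then use Lemma \ref{lemDerPi} together with the skew symmetry of the Lie bracket to move the direction $\mathbf{U}$ to the far right. Since $\mathbf{dexp}_{\mathbf{X}}=\sum_i \frac{1}{(i+1)!}\mathbf{P}_i(\mathbf{X})$ is an entire power series in the entries of $\mathbf{ad}_{\mathbf{X}}$, it converges absolutely and locally uniformly together with all derivatives. Termwise differentiation is therefore legitimate, and we get (\ref{Ddexp}), namely $(\mathrm{D}_{\mathbf{X}}\mathbf{dexp})(\mathbf{U})\mathbf{Z}=\sum_i\frac{1}{(i+1)!}(\mathrm{D}_{\mathbf{X}}\mathbf{P}_i)(\mathbf{U})\mathbf{Z}$. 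The same holds for $\mathbf{dexp}^{-1}$ with coefficients $B_i/i!$, using (\ref{DdexpInv}). The series for $\mathbf{dexp}^{-1}$ has a finite radius of convergence ($\|\mathbf{x}\|<2\pi$), so for the inverse the statement is understood within that domain.

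Next, insert the closed form (\ref{DPi}) of Lemma \ref{lemDerPi}:
\begin{equation*}
(\mathrm{D}_{\mathbf{X}}\mathbf{P}_i)(\mathbf{U})\mathbf{Z}=\sum_{j=0}^{i-1}\mathbf{P}_j(\mathbf{X})\,\mathbf{ad}_{\mathbf{U}}\,\mathbf{P}_{i-j-1}(\mathbf{X})\mathbf{Z}=\sum_{j=0}^{i-1}\mathbf{P}_j(\mathbf{X})\,\mathbf{ad}_{\mathbf{U}}\mathbf{Z}_{i-j-1}.
\end{equation*}
Skew symmetry gives $\mathbf{ad}_{\mathbf{U}}\mathbf{Z}_{i-j-1}=-\mathbf{ad}_{\mathbf{Z}_{i-j-1}}\mathbf{U}$. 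Hence each summand equals $\mathbf{P}_{i,j}(\mathbf{X},\mathbf{Z})\mathbf{U}$ with $\mathbf{P}_{i,j}$ as in (\ref{Pij}). Since this expression is linear in $\mathbf{U}$, reading off the coefficient matrix, exactly as in the proof of (\ref{PartialDexp}), yields both stated series for $\partial f/\partial\mathbf{X}$ and $\partial f^{\mathrm{inv}}/\partial\mathbf{X}$. The $i=0$ terms are empty sums, consistent with $\mathbf{P}_0=\mathbf{I}$ being constant.

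For the recursion (\ref{PijRec}), take $j\le i-2$ and write $\mathbf{Z}_{i-j-1}=\mathbf{ad}_{\mathbf{X}}\mathbf{Z}_{i-j-2}=[\mathbf{X},\mathbf{Z}_{i-j-2}]$. Because $\mathbf{ad}$ is a Lie algebra homomorphism, which is equivalent to the Jacobi identity, we have $\mathbf{ad}_{[\mathbf{X},\mathbf{W}]}=\mathbf{ad}_{\mathbf{X}}\mathbf{ad}_{\mathbf{W}}-\mathbf{ad}_{\mathbf{W}}\mathbf{ad}_{\mathbf{X}}$. Therefore
\begin{equation*}
\mathbf{P}_{i,j}=-\mathbf{P}_j\mathbf{ad}_{\mathbf{X}}\mathbf{ad}_{\mathbf{Z}_{i-j-2}}+\mathbf{P}_j\mathbf{ad}_{\mathbf{Z}_{i-j-2}}\mathbf{ad}_{\mathbf{X}}.
\end{equation*}
Powers of $\mathbf{ad}_{\mathbf{X}}$ commute, so $\mathbf{P}_j\mathbf{ad}_{\mathbf{X}}=\mathbf{ad}_{\mathbf{X}}\mathbf{P}_j$. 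The first term is then $\mathbf{ad}_{\mathbf{X}}(-\mathbf{P}_j\mathbf{ad}_{\mathbf{Z}_{(i-1)-j-1}})=\mathbf{ad}_{\mathbf{X}}\mathbf{P}_{i-1,j}$, which is exactly (\ref{PijRec}).

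The main point requiring care is the index range of the recursion. It is valid for $0\le j\le i-2$, where both $\mathbf{P}_{i-1,j}$ and $\mathbf{Z}_{i-j-2}$ are defined. The boundary term $j=i-1$ has to be taken directly from the definition, $\mathbf{P}_{i,i-1}=-\mathbf{P}_{i-1}\mathbf{ad}_{\mathbf{Z}}$; equivalently, one adopts the convention $\mathbf{P}_{i-1,i-1}=\mathbf{0}$ and $\mathbf{ad}_{\mathbf{Z}_{-1}}\mathbf{ad}_{\mathbf{X}}:=-\mathbf{ad}_{\mathbf{Z}}$. I will state this explicitly so that the recursion, initialized by $\mathbf{P}_{1,0}=-\mathbf{ad}_{\mathbf{Z}}$, generates all terms.
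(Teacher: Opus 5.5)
Your proof is correct and follows the paper's argument: termwise differentiation of (\ref{dexpMat}) with Lemma \ref{lemDerPi}, skew symmetry to write each summand as $\mathbf{P}_{i,j}(\mathbf{X},\mathbf{Z})\mathbf{U}$, and the Jacobi identity (which you phrase as $\mathbf{ad}_{[\mathbf{X},\mathbf{W}]}=\mathbf{ad}_{\mathbf{X}}\mathbf{ad}_{\mathbf{W}}-\mathbf{ad}_{\mathbf{W}}\mathbf{ad}_{\mathbf{X}}$) to obtain the recursion. You also add two points the paper leaves implicit: the inverse series converges only for $\|\mathbf{x}\|<2\pi$, and the recursion holds only for $j\le i-2$, with the boundary term $\mathbf{P}_{i,i-1}=-\mathbf{P}_{i-1}\mathbf{ad}_{\mathbf{Z}}$ taken from the definition.
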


\begin{proof}
The $j$th term in the expression (\ref{DPi}) for $\left( \mathrm{D}_{\mathbf{%
X}}\mathbf{P}_{i}\right) 
\hspace{-0.5ex}%
\left( \mathbf{U}\right) \mathbf{Z}$ is $\mathbf{ad}_{\mathbf{X}}^{j}\mathbf{%
ad}_{\mathbf{U}}\mathbf{ad}_{\mathbf{X}}^{i-j-1}\mathbf{Z}=-\mathbf{ad}_{%
\mathbf{X}}^{j}\mathbf{ad}_{\mathbf{Z}_{i-j-1}}\mathbf{U=P}_{i,j}(\mathbf{X},%
\mathbf{Z})\mathbf{U}$, using the definition (\ref{Pij}). Writing $\mathbf{P}%
_{i,j}(\mathbf{X},\mathbf{Z})=\mathbf{ad}_{\mathbf{X}}^{j}\mathbf{ad}_{%
\mathbf{U}}\mathbf{ad}_{\mathbf{X}}\mathbf{ad}_{\mathbf{X}}^{i-j-2}\mathbf{Z}
$, and application of the Jacobi identity to the triple Lie bracket on the
right yields $\mathbf{P}_{i,j}(\mathbf{X},\mathbf{Z})=-\mathbf{ad}_{\mathbf{X%
}}^{j}%
\big%
(\mathbf{ad}_{\mathbf{X}}\mathbf{ad}_{\mathbf{ad}_{\mathbf{X}}^{i-j-2}%
\mathbf{Z}}\mathbf{U}+\mathbf{ad}_{\mathbf{ad}_{\mathbf{X}}^{i-j-2}\mathbf{Z}%
}\mathbf{ad}_{\mathbf{U}}\mathbf{X}%
\big%
)=%
\big%
(-\mathbf{ad}_{\mathbf{X}}^{j+1}\mathbf{ad}_{\mathbf{ad}_{\mathbf{X}}^{i-j-2}%
\mathbf{Z}}\mathbf{U}+\mathbf{ad}_{\mathbf{X}}^{j}\mathbf{ad}_{\mathbf{ad}_{%
\mathbf{X}}^{i-j-2}\mathbf{Z}}\mathbf{ad}_{\mathbf{X}}%
\big%
)\mathbf{U}$, and thus (\ref{PijRec}).
\end{proof}

\subsection{Higher-Order Approximations}

Often approximations of the dexp map and its derivatives are sufficient, or
even necessary, e.g. when switching must be avoided to ensure continuity
within optimization schemes such as differential dynamic programing (DDP).
In the following, the relations up to third order are derived. To this end,
the necessary directional derivatives of $\mathbf{P}_{i}=\mathbf{ad}_{%
\mathbf{X}}^{j}$ and of the sum of $\mathbf{P}_{i,j}$ for fixed $i$ are
listed in table \ref{tabDP}. The derivative of $\mathbf{P}_{i}$ is of order $%
i-1$ in $\mathbf{X}$, and a $k$th-order approximation is obtained by
truncating the series expansions (\ref{Ddexp}) at $i=k+1$. Approximations of
order $k=0,\ldots ,3$ of the derivative of the $\mathbf{dexp}$ matrix in (%
\ref{dexpMat}) and of the Jacobian of the evaluation map $\mathbf{dexp}_{%
\mathbf{X}}\mathbf{Z}$ are listed in table \ref{tabDdexp}. Table \ref%
{tabDdexpInv} shows approximations when the inverse of $\mathbf{dexp}$ in (%
\ref{dexpMat}) is used. 
\color[rgb]{0,0,0}%
Notice that the 1st- and 2nd-order approximations of the derivative of $%
\mathbf{dexp}^{-1}$ and $\mathbf{dexp}_{\mathbf{X}}^{-1}\mathbf{Z}$ are
identical since the odd Bernoulli numbers $B_{i}$ are zero for $i>1$.%
\color{black}%

\begin{table}[ht] \centering%
\begin{tabular}{|l|l|l|}
\hline
$i$ & $\left( \mathrm{D}_{\mathbf{X}}\mathbf{P}_{i}\right) 
\hspace{-0.5ex}%
\left( \mathbf{U}\right) $ & $\sum_{j=0}^{i-1}\mathbf{P}_{i,j}(\mathbf{X},%
\mathbf{Z)}$ \\ \hline
1 & $\mathbf{ad}_{\mathbf{U}}$ & $-\mathbf{ad}_{\mathbf{Z}}$ \\ 
2 & $\mathbf{ad}_{\mathbf{U}}\mathbf{P}_{1}+\mathbf{P}_{1}\mathbf{ad}_{%
\mathbf{U}}$ & $-\mathbf{ad}_{\mathbf{Z}_{1}}-\mathbf{P}_{1}\mathbf{ad}_{%
\mathbf{Z}}$ \\ 
3 & $\mathbf{ad}_{\mathbf{U}}\mathbf{P}_{2}+\mathbf{P}_{1}\mathbf{ad}_{%
\mathbf{U}}\mathbf{P}_{1}+\mathbf{P}_{2}\mathbf{ad}_{\mathbf{U}}$ & $-%
\mathbf{ad}_{\mathbf{Z}_{2}}-\mathbf{P}_{1}\mathbf{ad}_{\mathbf{Z}_{1}}-%
\mathbf{P}_{2}\mathbf{ad}_{\mathbf{Z}}$ \\ 
4 & $\mathbf{ad}_{\mathbf{U}}\mathbf{P}_{3}+\mathbf{P}_{2}\mathbf{ad}_{%
\mathbf{U}}\mathbf{P}_{1}+\mathbf{P}_{1}\mathbf{ad}_{\mathbf{U}}\mathbf{P}%
_{2}$ & $-\mathbf{ad}_{\mathbf{Z}_{3}}-\mathbf{P}_{1}\mathbf{ad}_{\mathbf{Z}%
_{2}}-\mathbf{P}_{2}\mathbf{ad}_{\mathbf{Z}_{1}}-\mathbf{P}_{3}\mathbf{ad}_{%
\mathbf{Z}}$ \\ 
& $\ \ \ \ \ \ \ \ \ \ \ +\mathbf{P}_{3}\mathbf{\mathbf{a}d}_{\mathbf{U}}$ & 
\\ \hline
\end{tabular}%
\caption{Directional derivatives of ${\bf P}_{i}({\bf X})={\bf ad}_{X}^{i}$,
and the sum of ${\bf P}_{i,j}$ in (\ref{PijRec}) for fixed $i=1,\ldots,4$.}%
\vspace{-2ex}\label{tabDP}%
\end{table}%

\begin{table}[ht] \centering%
\begin{tabular}{|l|l|l|}
\cline{1-2}\cline{3-3}
$k$ & $(\mathrm{D}_{\mathbf{X}}^{\left[ k\right] }\mathbf{dexp})(\mathbf{U})$
& $\frac{\partial ^{\left[ k\right] }}{\partial \mathbf{X}}\mathbf{dexp}_{%
\mathbf{X}}\mathbf{Z}$ \\ \cline{1-2}\cline{3-3}
0 & $\frac{1}{2}\mathbf{ad}_{\mathbf{U}}$ & $-\frac{1}{2}\mathbf{ad}_{%
\mathbf{Z}}$ \\ 
1 & $\frac{1}{2}\mathbf{ad}_{\mathbf{U}}+\frac{1}{6}(\mathbf{ad}_{\mathbf{U}}%
\mathbf{P}_{1}+\mathbf{P}_{1}\mathbf{ad}_{\mathbf{U}})$ & $-\frac{1}{2}%
\mathbf{ad}_{\mathbf{Z}}-\frac{1}{6}(\mathbf{ad}_{\mathbf{Z}_{1}}+\mathbf{P}%
_{1}\mathbf{ad}_{\mathbf{Z}})$ \\ 
2 & $(\mathrm{D}_{\mathbf{X}}^{\left[ 2\right] }\mathbf{dexp})(\mathbf{U})+%
\frac{1}{24}(\mathbf{ad}_{\mathbf{U}}\mathbf{P}_{2}+\mathbf{P}_{1}\mathbf{ad}%
_{\mathbf{U}}\mathbf{P}_{1}+\mathbf{P}_{2}\mathbf{ad}_{\mathbf{U}})$ & $%
\frac{\partial ^{\left[ 1\right] }}{\partial \mathbf{X}}\mathbf{dexp}_{%
\mathbf{X}}\mathbf{Z}-\frac{1}{24}(\mathbf{ad}_{\mathbf{Z}_{2}}+\mathbf{P}%
_{1}\mathbf{ad}_{\mathbf{Z}_{1}}+\mathbf{P}_{2}\mathbf{ad}_{\mathbf{Z}})$ \\ 
3 & $(\mathrm{D}_{\mathbf{X}}^{\left[ 3\right] }\mathbf{dexp})(\mathbf{U})+%
\frac{1}{120}\left( \mathbf{ad}_{\mathbf{U}}\mathbf{P}_{3}+\mathbf{P}_{2}%
\mathbf{ad}_{\mathbf{U}}\mathbf{P}_{1}+\mathbf{P}_{1}\mathbf{ad}_{\mathbf{U}}%
\mathbf{P}_{2}+\mathbf{P}_{3}\mathbf{ad}_{\mathbf{U}}\right) $ & $\frac{%
\partial ^{\left[ 2\right] }}{\partial \mathbf{X}}\mathbf{dexp}_{\mathbf{X}}%
\mathbf{Z}$ \\ \cline{1-2}\cline{3-3}
\end{tabular}%
\caption{Approximations of order $k=0,\ldots,3$ of the directional derivate of $\bf 
dexp$ and of the Jacobian of its evaluation map}\vspace{-2ex}\label{tabDdexp}%
\end{table}%

\begin{table}[h!] \centering%
\begin{tabular}{|l|l|l|}
\hline
$k$ & $(\mathrm{D}_{\mathbf{X}}^{\left[ k\right] }\mathbf{dexp}^{-1})(%
\mathbf{U})$ & $\frac{\partial ^{\left[ k\right] }}{\partial \mathbf{X}}%
\mathbf{dexp}_{\mathbf{X}}^{-1}\mathbf{Z}$ \\ \hline
0 & $-\frac{1}{2}\mathbf{ad}_{\mathbf{U}}$ & $\frac{1}{2}\mathbf{ad}_{%
\mathbf{Z}}$ \\ 
1 & $-\frac{1}{2}\mathbf{ad}_{\mathbf{U}}+\frac{1}{12}(\mathbf{ad}_{\mathbf{U%
}}\mathbf{P}_{1}+\mathbf{P}_{1}\mathbf{ad}_{\mathbf{U}})$ & $\frac{1}{2}%
\mathbf{ad}_{\mathbf{Z}}-\frac{1}{12}(\mathbf{ad}_{\mathbf{Z}_{1}}+\mathbf{P}%
_{1}\mathbf{ad}_{\mathbf{Z}})$ \\ 
3 & $(\mathrm{D}_{\mathbf{X}}^{\left[ 1\right] }\mathbf{dexp}^{-1})(\mathbf{U%
})-\frac{1}{720}\left( \mathbf{ad}_{\mathbf{U}}\mathbf{P}_{3}+\mathbf{P}_{2}%
\mathbf{ad}_{\mathbf{U}}\mathbf{P}_{1}+\mathbf{P}_{1}\mathbf{ad}_{\mathbf{U}}%
\mathbf{P}_{2}+\mathbf{P}_{3}\mathbf{ad}_{\mathbf{U}}\right) $ & $\frac{%
\partial ^{\left[ 1\right] }}{\partial \mathbf{X}}\mathbf{dexp}_{\mathbf{X}%
}^{-1}\mathbf{Z+}\frac{1}{720}\left( \mathbf{ad}_{\mathbf{Z}_{3}}+\mathbf{P}%
_{1}\mathbf{ad}_{\mathbf{Z}_{2}}+\mathbf{P}_{2}\mathbf{ad}_{\mathbf{Z}_{1}}+%
\mathbf{P}_{3}\mathbf{ad}_{\mathbf{Z}}\right) $ \\ \hline
\end{tabular}%
\caption{Approximations of order $k=0,1,3$ of the directional derivative
of ${\bf dexp}^{-1}$ of the Jacobian of its. \color[rgb]{0,0,0}Notice that the 1st-order is identical to 2nd-order approximation (not shown).}%
\label{tabDdexpInv}%
\end{table}%

\section{Approximation of the Second Derivative of Tangent Operator, and of
the Hessian of its Evaluation Map}

\label{secApprox2}

\subsection{Series expansions}

The second directional derivatives of the series expansion (\ref{Ddexp}) and
(\ref{DdexpInv}) of $\mathbf{dexp}$ and $\mathbf{dexp}^{-1}$, respectively,
along $\mathbf{U},\mathbf{S}\in {\mathbb{R}}^{n}$ are%
\begin{eqnarray}
\left( \mathrm{D}_{\mathbf{X}}^{2}\mathbf{dexp}\right) (\mathbf{U})(\mathbf{S%
}) &=&\sum_{i=2}^{\infty }\frac{1}{\left( i+1\right) !}\left( \mathrm{D}_{%
\mathbf{X}}^{2}\mathbf{P}_{i}\right) 
\hspace{-0.5ex}%
\left( \mathbf{U}\right) (\mathbf{S})  \label{DDdexp} \\
\left( \mathrm{D}_{\mathbf{X}}^{2}\mathbf{dexp}^{-1}\right) (\mathbf{U})(%
\mathbf{S}) &=&\sum_{i=2}^{\infty }\frac{B_{i}}{i!}\left( \mathrm{D}_{%
\mathbf{X}}^{2}\mathbf{P}_{i}\right) 
\hspace{-0.5ex}%
\left( \mathbf{U}\right) (\mathbf{S}).  \label{DDdexpInv}
\end{eqnarray}%
They again boil down to the second derivatives of $\mathbf{P}_{i}$ that are
determined with Lemma \ref{lemDerPi}, and given in (\ref{dPi}) for $%
i=1,\ldots ,4$. Postmultiplication of (\ref{DDdexp}) and (\ref{DDdexpInv})
with $\mathbf{Z}$ yields the second derivative of the vector functions $%
f\left( \mathbf{X}\right) =\mathbf{dexp}_{\mathbf{X}}\mathbf{Z}$ and $f^{%
\mathrm{inv}}\left( \mathbf{X}\right) =\mathbf{dexp}_{\mathbf{X}}^{-1}%
\mathbf{Z}$. In various applications, the Hessian of the scalar functions
defined as $h\left( \mathbf{X}\right) =\mathbf{Q}^{T}\mathbf{dexp}_{\mathbf{X%
}}\mathbf{Z}$ and $h^{\mathrm{inv}}\left( \mathbf{X}\right) =\mathbf{Q}^{T}%
\mathbf{dexp}_{\mathbf{X}}^{-1}\mathbf{Z}$, with some vector $\mathbf{Z},%
\mathbf{Q}\in {\mathbb{R}}^{n}$, are needed. The respective Hessian is the
matrix $\mathbf{H}=\left[ \frac{\partial ^{2}h}{\partial X_{i}\partial X_{j}}%
\right] $ and $\mathbf{H}^{\mathrm{inv}}=\left[ \frac{\partial ^{2}h^{%
\mathrm{inv}}}{\partial X_{i}\partial X_{j}}\right] $, such that $\left( 
\mathrm{D}_{\mathbf{X}}^{2}h\right) (\mathbf{U})(\mathbf{S})=\mathbf{U}^{T}%
\mathbf{HS}$ and $\left( \mathrm{D}_{\mathbf{X}}^{2}h\right) (\mathbf{U})(%
\mathbf{S})=\mathbf{U}^{T}\mathbf{H}^{\mathrm{inv}}\mathbf{S}$, respectively.

\begin{lemma}
\label{lemDer2}The second directional derivatives of $\mathbf{P}_{i}$ can be
expressed as%
\begin{equation}
\left( \mathrm{D}_{\mathbf{X}}^{2}\mathbf{P}_{i}\right) 
\hspace{-0.5ex}%
\left( \mathbf{U}\right) (\mathbf{S})\mathbf{Z}=\sum_{j=1}^{i-1}%
\sum_{l=0}^{j-1}\left( \mathbf{P}_{j,l}(\mathbf{X},\mathbf{ad}_{\mathbf{U}}%
\mathbf{P}_{i-j-1}(\mathbf{X})\mathbf{Z)S}\left. +\mathbf{P}_{i-j-1}(\mathbf{%
X})\mathbf{ad}_{\mathbf{U}}\mathbf{P}_{j,l}(\mathbf{X},\mathbf{Z})\mathbf{S}%
\right) \right) ,i\geq 2.  \label{D2P}
\end{equation}%
The Hessian $\mathbf{H}=\left[ \frac{\partial ^{2}h}{\partial X_{i}\partial
X_{j}}\right] $ and $\mathbf{H}^{\mathrm{inv}}=\left[ \frac{\partial ^{2}h^{%
\mathrm{inv}}}{\partial X_{i}\partial X_{j}}\right] $ of function $h\left( 
\mathbf{X}\right) :=\mathbf{Q}^{T}\mathbf{dexp_{\mathbf{X}}Z}$ and $h^{%
\mathrm{inv}}\left( \mathbf{X}\right) :=\mathbf{Q}^{T}\mathbf{dexp}_{\mathbf{%
X}}^{-1}\mathbf{Z}$, respectively, parameterized by $\mathbf{Z}$ and $%
\mathbf{Q}$, are%
\begin{equation}
\mathbf{H}\left( \mathbf{X}\right) =\sum_{i=2}^{\infty }\frac{1}{\left(
i+1\right) !}\mathbf{H}_{i}\left( \mathbf{X}\right) ,\ \ \mathbf{H}^{\mathrm{%
inv}}\left( \mathbf{X}\right) =\sum_{i=2}^{\infty }\frac{B_{i}}{i!}\mathbf{H}%
_{i}\left( \mathbf{X}\right)  \label{H}
\end{equation}%
where the Hessian $\mathbf{H}_{i}(\mathbf{X})$ of $\mathbf{Q}^{T}\mathbf{P}%
_{i}\left( \mathbf{X}\right) \mathbf{Z}$ is decomposed as $\mathbf{H}_{i}(%
\mathbf{X})=\overline{\mathbf{H}}_{i}(\mathbf{X})+\overline{\mathbf{H}}%
_{i}^{T}(\mathbf{X})$, and $\overline{\mathbf{H}}_{i}$ are defined with $%
\overline{\mathbf{ad}}_{\mathbf{Q}}$ in (\ref{adBar}) as%
\begin{equation}
\overline{\mathbf{H}}_{i}(\mathbf{X})=\sum_{j=1}^{i-1}\sum_{l=0}^{j-1}%
\overline{\mathbf{ad}}_{\overline{\mathbf{Q}}_{i-j-1}}\mathbf{P}_{l}\mathbf{%
ad}_{\mathbf{Z}_{j-l-1}}=-\sum_{j=1}^{i-1}\sum_{l=0}^{j-1}\overline{\mathbf{%
ad}}_{\overline{\mathbf{Q}}_{i-j-1}}\mathbf{P}_{j,l}(\mathbf{X,Z}),i\geq 2.
\label{Hess2}
\end{equation}
\end{lemma}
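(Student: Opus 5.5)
Three ingredients drive the plan: the first-derivative formula of Lemma \ref{lemDerPi}, the conversion of $\mathbf{ad}_{\mathbf{U}}$ acting on a fixed vector into a matrix acting on $\mathbf{U}$, and the identity $\mathbf{ad}_{\mathbf{X}}^{T}\mathbf{Q}=\overline{\mathbf{ad}}_{\mathbf{Q}}\mathbf{X}$ from (\ref{adBar}). The first part (\ref{D2P}) comes from differentiating the recursion of Lemma \ref{lemDerPi} once more, or equivalently from (\ref{d2PiGeneral}). The Hessian part is then obtained by sandwiching between $\mathbf{Q}^{T}$ and $\mathbf{Z}$ and using linearity of the series (\ref{DDdexp}), (\ref{DDdexpInv}).

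\emph{Step 1 (formula (\ref{D2P})).} Start from (\ref{DPi}), $(\mathrm{D}_{\mathbf{X}}\mathbf{P}_{i})(\mathbf{U})=\sum_{j}\mathbf{P}_{j}\mathbf{ad}_{\mathbf{U}}\mathbf{P}_{i-j-1}$. Differentiate along $\mathbf{S}$ with the product rule; $\mathbf{ad}_{\mathbf{U}}$ is constant. This gives terms $(\mathrm{D}_{\mathbf{X}}\mathbf{P}_{j})(\mathbf{S})\mathbf{ad}_{\mathbf{U}}\mathbf{P}_{i-j-1}+\mathbf{P}_{j}\mathbf{ad}_{\mathbf{U}}(\mathrm{D}_{\mathbf{X}}\mathbf{P}_{i-j-1})(\mathbf{S})$. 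Reindex the second family by $j\mapsto i-j-1$, which reproduces the structure of (\ref{d2PiGeneral}); the $j=0$ terms drop because $\mathrm{D}\mathbf{P}_0=0$. Now multiply by $\mathbf{Z}$. In the first family, set $\mathbf{W}:=\mathbf{ad}_{\mathbf{U}}\mathbf{P}_{i-j-1}\mathbf{Z}$. The proof of Lemma \ref{lemDerVec} shows that $(\mathrm{D}_{\mathbf{X}}\mathbf{P}_{j})(\mathbf{S})\mathbf{W}=\sum_{l=0}^{j-1}\mathbf{P}_{j,l}(\mathbf{X},\mathbf{W})\mathbf{S}$. Applying the same identity to the second family with $\mathbf{W}=\mathbf{Z}$ yields (\ref{D2P}) directly.

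\emph{Step 2 (Hessian of $\mathbf{Q}^{T}\mathbf{P}_{i}\mathbf{Z}$).} Take (\ref{D2P}) and left-multiply by $\mathbf{Q}^{T}$; the goal is to write each summand as $\mathbf{S}^{T}(\cdot)\mathbf{U}$. For the second family, $\mathbf{Q}^{T}\mathbf{P}_{i-j-1}\mathbf{ad}_{\mathbf{U}}\mathbf{Y}$ with $\mathbf{Y}=\mathbf{P}_{j,l}(\mathbf{X},\mathbf{Z})\mathbf{S}$ equals $\overline{\mathbf{Q}}_{i-j-1}^{T}\mathbf{ad}_{\mathbf{U}}\mathbf{Y}$. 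This is $\mathbf{Y}^{T}\mathbf{ad}_{\mathbf{U}}^{T}\overline{\mathbf{Q}}_{i-j-1}=\mathbf{Y}^{T}\overline{\mathbf{ad}}_{\overline{\mathbf{Q}}_{i-j-1}}\mathbf{U}$. Inserting $\mathbf{Y}$ gives $\mathbf{S}^{T}\mathbf{P}_{j,l}^{T}\overline{\mathbf{ad}}_{\overline{\mathbf{Q}}_{i-j-1}}\mathbf{U}$. Using $\overline{\mathbf{ad}}^{T}=-\overline{\mathbf{ad}}$, this is $-\mathbf{U}^{T}\overline{\mathbf{ad}}_{\overline{\mathbf{Q}}_{i-j-1}}\mathbf{P}_{j,l}\mathbf{S}$, i.e. the bilinear form of $\overline{\mathbf{H}}_{i}$ (or its transpose, depending on the pairing convention). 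The equality of the two expressions in (\ref{Hess2}) is just the definition (\ref{Pij}), $\mathbf{P}_{j,l}=-\mathbf{P}_{l}\mathbf{ad}_{\mathbf{Z}_{j-l-1}}$.

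\emph{Step 3 (symmetric partner and series).} The first family of (\ref{D2P}) must produce $\overline{\mathbf{H}}_{i}^{T}$. The cleanest route avoids direct manipulation. The second derivative of a smooth scalar function is symmetric in $(\mathbf{U},\mathbf{S})$. The first family is exactly the second family with the roles of $\mathbf{U}$ and $\mathbf{S}$ interchanged, which is visible from the symmetric form (\ref{d2PiGeneral}) after reindexing. Hence its bilinear form is $\mathbf{U}^{T}\overline{\mathbf{H}}_{i}^{T}\mathbf{S}$, and $\mathbf{H}_{i}=\overline{\mathbf{H}}_{i}+\overline{\mathbf{H}}_{i}^{T}$. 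Finally, (\ref{H}) follows by termwise application to (\ref{DDdexp}) and (\ref{DDdexpInv}), which is justified by absolute convergence of the series.

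The main obstacle is the bookkeeping in Steps 2--3. The dummy indices $j,l$ must match exactly the ranges in (\ref{Hess2}), and the sign and transpose conventions for $\overline{\mathbf{ad}}$ must be tracked consistently. If the $\mathbf{U}\leftrightarrow\mathbf{S}$ symmetry argument turns out not to match termwise, I would verify it explicitly for $i=2,3$ against (\ref{d2Pi}) and the $\mathbf{H}_{2i}$ of the preceding lemma, and then induct using (\ref{PijRec}).
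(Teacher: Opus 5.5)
Your proposal is correct and follows the paper's own proof. You differentiate (\ref{DPi}) once more to get (\ref{d2PiGeneral}), evaluate on $\mathbf{Z}$ through $\mathbf{P}_{j,l}$, premultiply by $\mathbf{Q}^{T}$, and then use $\mathbf{ad}_{\mathbf{U}}^{T}\overline{\mathbf{Q}}_{i-j-1}=\overline{\mathbf{ad}}_{\overline{\mathbf{Q}}_{i-j-1}}\mathbf{U}$ together with the skew symmetry of $\overline{\mathbf{ad}}$ to obtain $\overline{\mathbf{H}}_{i}$. The only difference is small: where the paper says the first family gives $\overline{\mathbf{H}}_{i}^{T}$ by ``a similar derivation,'' you note that it is exactly the second family with $\mathbf{U}$ and $\mathbf{S}$ interchanged (each family sums all words in $\mathbf{ad}_{\mathbf{X}}$ containing one $\mathbf{ad}_{\mathbf{U}}$ and one $\mathbf{ad}_{\mathbf{S}}$, in opposite order), which is valid on its own and makes the appeal to symmetry of second derivatives unnecessary.
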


\begin{proof}
The derivative of (\ref{P}) can be written as%
\begin{equation*}
\left( \mathrm{D}_{\mathbf{X}}^{2}\mathbf{P}_{i}\right) \left( \mathbf{U}%
\right) \left( \mathbf{S}\right) =\sum_{j=1}^{i-1}\left( \left( \mathrm{D}_{%
\mathbf{X}}\mathbf{P}_{j}\right) \left( \mathbf{S}\right) \mathbf{ad}_{%
\mathbf{U}}\mathbf{P}_{i-j-1}\left( \mathbf{X}\right) +\mathbf{P}%
_{i-j-1}\left( \mathbf{X}\right) \mathbf{ad}_{\mathbf{U}}\left( \mathrm{D}_{%
\mathbf{X}}\mathbf{P}_{j}\right) \left( \mathbf{S}\right) \right) .
\end{equation*}%
Evaluating this with $\mathbf{P}_{i}\mathbf{Z}$ and (\ref{Pij}) leads to (%
\ref{D2P}). Now consider the second term in (\ref{D2P}). Premultiplication
with $\mathbf{Q}^{T}$ gives%
\begin{align*}
& \mathbf{Q}^{T}\mathbf{P}_{i-j-1}\left( \mathbf{X}\right) \mathbf{ad}_{%
\mathbf{U}}\mathbf{P}_{j,l}\left( \mathbf{X},\mathbf{Z}\right) \mathbf{S}=-%
\mathbf{Q}^{T}\mathbf{P}_{i-j-1}\left( \mathbf{X}\right) \mathbf{ad}_{%
\mathbf{U}}\mathbf{ad}_{\mathbf{X}}^{l}\mathbf{ad}_{\mathbf{ad}_{\mathbf{X}%
}^{j-l-1}\mathbf{Z}}\mathbf{S} \\
& =-\left( \mathbf{ad}_{\mathbf{U}}^{T}\mathbf{P}_{i-j-1}^{T}\left( \mathbf{X%
}\right) \mathbf{Q}\right) ^{T}\mathbf{ad}_{\mathbf{X}}^{l}\mathbf{ad}_{%
\mathbf{ad}_{\mathbf{X}}^{j-l-1}\mathbf{Z}}\mathbf{S}=\mathbf{U}^{T}%
\overline{\mathbf{ad}}_{\mathbf{P}_{i-j-1}^{T}\left( \mathbf{X}\right) 
\mathbf{Q}}\mathbf{ad}_{\mathbf{X}}^{l}\mathbf{ad}_{\mathbf{ad}_{\mathbf{X}%
}^{j-l-1}\mathbf{Z}}\mathbf{S} \\
& =\mathbf{U}^{T}\overline{\mathbf{ad}}_{\overline{\mathbf{Q}}_{i-j-1}}%
\mathbf{P}_{l}\mathbf{ad}_{\mathbf{Z}_{j-l-1}}\mathbf{S}=\mathbf{Q}^{T}%
\overline{\mathbf{H}}_{i}(\mathbf{X})\mathbf{S}
\end{align*}%
with $\overline{\mathbf{H}}_{i}(\mathbf{X})$ in (\ref{Hess2}). A similar
derivation of the first term in (\ref{D2P}) yields $\overline{\mathbf{H}}%
_{i}^{T}(\mathbf{X})$.
\end{proof}

\subsection{Higher-Order Approximations}

Approximations $(\mathrm{D}_{\mathbf{X}}^{2\left[ k\right] }\mathbf{dexp})(%
\mathbf{U})(\mathbf{S})$ of order $k=0,1,2$ of the second directional
derivative $(\mathrm{D}_{\mathbf{X}}^{2}\mathbf{dexp})(\mathbf{U})(\mathbf{S}%
)$ are found from (\ref{DDdexp}) along with the explicit relations for $%
\left( \mathrm{D}_{\mathbf{X}}^{2}\mathbf{P}_{i}\right) $ in (\ref{d2Pi}),
and from (\ref{DDdexpInv}) for the inverse. In view of (\ref{d2Pi}), the $k$%
th-order approximation involves the terms with $\left( \mathrm{D}_{\mathbf{X}%
}^{2}\mathbf{P}_{i}\right) $ 
\color[rgb]{0,0,0}%
for $i=0,\ldots ,k+2$%
\color{black}%
. Because the odd Bernoulli numbers $B_{i}$ are zero, there are no
approximations of $(\mathrm{D}_{\mathbf{X}}^{2}\mathbf{dexp}^{-1})(\mathbf{U}%
)(\mathbf{S})$ of order $k=1,3,5,\ldots $. Approximations up to 2nd-order
are summarized in table \ref{tabD2dexp} and \ref{tabD2dexpInv},
respectively. The second derivative of the evaluation map is obtained by
postmultiplying $(\mathrm{D}_{\mathbf{X}}^{2\left[ k\right] }\mathbf{dexp})(%
\mathbf{U})(\mathbf{S})$ with $\mathbf{Z}$.

Low-order approximations of the Hessian are constructed with the matrices $%
\overline{\mathbf{H}}_{i}$ according to (\ref{Hess2}). The necessary $%
\sum_{l=0}^{j-1}\mathbf{P}_{i-j-1,l}$ are available from table \ref{tabDP}.
Matrices $\overline{\mathbf{H}}_{i},i=0,1,2$ are listed in table \ref%
{tabHessian}. The corresponding approximations of the Hessian of $h$ and $h^{%
\mathrm{inv}}$ are constructed with (\ref{H}). With these relations, the
approximations $\mathbf{H}^{\left[ k\right] }\left( \mathbf{X}\right) $ of
order $k=0,\ldots ,2$ are (omitting argument $\mathbf{X}$)%
\begin{eqnarray}
\mathbf{H}^{\left[ 0\right] } &=&\frac{1}{6}(\overline{\mathbf{H}}_{2}+%
\overline{\mathbf{H}}_{2}^{T}),\ \mathbf{H}^{\left[ 1\right] }=\mathbf{H}^{%
\left[ 0\right] }+\frac{1}{24}(\overline{\mathbf{H}}_{3}+\overline{\mathbf{H}%
}_{3}^{T}),\ \mathbf{H}^{\left[ 3\right] }=\mathbf{H}^{\left[ 2\right] }+%
\frac{1}{120}(\overline{\mathbf{H}}_{4}+\overline{\mathbf{H}}_{4}^{T}) \\
\mathbf{H}^{\mathrm{inv}\left[ 0\right] } &=&\frac{1}{12}(\overline{\mathbf{H%
}}_{2}+\overline{\mathbf{H}}_{2}^{T}),\ \ \mathbf{H}^{\mathrm{inv}\left[ 2%
\right] }=\mathbf{H}^{\mathrm{inv}\left[ 0\right] }-\frac{1}{720}(\overline{%
\mathbf{H}}_{4}+\overline{\mathbf{H}}_{4}^{T}).
\end{eqnarray}%
\begin{table}[ht] \centering%
\begin{tabular}{|l|l|}
\hline
$k$ & $(\mathrm{D}_{\mathbf{X}}^{2\left[ k\right] }\mathbf{dexp})(\mathbf{U}%
)(\mathbf{S})$ \\ \hline
0 & $\frac{1}{6}(\mathbf{ad}_{\mathbf{S}}\mathbf{ad}_{\mathbf{U}}+\mathbf{ad}%
_{\mathbf{U}}\mathbf{ad}_{\mathbf{S}})$ \\ 
1 & $\frac{1}{6}(\mathbf{ad}_{\mathbf{S}}\mathbf{ad}_{\mathbf{U}}+\mathbf{ad}%
_{\mathbf{U}}\mathbf{ad}_{\mathbf{S}})+\frac{1}{24}(\mathbf{P}_{1}\mathbf{ad}%
_{\mathbf{S}}+\mathbf{ad}_{\mathbf{S}}\mathbf{P}_{1})\mathbf{ad}_{\mathbf{U}%
}+\frac{1}{24}(\mathbf{ad}_{\mathbf{U}}\mathbf{ad}_{\mathbf{S}}+\mathbf{ad}_{%
\mathbf{S}}\mathbf{ad}_{\mathbf{U}})\mathbf{P}_{1})+\frac{1}{24}(\mathbf{ad}%
_{\mathbf{U}}\mathbf{P}_{1}+\mathbf{P}_{1}\mathbf{ad}_{\mathbf{U}})\mathbf{ad%
}_{\mathbf{S}}$ \\ 
2 & $(\mathrm{D}_{\mathbf{X}}^{2\left[ 1\right] }\mathbf{dexp})(\mathbf{U})(%
\mathbf{S})+\frac{1}{120}\mathbf{ad}_{\mathbf{S}}(\mathbf{P}_{2}\mathbf{ad}_{%
\mathbf{U}}+\mathbf{ad}_{\mathbf{U}}\mathbf{P}_{2}+\mathbf{P}_{1}\mathbf{ad}%
_{\mathbf{U}}\mathbf{P}_{1})\mathbf{+}\frac{1}{120}\mathbf{ad}_{\mathbf{U}%
}\left( \mathbf{P}_{2}\mathbf{ad}_{\mathbf{S}}+\mathbf{ad}_{\mathbf{S}}%
\mathbf{P}_{2}+\mathbf{P}_{1}\mathbf{ad}_{\mathbf{S}}\mathbf{P}_{1}\right) $
\\ 
& $\ \ \ \ \ \ \ \ \ \ \ \ \ \ \ \ \ \ \ \ \ \ \ \ \ \ \ \ \ \ +\frac{1}{120}%
\mathbf{P}_{2}(\mathbf{ad}_{\mathbf{S}}\mathbf{ad}_{\mathbf{U}}+\mathbf{ad}_{%
\mathbf{U}}\mathbf{ad}_{\mathbf{S}})\mathbf{+}\frac{1}{120}\mathbf{P}_{1}(%
\mathbf{ad}_{\mathbf{S}}\left( \mathbf{P}_{1}\mathbf{ad}_{\mathbf{U}}+%
\mathbf{ad}_{\mathbf{U}}\mathbf{P}_{1}\right) +\mathbf{ad}_{\mathbf{U}%
}\left( \mathbf{P}_{1}\mathbf{ad}_{\mathbf{S}}+\mathbf{ad}_{\mathbf{S}}%
\mathbf{P}_{1}\right) )$ \\ \hline
\end{tabular}%
\caption{Approximations of order $k=0,1,2$ of the second directional derivatives 
of $(\mathrm{D}_{\mathbf{X}}^{2}\mathbf{dexp})(\mathbf{U})(\mathbf{S})$.}%
\label{tabD2dexp}%
\end{table}%

\begin{table}[ht] \centering%
\begin{tabular}{|l|l|}
\hline
$k$ & $(\mathrm{D}_{\mathbf{X}}^{2\left[ k\right] }\mathbf{dexp}^{-1})(%
\mathbf{U})(\mathbf{S})$ \\ \hline
0 & $\frac{1}{12}(\mathbf{ad}_{\mathbf{S}}\mathbf{ad}_{\mathbf{U}}+\mathbf{ad%
}_{\mathbf{U}}\mathbf{ad}_{\mathbf{S}})$ \\ 
2 & $(\mathrm{D}_{\mathbf{X}}^{2\left[ 1\right] }\mathbf{dexp}^{-1})(\mathbf{%
U})(\mathbf{S})-\frac{1}{720}\mathbf{ad}_{\mathbf{S}}(\mathbf{P}_{2}\mathbf{%
ad}_{\mathbf{U}}+\mathbf{ad}_{\mathbf{U}}\mathbf{P}_{2}+\mathbf{P}_{1}%
\mathbf{ad}_{\mathbf{U}}\mathbf{P}_{1})-\frac{1}{720}\mathbf{ad}_{\mathbf{U}%
}\left( \mathbf{P}_{2}\mathbf{ad}_{\mathbf{S}}+\mathbf{ad}_{\mathbf{S}}%
\mathbf{P}_{2}+\mathbf{P}_{1}\mathbf{ad}_{\mathbf{S}}\mathbf{P}_{1}\right) $
\\ 
& $\ \ \ \ \ \ \ \ \ \ \ \ \ \ \ \ \ \ \ \ \ \ \ \ \ \ \ \ \ \ \ \ \ \ -%
\frac{1}{720}\mathbf{P}_{2}(\mathbf{ad}_{\mathbf{S}}\mathbf{ad}_{\mathbf{U}}+%
\mathbf{ad}_{\mathbf{U}}\mathbf{ad}_{\mathbf{S}})-\frac{1}{720}\mathbf{P}%
_{1}(\mathbf{ad}_{\mathbf{S}}\left( \mathbf{P}_{1}\mathbf{ad}_{\mathbf{U}}+%
\mathbf{ad}_{\mathbf{U}}\mathbf{P}_{1}\right) +\mathbf{ad}_{\mathbf{U}%
}\left( \mathbf{P}_{1}\mathbf{ad}_{\mathbf{S}}+\mathbf{ad}_{\mathbf{S}}%
\mathbf{P}_{1}\right) )$ \\ \hline
\end{tabular}%
\caption{Approximations of order $k=0,2$ of the second directional derivatives 
of $(\mathrm{D}_{\mathbf{X}}^{2}\mathbf{dexp}^{-1})(\mathbf{U})(\mathbf{S})$. Notice that there is no 1st-order approximation.}%
\label{tabD2dexpInv}%
\end{table}%

\begin{table}[ht] \centering%
\begin{tabular}{|l|l|}
\hline
$i$ & $\overline{\mathbf{H}}_{i}(\mathbf{X})$ \\ \hline
0 & $\overline{\mathbf{ad}}_{\mathbf{Q}}\mathbf{ad}_{\mathbf{Z}}$ \\ 
1 & $\overline{\mathbf{ad}}_{\mathbf{Q}}\left( \mathbf{P}_{1}\mathbf{ad}_{%
\mathbf{Z}}+\mathbf{ad}_{\mathbf{Z}_{1}}\right) +\overline{\mathbf{ad}}_{%
\overline{\mathbf{Q}}_{1}}\mathbf{ad}_{\mathbf{Z}}$ \\ 
2 & $\overline{\mathbf{ad}}_{\mathbf{Q}}\left( \mathbf{P}_{2}\mathbf{ad}_{%
\mathbf{Z}}+\mathbf{P}_{1}\mathbf{ad}_{\mathbf{Z}_{1}}+\mathbf{ad}_{\mathbf{Z%
}_{2}}\right) +\overline{\mathbf{ad}}_{\overline{\mathbf{Q}}_{1}}\left( 
\mathbf{P}_{1}\mathbf{ad}_{\mathbf{Z}}+\mathbf{ad}_{\mathbf{Z}_{1}}\right) +%
\overline{\mathbf{ad}}_{\overline{\mathbf{Q}}_{2}}\mathbf{ad}_{\mathbf{Z}}$
\\ \hline
\end{tabular}%
\caption{Explicit expressions for the matrix
$\overline{\mathbf{H}}_{i}(\mathbf{X})$ in (\ref{Hess2}) for $i=2,\ldots,4$.}%
\label{tabHessian}%
\end{table}%

\subsection{Robust Evaluation exploiting Local Approximations}

\label{secApprox}

All closed form expressions potentially exhibit numerical issues when
evaluate at or near $\left\Vert \mathbf{x}\right\Vert =\mathbf{0}$, which is
due to the non-simply connectedness of $SO(3)$, caused by division of $%
\varphi =\left\Vert \mathbf{x}\right\Vert $ and its powers. The standard
approach to tackle this issue is to switch to the respective limit value
when $\left\Vert \mathbf{x}\right\Vert \leq \varepsilon $ for a specified
threshold $\epsilon $. This tends to introduce discontinuities, however. As
an alternative, it was proposed in \cite{TodescoBruls2023} to switch to the
series expansion when the exponential is evaluated as $\exp \tilde{\mathbf{x}%
}=\mathbf{I}+\frac{\sin \varphi }{\varphi }\tilde{\mathbf{x}}+\frac{1-\cos
\varphi }{\varphi ^{2}}\tilde{\mathbf{x}}^{2}$ and its differential as $%
\mathbf{dexp}_{\mathbf{x}}=\mathbf{I}+\frac{1-\cos \varphi }{\varphi ^{2}}%
\tilde{\mathbf{x}}+\frac{1-\frac{1}{\varphi }\sin \varphi }{\varphi ^{2}}%
\tilde{\mathbf{x}}^{2}$, rather than using the robust formulations (\ref%
{expSO3}) and (\ref{dexpSO32}). A computationally robust approach, however,
consists in switching between the explicit expressions and a higher-order
local approximation. For example, the exact derivative (\ref{DdexpSE3}) is
replaced by its $k$th-order approximation in Tab. \ref{tabDdexp} when $%
\left\Vert \mathbf{x}\right\Vert \leq \epsilon $. The threshold $\epsilon $,
that remains to by specified, depends on the application.

To investigate the actual accuracy of the presented $k$th-order
approximations, numerical results for the approximations errors are shown
for increasing $\left\Vert \mathbf{x}\right\Vert $. To this end, introduce
the screw coordinate vector $\mathbf{X}\left( s\right) =s\left[ 
\begin{smallmatrix}
\mathbf{n} \\ 
\mathbf{y}%
\end{smallmatrix}%
\right] $, where $\mathbf{n}:=\mathbf{x/}\left\Vert \mathbf{x}\right\Vert $
is a unit vector defined by the arbitrary vectors $\mathbf{x}=\left[
0.3,-0.4,1\right] $ and $\mathbf{y}=\left[ 0.1,0.2,-0.4\right] $. Using the
unit vector $\mathbf{n}$ allows controlling the norm $\left\Vert \mathbf{x}%
\right\Vert $. Further, arbitrary vectors $\mathbf{U}=\left[
0.1,1.0,-1.0,0.9,0.5,0.3\right] $ and $\mathbf{S}=\left[
1.7,-2.9,-9.2,7.6,6.7,2.4\right] $ are introduced.

The errors $\varepsilon ^{\left[ k\right] }\left( s\right) :=%
\big\|%
\mathbf{dexp}_{\mathbf{X}\left( s\right) }^{\left[ k\right] }-\mathbf{dexp}_{%
\mathbf{X}\left( s\right) }%
\big\|%
$ of the $k$th-order approximation for $s=0,\ldots ,0.1$, and when
approximating the inverse, are shown in Fig. \ref{figdexpApprox}, where $%
\left\Vert \cdot \right\Vert $ is the matrix $L_{2}$ norm. The errors $%
\varepsilon _{1}^{\left[ k\right] }\left( s\right) :=%
\big\|%
(\mathrm{D}_{\mathbf{X}\left( s\right) }^{\left[ k\right] }\mathbf{dexp}%
)\left( \mathbf{U}\right) -(\mathrm{D}_{\mathbf{X}\left( s\right) }\mathbf{%
dexp})\left( \mathbf{U}\right) 
\big\|%
$ of the $k$th-order approximation of the first derivative are shown in Fig. %
\ref{figDdexpApprox013}, where $\left\Vert \cdot \right\Vert $ is the
Euclidean vector norm. The errors $\varepsilon _{2}^{\left[ k\right] }\left(
s\right) :=%
\big\|%
(\mathrm{D}_{\mathbf{X}\left( s\right) }^{2\left[ k\right] }\mathbf{dexp}%
)\left( \mathbf{U}\right) \left( \mathbf{S}\right) -(\mathrm{D}_{\mathbf{X}%
\left( s\right) }^{2}\mathbf{dexp})\left( \mathbf{U}\right) \left( \mathbf{S}%
\right) 
\big\|%
$ made by approximating the second derivative are shown in Fig. \ref%
{figD2dexpApprox012}, where again $\left\Vert \cdot \right\Vert $ is the
matrix $L_{2}$ norm. Also shown are results for the inverse $\mathbf{dexp}%
^{-1}$ and derivatives, denoted with $\varepsilon ^{\left[ k\right] }$. The
error measures allow assessing the accuracy despite their scale dependence
since there is no bi-invariant metric on $SE(3)$. Notice that there is no
3rd-order approximation of $\left( \mathrm{D}_{\mathbf{X}}\mathbf{dexp}%
\right) \left( \mathbf{U}\right) $, and no 2nd-order approximation of $(%
\mathrm{D}_{\mathbf{X}}^{2}\mathbf{dexp})\left( \mathbf{U}\right) \left( 
\mathbf{Z}\right) $ since the Bernoulli number $B_{3}$ is zero.

An interesting observation is that approximations of the inverse are more
accurate than approximations of $\mathbf{dexp}$ and its derivatives. This is
advantageous for instance when Munthe-Kaas \cite%
{MuntheKaas-BIT1998,IserlesMuntheKaasNrsettZanna2000} or generalized-$\alpha 
$ methods \cite{BrulsCardonaArnold2012} are applied to solve the kinematic
reconstruction equation, which is relevant for Lie group integration of MBS
model, where solutions of the ODE $\dot{\mathbf{X}}=\mathbf{dexp}_{\mathbf{X}%
}^{-1}\mathbf{V}$ on $SE(3)$ are to be computed, and using higher-order
approximations within the integration schemes was shown to be sufficient. 
\begin{figure}[h]
\begin{center}
\includegraphics[draft=false,height=4.5cm]{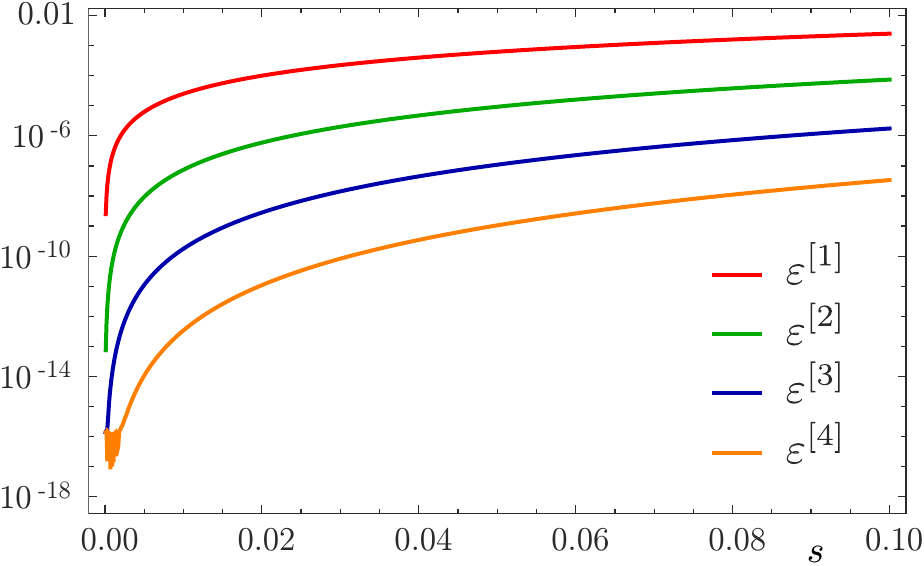}~ %
\includegraphics[draft=false,height=4.5cm]{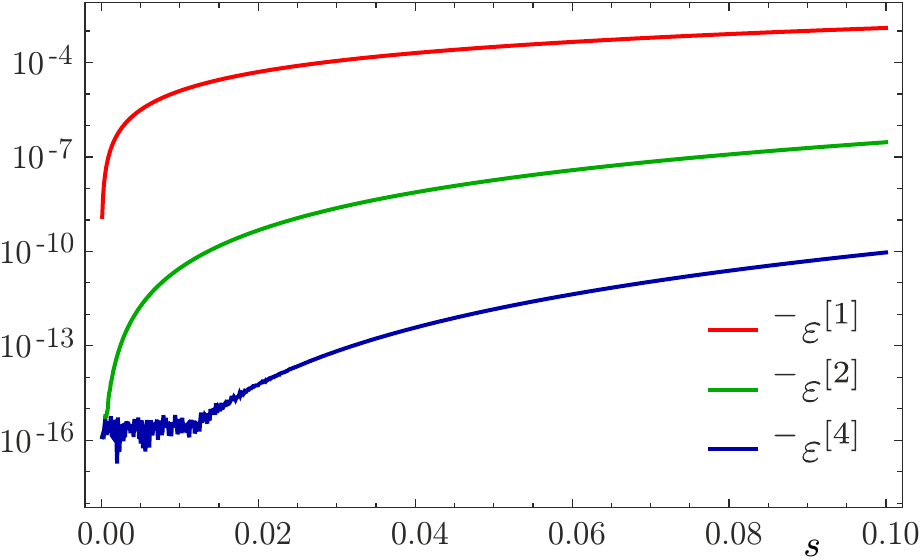} 
\vspace{-2ex}
\end{center}
\caption{Errors $\protect\varepsilon ^{\left[ k\right] }\left( s\right) $ of 
$k$th-order approximation of $\mathbf{dexp}_{\mathbf{X}\left( s\right) }$,
and errors $\protect\varepsilon _{-}^{\left[ k\right] }\left( s\right) $ of $%
\mathbf{dexp}_{\mathbf{X}\left( s\right) }^{-1}$.}
\label{figdexpApprox}
\end{figure}
\begin{figure}[h]
\begin{center}
\includegraphics[draft=false,height=4.5cm]{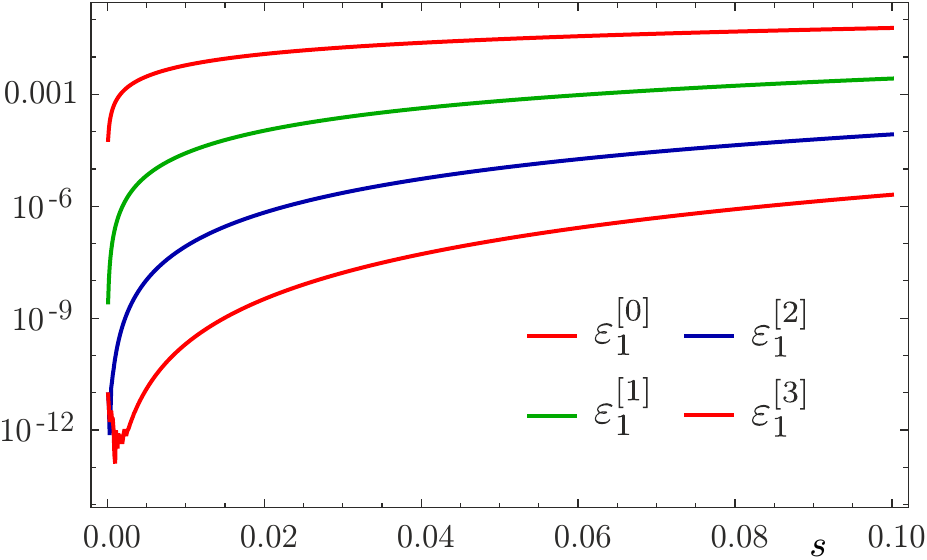}~ %
\includegraphics[draft=false,height=4.5cm]{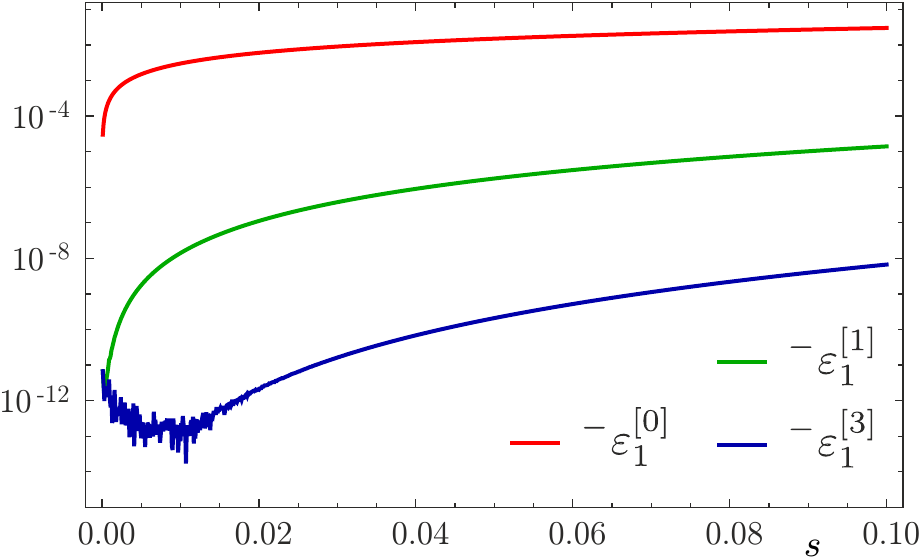} 
\vspace{-2ex}
\end{center}
\caption{Errors $\protect\varepsilon _{1}^{\left[ k\right] }\left( s\right) $
of $k$th-order approximation of $(\mathrm{D}_{\mathbf{X}\left( s\right) }%
\mathbf{dexp})\left( \mathbf{U}\right) $ and its inverse.}
\label{figDdexpApprox013}
\end{figure}
\begin{figure}[h]
\begin{center}
\includegraphics[draft=false,height=4.5cm]{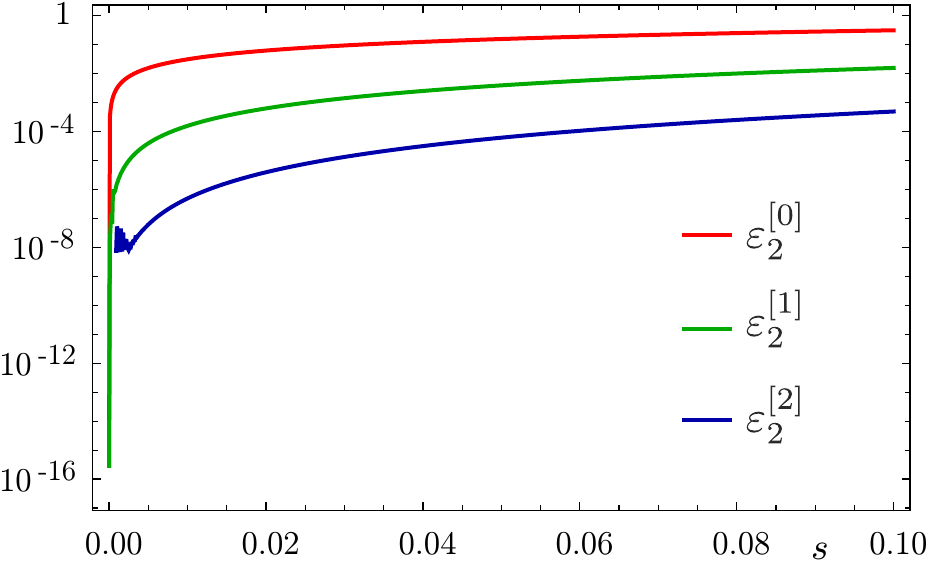}~ %
\includegraphics[draft=false,height=4.5cm]{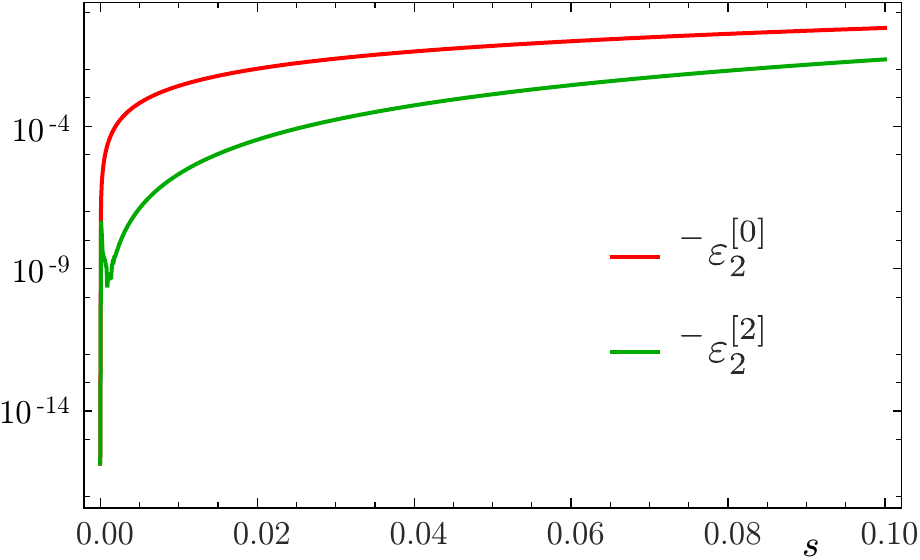} 
\vspace{-2ex}
\end{center}
\caption{Errors $\protect\varepsilon _{2}^{\left[ k\right] }\left( s\right) $
of $k$th-order approximation of $(\mathrm{D}_{\mathbf{X}\left( s\right) }^{2}%
\mathbf{dexp})\left( \mathbf{U}\right) \left( \mathbf{S}\right) $, and its
inverse.}
\label{figD2dexpApprox012}
\end{figure}
\newpage

\section{Application Example: Deformation of a Cosserat Rod}

\label{secCosserat}

The deformation of a straight homogenous rubber rod with a constant
rectangular cross section of $8\times 8\,$mm$^{2}$ and $L=100$\thinspace mm
length is considered. It is modeled as Cosserat-Simo-Reissner rod (see Sec. %
\ref{secApplications}). The deformation field $\boldsymbol{\chi }=\left[ 
\begin{smallmatrix}
\boldsymbol{\kappa } \\ 
\boldsymbol{\rho }%
\end{smallmatrix}%
\right] $ is determined by the kinematic reconstruction equation (\ref%
{RecSE3}), where $\boldsymbol{\kappa }$ describe bending and torsion, and $%
\boldsymbol{\rho }$ shear and compression. In this example, the cross
section displacement, i.e. $\mathbf{X}\left( \tau \right) $, is prescribed.
To obtain a realistic situation, the Rodrigues vector $\mathbf{x}$ and the
vector $\boldsymbol{\rho }$ are prescribed as%
\begin{eqnarray*}
\mathbf{x}\left( \tau \right) &=&\left[ \tfrac{1}{2}\sin \left( 2\pi \tau
\right) ,\tfrac{1}{2}\cos \left( \pi \tau \right) ,\tfrac{1}{2}\sin \left(
2\pi \tau \right) \right] ^{T} \\
\boldsymbol{\rho }\left( \tau \right) &=&\left[ \cos \left( \tfrac{1}{10}%
\sin (2\pi \tau )\right) ,\sin \left( \tfrac{1}{10}\sin (2\pi \tau )\right)
\cos (\sin (2\pi \tau )),\sin \left( \tfrac{1}{10}\sin (2\pi \tau )\right)
\sin (\sin (2\pi \tau ))\right] ^{T}.
\end{eqnarray*}%
The angular deformation $\boldsymbol{\kappa }$ is computed as $\tilde{%
\boldsymbol{\kappa }}=\mathbf{R}^{T}\mathbf{R}^{\prime }$. According to the
definition $\hat{\boldsymbol{\chi }}=\mathbf{H}^{-1}\mathbf{H}^{\prime }$,
the translation $\mathbf{r}\left( \tau \right) $ is found by quadrature of $%
\mathbf{r}^{\prime }=\mathbf{R}\boldsymbol{\rho }$. The reference screw
coordinates $\mathbf{X}_{\mathrm{ref}}\left( \tau \right) $ are then
obtained with the logarithm on $SE(3)$. Fig. \ref{figRodView} shows the
corresponding shape of the rod. From this the exact reference values $%
\boldsymbol{\chi }_{\mathrm{ref}},\boldsymbol{\chi }_{\mathrm{ref}}^{\prime
} $, and $\boldsymbol{\chi }_{\mathrm{ref}}^{\prime \prime }$ are computed.
The actual numerical values presented in the following are not of interest.
The example rather serves to show the applicability of the presented
formulae. 
\begin{figure}[h]
\begin{center}
\includegraphics[draft=false,height=4.6cm]{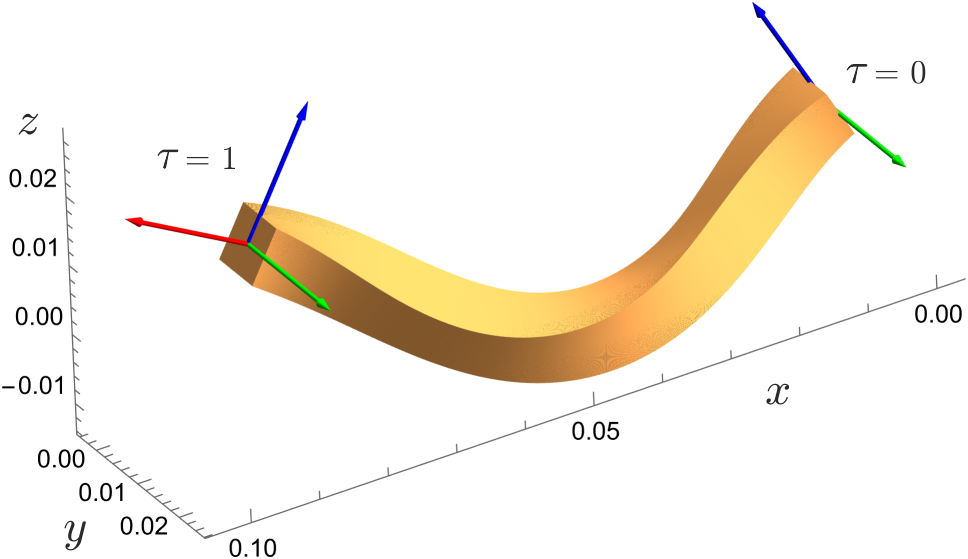}
\end{center}
\caption{Shape of the rubber rod when the displacement of the cross section
frame is according to the prescribed $\mathbf{X}\left( \protect\tau \right) $%
. Shown is the cross section frame at the start and end of the rod.}
\label{figRodView}
\end{figure}

\subsection{Deformation Field}

First the kinematic problem of computing the deformations from the
displacement described by the screw coordinates as function of $\tau =\left[
0,1\right] $ is considered. Note that $\mathbf{x}\left( \tau \right) =%
\mathbf{0}$ at $\tau =0.5$. The first and second derivative of the
deformation field is computed with $\boldsymbol{\chi }^{\prime }=\mathbf{dexp%
}_{-\mathbf{X}}\mathbf{X}^{\prime \prime }-\left( \mathrm{D}_{-\mathbf{X}}%
\mathbf{dexp}\right) (\mathbf{X}^{\prime })\mathbf{X}^{\prime }$ and $%
\boldsymbol{\chi }^{\prime \prime }=\mathbf{dexp}_{-\mathbf{X}}\mathbf{X}%
^{\prime \prime \prime }-2(\mathrm{D}_{\mathbf{X}}\mathbf{dexp})(\mathbf{X}%
^{\prime })\mathbf{X}^{\prime \prime }-(\mathrm{D}_{\mathbf{X}}\mathbf{dexp}%
)(\mathbf{X}^{\prime \prime })\mathbf{X}^{\prime }+(\mathrm{D}_{\mathbf{X}%
}^{2}\mathbf{dexp})(\mathbf{X}^{\prime })(\mathbf{X}^{\prime })\mathbf{X}%
^{\prime }$. Fig. \ref{RodChi} shows the components of $\boldsymbol{\kappa }$
and $\boldsymbol{\rho }$. Clearly visible are the artifacts at $\tau =0.5$,
where $\mathbf{x}=\mathbf{0}$, due to the division by $\left\Vert \mathbf{x}%
\right\Vert =\varphi $ and powers thereof. This issue is readily addressed
by switching to a higher-order approximation of the respective map presented
in Sec. \ref{secApprox1} and \ref{secApprox2} when$\left\Vert \mathbf{x}%
\right\Vert \leq \epsilon $. Using $\epsilon =10^{-5}$ and second-order
local approximations yields numerically smooth results that match the exact
values with computation precision.

The effect of different thresholds for switching between the exact and
approximate relations is demonstrated for computing the first derivative.
Fig. \ref{figDChiApproxDiff} shows the error when the above closed form
expression for $\boldsymbol{\chi }^{\prime }\left( s\right) $ is replaced by
the $k$th-order approximation $\boldsymbol{\chi }^{\prime \left[ k\right] }=%
\mathbf{dexp}_{-\mathbf{X}^{\left[ k\right] }}\mathbf{X}^{\prime \prime %
\left[ k\right] }-\left( \mathrm{D}_{-\mathbf{X}^{\left[ k\right] }}\mathbf{%
dexp}\right) (\mathbf{X}^{\prime \left[ k\right] })\mathbf{X}^{\prime \left[
k\right] }$ if $\left\Vert \mathbf{x}\right\Vert \leq \epsilon $, for the
two rather large threshold values $\epsilon =10^{-2}$ and $\epsilon =10^{-3}$%
. The error is computed as $\varepsilon _{\mathrm{switch}}^{\left[ k\right]
}\left( s\right) :=%
\big\|%
\boldsymbol{\chi }_{\mathrm{ref}}^{\prime }\left( s\right) -\boldsymbol{\chi 
}^{\prime \left[ k\right] }\left( s\right) 
\big\|%
$ if $\left\Vert \mathbf{x}\right\Vert \leq \epsilon $, and $\varepsilon _{%
\mathrm{switch}}^{\left[ k\right] }\left( s\right) :=%
\big\|%
\boldsymbol{\chi }_{\mathrm{ref}}^{\prime }\left( s\right) -\boldsymbol{\chi 
}^{\prime }\left( s\right) 
\big\|%
$ if $\left\Vert \mathbf{x}\right\Vert >\epsilon $. It is apparent from Fig. %
\ref{figDChiApproxDiff} that the $k$th-order approximation near $\mathbf{x}%
\left( 0.5\right) =\mathbf{0}$ yield $k$th-order convergence. For smaller
thresholds $\epsilon $, a second-order approximation suffices to obtain
numerically exact and continuous results. 
\begin{figure}[h]
\begin{center}
\includegraphics[draft=false,height=4.5cm]{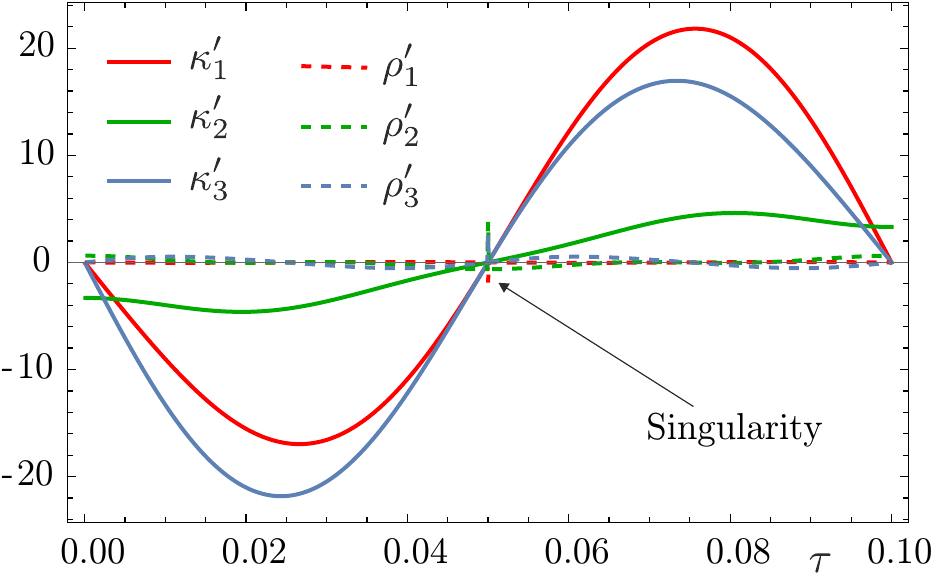}~ %
\includegraphics[draft=false,height=4.5cm]{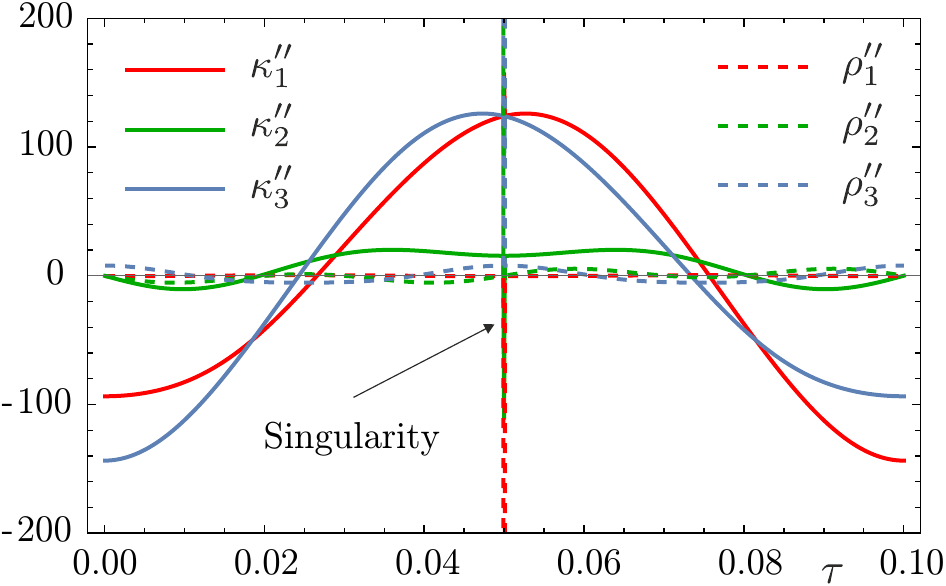} \vspace{%
-2ex}
\end{center}
\caption{First and second derivative of the rod deformation when computed
with (\protect\ref{DdexpSE3}) and (\protect\ref{d2exp}).}
\label{RodChi}
\end{figure}
\begin{figure}[h]
\begin{center}
\includegraphics[draft=false,height=3.9cm]{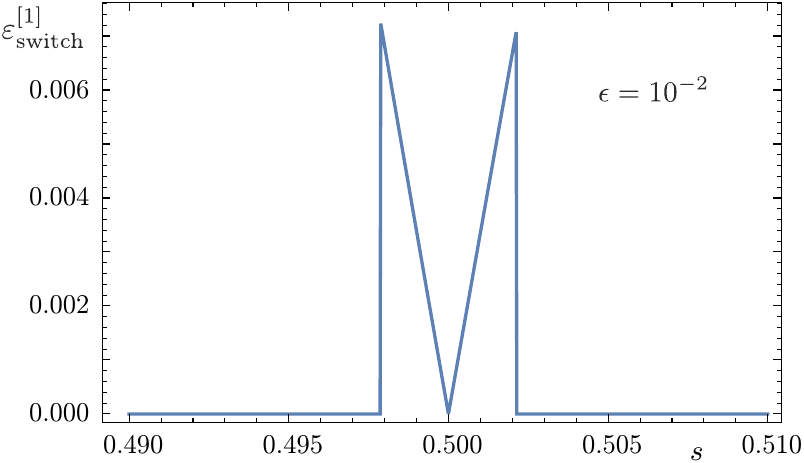}%
~ %
\includegraphics[draft=false,height=3.9cm]{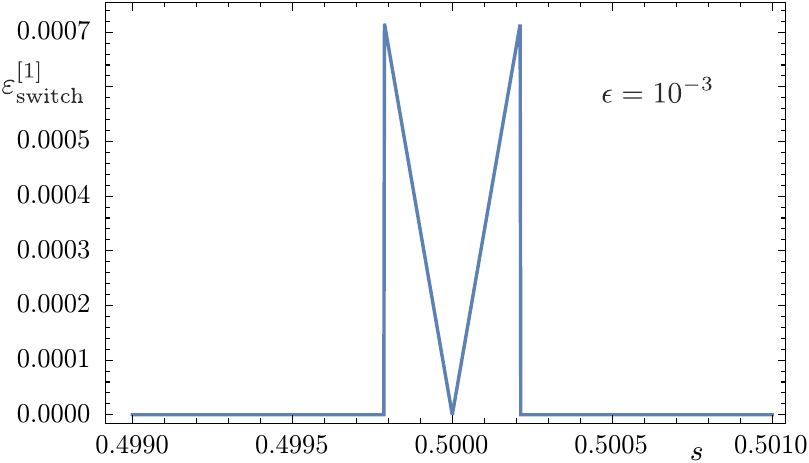}%
\\[0pt]
\includegraphics[draft=false,height=3.9cm]{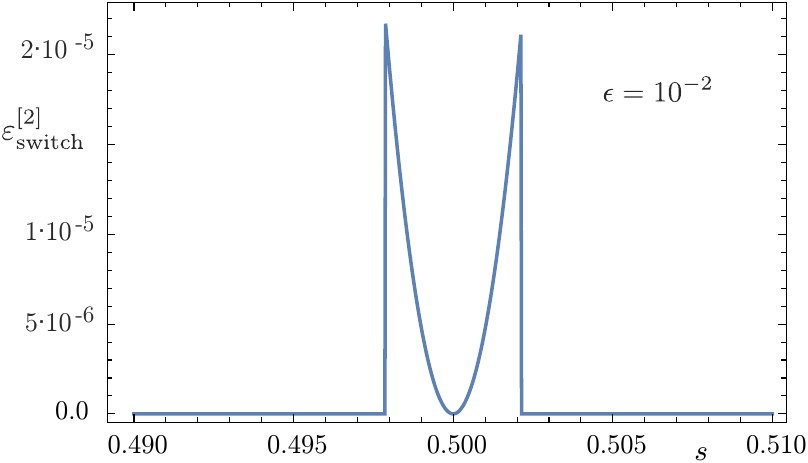}%
~ %
\includegraphics[draft=false,height=3.9cm]{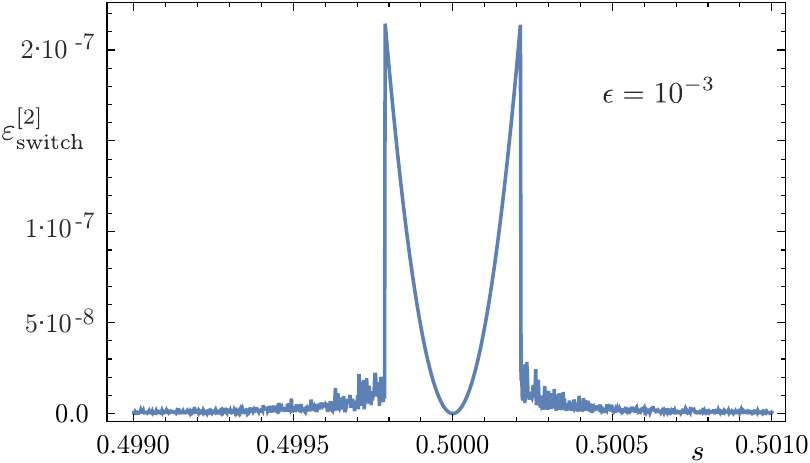}%
\\[0pt]
\includegraphics[draft=false,height=3.9cm]{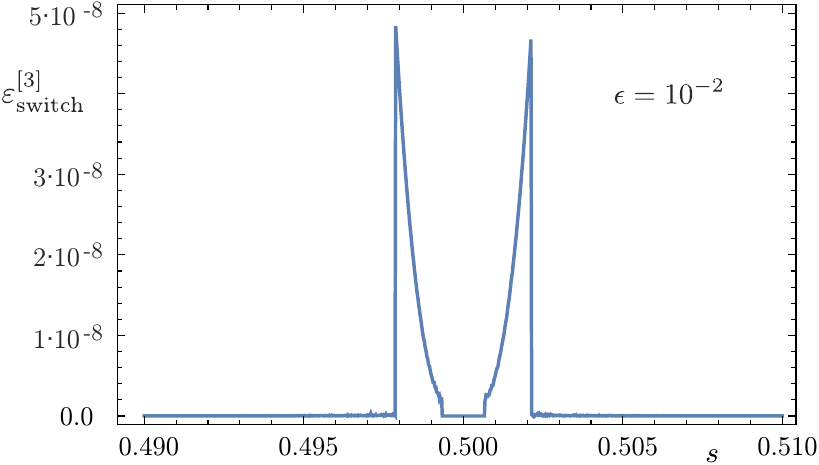}%
~ %
\includegraphics[draft=false,height=3.9cm]{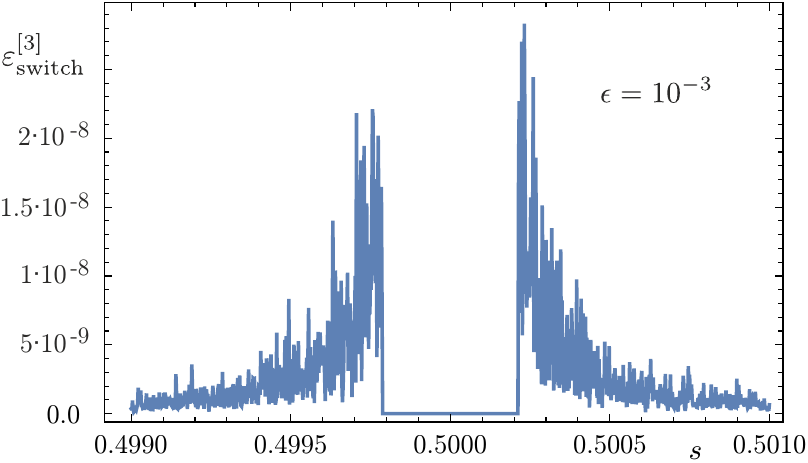}%
\vspace{-2ex}
\end{center}
\caption{Error $\protect\varepsilon _{\mathrm{switch}}^{\left[ k\right]
}\left( s\right) $ when switching between the exact computation of $%
\boldsymbol{\protect\chi }^{\prime }\left( s\right) $ and the $k$th-order
approximations if $\left\Vert \mathbf{x}\right\Vert \leq \protect\epsilon $,
with $\protect\epsilon =10^{-2}$ (left) and $\protect\epsilon =10^{-3}$
(right). Only shown is the relevant range of $s$.}
\label{figDChiApproxDiff}
\end{figure}

\subsection{Gradient and Hessian of Elastic Potential}

The elastic potential of the rod, $\bar{V}=\frac{1}{2}\left( \boldsymbol{%
\chi }-\boldsymbol{\chi }_{0}\right) ^{T}\mathbf{K}\left( \boldsymbol{\chi }-%
\boldsymbol{\chi }_{0}\right) $, is defined in terms of the constant
stiffness matrix $\mathbf{K}=\tfrac{1}{L}\mathrm{diag}\left(
GJ_{x},EI_{yy},EI_{zz},EA,GA,GA\right) $. Therein, $E=10\,$MPa and $G=0.3$%
\thinspace MPa is Young's modulus, $A$ is the cross section area, $%
I_{xx},I_{zz}$ are the second area moments, and $J_{x}$ is the polar area
moment. The gradient $\frac{\partial \bar{V}}{\partial \mathbf{X}}$ and
Hessian of $V$ are computed with (\ref{VGrad}) and (\ref{VHesse}). The
deviation of the gradient from the exact values near $\mathbf{x}=\mathbf{0}$
are shown in Fig. \ref{figRoddexpVGradTabZoom}. Also shown is the matrix
norm $\left\Vert \frac{\partial ^{2}\bar{V}}{\partial \mathbf{X}^{2}}%
\right\Vert $ of the Hessian. These singularities at $\mathbf{x}\left(
0.5\right) =\mathbf{0}$ are avoided by switching to a local approximation as
described in Sec. \ref{secApprox}. When using the 3rd-order approximations
and threshold $\epsilon =10^{-5}$, the numerical results match the exact
solutions up to computation precision. 
\begin{figure}[h]
\begin{center}
\includegraphics[draft=false,height=4.5cm]{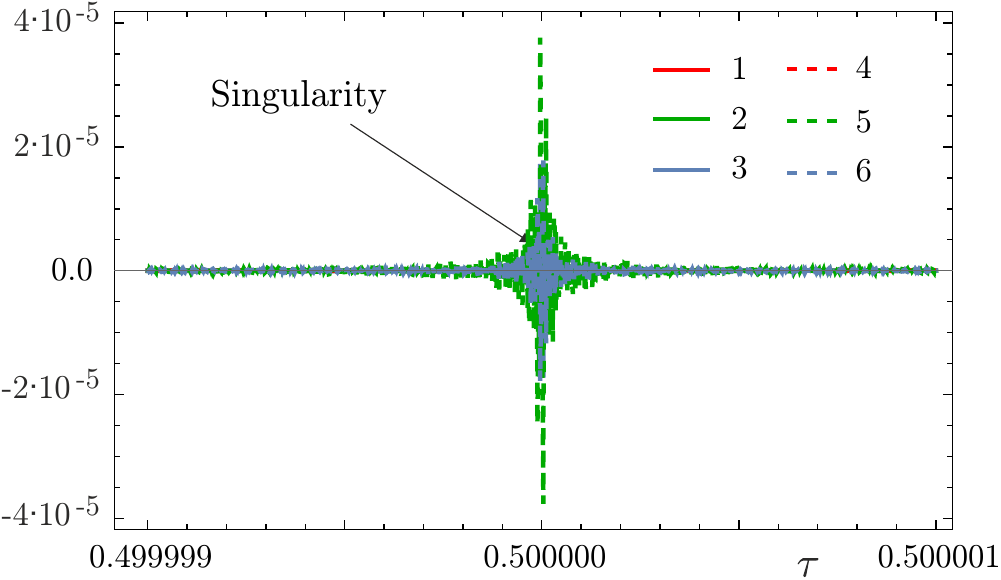}
~~\includegraphics[draft=false,height=4.5cm]{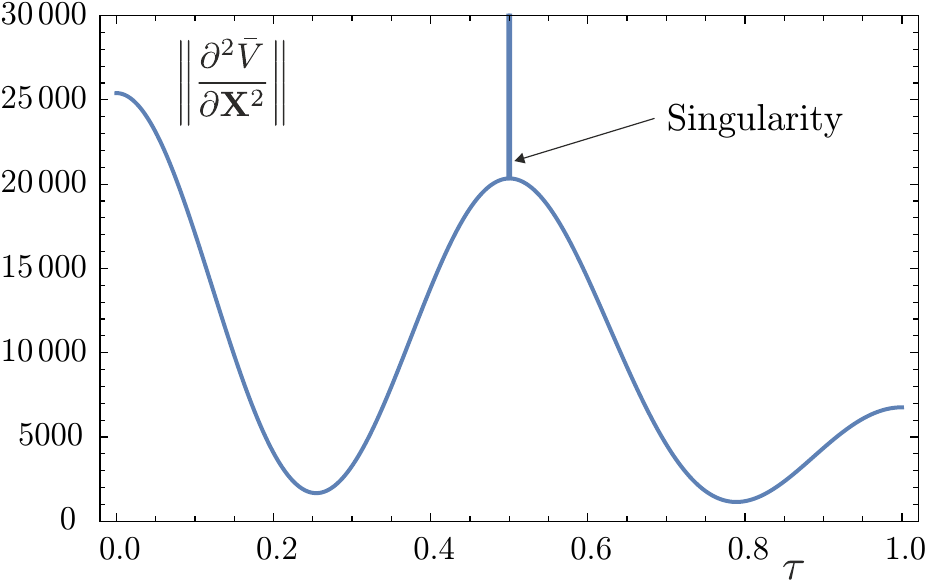} 
\vspace{-2ex}
\end{center}
\caption{Left: Components $i=1,\ldots ,6$ of the error of the gradient $%
\frac{\partial \bar{V}}{\partial \mathbf{X}}$ of the elastic potential
computed with (\protect\ref{VGrad}) using (\protect\ref{PartialDexp}).
Right: $L^{2}$ norm of the Hessian computed with (\protect\ref{VHesse})
using (\protect\ref{H123}).}
\label{figRoddexpVGradTabZoom}
\end{figure}

\section{Conclusion and Outlook}

The right-trivialized differential $\mathbf{dexp}_{\mathbf{X}}$, called the
tangent operator of the exponential map on $SE(3)$ at $\mathbf{X}$, appears
in various contexts of computational mechanics, robotics, and control. Being
a linear map, this tangent map is represented as $6\times 6$ matrix. In many
applications, the first and second directional derivative of the tangent
operator, as well the Jacobian and Hessian of mappings defined in terms of
the tangent operator are required. Traditionally, the latter is represented
in a block partitioned form that stems from the semi-direct product
structure of $SE(3)$. Consequently, the first directional derivative
reported in the literature involves second directional derivative of the
right-trivialized differential of the exponential map on $SO(3)$. This fact,
and the fact that each block entry is treated separately leads to convoluted
formulae and to computational issues at the (removable) singularity at $%
\mathbf{X}=\mathbf{0}$. The second directional derivative of $\mathbf{dexp}_{%
\mathbf{X}}$, or the Jacobian and Hessian of functions defined with it, have
not yet been reported. In this paper, the first and second derivatives, as
well the relevant Jacobian and Hessian were derived in closed form avoiding
the block partitioning. Additionally, higher-order local approximations were
derived for all closed form relations. These two results combined admit
numerically robust evaluation of relations needed in most applications. 
\color[rgb]{0,0,0}%
The switching between the respective exact and approximate relations was not
addressed in this paper. Switching methods, preferably with well-defined
accuracy, will be subject of further research. From a practical point of
view, while the results of the block partitioned and the introduced
formulation are indeed identical, the $6\times 6$ matrix representation is
easier to implement and involves fewer objects to manipulate. This shall be
obvious comparing (\ref{dexpSE3}) and (\ref{diffDexpSE3}), for instance.%
\color{black}%

For applications, where the canonical coordinates remain small, e.g. within
finite element methods or in flexible MBS dynamics, the exponential map may
be replaced by the Cayley map (or a higher-order Cayley map) on $SE(3)$. For
these alternative coordinate maps, the corresponding derivatives will be
derived in future research. They can be expected to be simpler than those
presented, which comes at the expense that the Cayley map is a local
approximation of the exponential map.

\color[rgb]{0,0,0}%

\section*{Funding}

This work has been supported by the "LCM-K2 Center for Symbiotic
Mechatronics" within the framework of the Austrian COMET-K2 program.

\section*{Data Availability Statement}

A Mathematica$^{\copyright }$ library comprising all presented formulae is
openly available at Github:
https://github.com/andreasmuellerjku/SO3-SE3-exp-Derivatives-Jacobian-Hessian%
\color{black}%

\section*{Conflicts of Interest}
The author declares that there is no conflict of interest regarding the publication of this work.

\appendix

\section{Exponential map and derivatives using block partitioning according
to the semidirect product}

\label{secAppSE3Block}

In the following, the closed form relations of the exp map and its
derivatives using the $3\times 3$ block partitioning, according to the
semi-direct product structure $SE\left( 3\right) =SO\left( 3\right) \ltimes {%
\mathbb{R}}^{3}$, are summarized. Their derivations can be found in \cite%
{RSPA2021}. In the following, the screw coordinate vector is $\mathbf{X}%
=\left( \mathbf{x},\mathbf{y}\right) \in {\mathbb{R}}^{3}\times {\mathbb{R}}%
^{3}$. The exponential map (\ref{expSE3}) can be written with the the
rotation matrix $\mathbf{R}=\exp \tilde{\mathbf{x}}$ as%
\begin{equation}
\exp (\hat{\mathbf{X}})=%
\begin{bmatrix}
\mathbf{R} & \ \tfrac{1}{\left\Vert \mathbf{x}\right\Vert ^{2}}(\mathbf{I}-%
\mathbf{R})\tilde{\mathbf{x}}\mathbf{y}+h\mathbf{y} \\ 
\mathbf{0} & 1%
\end{bmatrix}%
,\ \mathrm{for\ }\mathbf{x}\neq \mathbf{0}  \label{SE3exp1}
\end{equation}%
in which $h:=\mathbf{x}^{T}\mathbf{y}/\left\Vert \mathbf{x}\right\Vert ^{2}$
is the instantaneous pitch of the screw motion. The pitch for a pure
rotation is $h=0$. A pure translation corresponds to the limit $h=\infty $,
to which $\mathbf{X}=\left( \mathbf{0},\mathbf{y}\right) $ is assigned, and (%
\ref{SE3exp1}) becomes $\exp (\hat{\mathbf{X}})=\left[ 
\begin{smallmatrix}
\mathbf{I} & \mathbf{y} \\ 
\mathbf{0} & 1%
\end{smallmatrix}%
\right] $.

The block partitioned matrix form of the right-trivialized differential (\ref%
{dexpSE3Block}) on $SE\left( 3\right) $ involves the differential $\mathbf{%
dexp}_{\mathbf{x}}$ on $SO\left( 3\right) $ in (\ref{dexpSO31}), but also
its directional derivative along $\mathbf{y}\in {\mathbb{R}}^{3}$. The
latter admits the explicit expression%
\begin{eqnarray}
\left( \mathrm{D}_{\mathbf{x}}\mathbf{dexp}\right) 
\hspace{-0.5ex}%
\left( \mathbf{y}\right) &=&\tfrac{\beta }{2}\tilde{\mathbf{y}}+\delta
\left( \tilde{\mathbf{x}}\tilde{\mathbf{y}}+\tilde{\mathbf{y}}\tilde{\mathbf{%
x}}\right) +\tfrac{\mathbf{x}^{T}\mathbf{y}}{\left\Vert \mathbf{x}%
\right\Vert ^{2}}\left( \alpha -\beta \right) \tilde{\mathbf{x}}+\tfrac{%
\mathbf{x}^{T}\mathbf{y}}{\left\Vert \mathbf{x}\right\Vert ^{2}}\left( 
\tfrac{\beta }{2}-3\delta \right) \tilde{\mathbf{x}}^{2}
\label{diffDexpSO33} \\
(\mathrm{D}_{\mathbf{x}}\mathbf{dexp}^{-1})%
\hspace{-0.5ex}%
\left( \mathbf{y}\right) &=&-\tfrac{1}{2}\tilde{\mathbf{y}}+\frac{1}{%
\left\Vert \mathbf{x}\right\Vert ^{2}}\left( 1-\gamma \right) \left( \tilde{%
\mathbf{x}}\tilde{\mathbf{y}}+\tilde{\mathbf{y}}\tilde{\mathbf{x}}\right) +%
\tfrac{\mathbf{x}^{T}\mathbf{y}}{\left\Vert \mathbf{x}\right\Vert ^{4}}%
\left( \tfrac{1}{\beta }+\gamma -2\right) \tilde{\mathbf{x}}^{2}.
\label{diffDexpSO34}
\end{eqnarray}%
When $\mathbf{x}\rightarrow \mathbf{0}$, the limits of (\ref{diffDexpSO33})
and (\ref{diffDexpSO34}) are%
\begin{equation}
\left( \mathrm{D}_{\mathbf{0}}\mathbf{dexp}\right) 
\hspace{-0.5ex}%
\left( \mathbf{y}\right) =\tfrac{1}{2}\tilde{\mathbf{y}},\ \ (\mathrm{D}_{%
\mathbf{0}}\mathbf{Ddexp}^{-1})%
\hspace{-0.5ex}%
\left( \mathbf{U}\right) =-\tfrac{1}{2}\tilde{\mathbf{y}}.
\end{equation}%
The directional derivative of the differential $\mathbf{dexp}_{\mathbf{X}}$
on $SE\left( 3\right) $ in (\ref{dexpSE3Block}), and its inverse, along $%
\mathbf{U}=\left( \mathbf{u},\mathbf{v}\right) $ involves the second
derivative of $\mathbf{dexp}_{\mathbf{x}}$ on $SO\left( 3\right) $. Denote
the directional derivative (\ref{diffDexpSO33}) of the matrix $\mathbf{dexp}%
_{\mathbf{x}}$ with $\mathbf{Ddexp}\left( \mathbf{X}\right) :=(\mathrm{D}_{%
\mathbf{x}}\mathbf{dexp})%
\hspace{-0.5ex}%
\left( \mathbf{y}\right) $. Then (\ref{dexpSE3Block}) leads to%
\begin{eqnarray}
(\mathrm{D}_{\mathbf{X}}\mathbf{dexp})%
\hspace{-0.5ex}%
\left( \mathbf{U}\right) &=&%
\begin{bmatrix}
\mathrm{D}_{\mathbf{x}}\mathbf{dexp})%
\hspace{-0.5ex}%
\left( \mathbf{u}\right) & \mathbf{0} \\ 
(\mathrm{D}_{\mathbf{X}}\mathbf{Ddexp})%
\hspace{-0.5ex}%
\left( \mathbf{U}\right) & (\mathrm{D}_{\mathbf{x}}\mathbf{dexp})%
\hspace{-0.5ex}%
\left( \mathbf{u}\right)%
\end{bmatrix}
\label{diffDexpSE3} \\
(\mathrm{D}_{\mathbf{X}}\mathbf{dexp}^{-1})%
\hspace{-0.5ex}%
\left( \mathbf{U}\right) &=&%
\begin{bmatrix}
(\mathrm{D}_{\mathbf{x}}\mathbf{dexp}^{-1})%
\hspace{-0.5ex}%
\left( \mathbf{u}\right) & \mathbf{0} \\ 
(\mathrm{D}_{\mathbf{X}}\mathbf{Ddexp}^{-1})%
\hspace{-0.5ex}%
\left( \mathbf{U}\right) & (\mathrm{D}_{\mathbf{x}}\mathbf{dexp}^{-1})%
\hspace{-0.5ex}%
\left( \mathbf{u}\right)%
\end{bmatrix}%
.
\end{eqnarray}%
Therein $(\mathrm{D}_{\mathbf{X}}\mathbf{Ddexp})%
\hspace{-0.5ex}%
\left( \mathbf{U}\right) $ is the directional derivative of the matrix $%
\mathbf{Ddexp}\left( \mathbf{X}\right) $ along $\mathbf{U}=\left( \mathbf{u},%
\mathbf{v}\right) $. It can be expressed in closed form, and for the
inverse, as%
\begin{align}
(\mathrm{D}_{\mathbf{X}}& \mathbf{Ddexp})%
\hspace{-0.5ex}%
\left( \mathbf{U}\right) =\tfrac{\beta }{2}\tilde{\mathbf{v}}+\tfrac{\alpha
-\beta }{\left\Vert \mathbf{x}\right\Vert ^{2}}\left( (\mathbf{x}^{T}\mathbf{%
u})\tilde{\mathbf{y}}+(\mathbf{x}^{T}\mathbf{v+y}^{T}\mathbf{u})\tilde{%
\mathbf{x}}+(\mathbf{x}^{T}\mathbf{y})\tilde{\mathbf{u}}\right)  \notag \\
& -\tfrac{\beta }{2\left\Vert \mathbf{x}\right\Vert ^{2}}(\mathbf{x}^{T}%
\mathbf{u})(\mathbf{x}^{T}\mathbf{y})\tilde{\mathbf{x}}+\delta \left( \tilde{%
\mathbf{x}}\tilde{\mathbf{v}}+\tilde{\mathbf{v}}\tilde{\mathbf{x}}+\tilde{%
\mathbf{y}}\tilde{\mathbf{u}}+\tilde{\mathbf{u}}\tilde{\mathbf{y}}\right) 
\notag \\
& +\tfrac{\beta /2-3\delta }{\left\Vert \mathbf{x}\right\Vert ^{2}}\left( (%
\mathbf{x}^{T}\mathbf{y})(\tilde{\mathbf{x}}\tilde{\mathbf{u}}+\tilde{%
\mathbf{u}}\tilde{\mathbf{x}})+(\mathbf{x}^{T}\mathbf{u})(\tilde{\mathbf{x}}%
\tilde{\mathbf{y}}+\tilde{\mathbf{y}}\tilde{\mathbf{x}})+(\mathbf{x}^{T}%
\mathbf{v}+\mathbf{y}^{T}\mathbf{u})\tilde{\mathbf{x}}^{2}\right)  \notag \\
& +\tfrac{1}{\left\Vert \mathbf{x}\right\Vert ^{4}}(\mathbf{x}^{T}\mathbf{y}%
)(\mathbf{x}^{T}\mathbf{u})\left( (1-5\alpha +4\beta )\tilde{\mathbf{x}}%
+(\alpha -\tfrac{7}{2}\beta +15\delta )\tilde{\mathbf{x}}^{2}\right)  \notag
\\
(\mathrm{D}_{\mathbf{X}}& \mathbf{Ddexp}^{-1})%
\hspace{-0.5ex}%
\left( \mathbf{U}\right) =-\tfrac{1}{2}\tilde{\mathbf{v}}+\tfrac{1-\gamma }{%
\left\Vert \mathbf{x}\right\Vert ^{2}}\left( \tilde{\mathbf{x}}\tilde{%
\mathbf{v}}+\tilde{\mathbf{v}}\tilde{\mathbf{x}}+\tilde{\mathbf{y}}\tilde{%
\mathbf{u}}+\tilde{\mathbf{u}}\tilde{\mathbf{y}}\right) +\tfrac{1}{4}(%
\mathbf{x}^{T}\mathbf{u})(\tilde{\mathbf{x}}\tilde{\mathbf{y}}+\tilde{%
\mathbf{y}}\tilde{\mathbf{x}})  \notag \\
& -\tfrac{1}{\left\Vert \mathbf{x}\right\Vert ^{4}}%
\Big%
(\left( 1-\gamma \right) (\mathbf{x}^{T}\mathbf{u})(2+\gamma )(\tilde{%
\mathbf{x}}\tilde{\mathbf{y}}+\tilde{\mathbf{y}}\tilde{\mathbf{x}})  \notag
\\
& +\left( (\gamma +\tfrac{1}{\beta }-2)\left( (\mathbf{x}^{T}\mathbf{y})(%
\tilde{\mathbf{x}}\tilde{\mathbf{u}}+\tilde{\mathbf{u}}\tilde{\mathbf{x}})+(%
\mathbf{x}^{T}\mathbf{v}+\mathbf{y}^{T}\mathbf{u})\right) -\tfrac{1}{4}(%
\mathbf{x}^{T}\mathbf{y})(\mathbf{x}^{T}\mathbf{u})\right) \tilde{\mathbf{x}}%
^{2}%
\Big%
)  \notag \\
& +\tfrac{1}{\left\Vert \mathbf{x}\right\Vert ^{6}}(\mathbf{x}^{T}\mathbf{y}%
)(\mathbf{x}^{T}\mathbf{u})\left( 8-3\gamma -\gamma ^{2}-\tfrac{2}{\beta ^{2}%
}\left( \alpha +\beta \right) \right) \tilde{\mathbf{x}}^{2}.
\label{diff2dexpSO31}
\end{align}%
The limits for $\mathbf{X}\rightarrow \mathbf{0}$ are%
\begin{equation}
(\mathrm{D}_{\mathbf{0}}\mathbf{Ddexp})%
\hspace{-0.5ex}%
\left( \mathbf{U}\right) =\tfrac{1}{2}\tilde{\mathbf{v}}+\tfrac{1}{6}(\tilde{%
\mathbf{y}}\tilde{\mathbf{u}}+\tilde{\mathbf{u}}\tilde{\mathbf{y}}),\ (%
\mathrm{D}_{\mathbf{0}}\mathbf{Ddexp}^{-1})%
\hspace{-0.5ex}%
\left( \mathbf{U}\right) =-\tfrac{1}{2}\tilde{\mathbf{v}}+\tfrac{1}{12}(%
\tilde{\mathbf{y}}\tilde{\mathbf{u}}+\tilde{\mathbf{u}}\tilde{\mathbf{y}}).
\end{equation}

\section{%
\color[rgb]{0,0,0}%
Nomenclature}

\color[rgb]{0,0,0}%
\begin{tabular}{ll}
$\mathbf{x}\in \mathbb{R}^{3}$ & Vector of canonical coordinates on $SO(3)$
\\ 
$\mathbf{X}\in \mathbb{R}^{6}$ & Vector of canonical coordinates on $SE(3)$
\\ 
$\tilde{\mathbf{x}}\in so\left( 3\right) $ & Skew symmetric matrix
associated to vector $\mathbf{x}\in \mathbb{R}^{3}$ \\ 
$\hat{\mathbf{X}}\in se\left( 3\right) $ & $se\left( 3\right) $ matrix (\ref%
{Xhat}) associated to vector $\mathbf{X}\in \mathbb{R}^{6}$ \\ 
$\exp \mathbf{x},\exp \tilde{\mathbf{x}}$ & exponential map on $SO(3)$ \\ 
$\exp \mathbf{X},\exp \hat{\mathbf{X}}$ & exponential map on $SE(3)$ \\ 
$\mathbf{dexp}_{\mathbf{x}},\mathbf{dexp}_{\tilde{\mathbf{x}}}$ & Matrix
representation of the right-trivialized differential on $SO\left( 3\right) $
\\ 
& $\mathbf{x}\in \mathbb{R}^{3}$ is the vector of canonical coordinates \\ 
$\mathbf{dexp}_{\mathbf{X}},\mathbf{dexp}_{\hat{\mathbf{X}}}$ & Matrix
representation of the right-trivialized differential on $SE\left( 3\right) $
\\ 
& $\mathbf{X}\in \mathbb{R}^{6}$ is the vector of canonical coordinates \\ 
$\left( \mathrm{D}_{\mathbf{X}}\mathbf{dexp}\right) 
\hspace{-0.5ex}%
\left( \mathbf{Y}\right) $ & Directional derivative of $\mathbf{dexp}$ at $%
\mathbf{X}\in \mathbb{R}^{6}\cong se\left( 3\right) $ along $\mathbf{Y}\in 
\mathbb{R}^{6}$ \\ 
$\left( \mathrm{D}_{\mathbf{X}}^{2}\mathbf{dexp}\right) 
\hspace{-0.5ex}%
\left( \mathbf{Y}\right) \left( \mathbf{U}\right) $ & Second directional
derivative of $\mathbf{dexp}$ at $\mathbf{X}\in \mathbb{R}^{6}\cong se\left(
3\right) $ along $\mathbf{Y},\mathbf{U}\in \mathbb{R}^{6}$ \\ 
$\mathbf{V}=\left[ 
\begin{smallmatrix}
\boldsymbol{\omega } \\ 
\mathbf{v}%
\end{smallmatrix}%
\right] \in \mathbb{R}^{6}\cong se\left( 3\right) $ & Twist of a frame /
rigid body \\ 
$\boldsymbol{\chi }=\left[ 
\begin{smallmatrix}
\boldsymbol{\kappa } \\ 
\boldsymbol{\rho }%
\end{smallmatrix}%
\right] \in \mathbb{R}^{6}\cong se\left( 3\right) $ & Deformation measure of
a Cosserat rod \\ 
$\mathbf{ad}_{\mathbf{X}}=\left[ 
\begin{smallmatrix}
\tilde{\mathbf{x}} & \mathbf{0} \\ 
\tilde{\mathbf{y}} & \tilde{\mathbf{x}}%
\end{smallmatrix}%
\right] $ & Matrix representation of adjoint operator on $SE(3)$. Then $%
\left[ \mathbf{X},\mathbf{Y}\right] =\mathbf{ad}_{\mathbf{X}}\mathbf{Y}$ is
the Lie bracket on $se\left( 3\right) $ in vector representation. \\ 
$\overline{\mathbf{ad}}_{\mathbf{U}}:=%
\begin{bmatrix}
\tilde{\mathbf{u}} & \tilde{\mathbf{v}} \\ 
\tilde{\mathbf{v}} & \mathbf{0}%
\end{bmatrix}%
$ & Skew symmetric matrix so that $\mathbf{ad}_{\mathbf{X}}^{T}\mathbf{U}=%
\overline{\mathbf{ad}}_{\mathbf{U}}\mathbf{X}$ \\ 
$f\left( \mathbf{X}\right) =\mathbf{dexp}_{\mathbf{X}}\mathbf{Z}$ & 
Evaluation map of $\mathbf{dexp}$ at $\mathbf{X}$ - $\mathbf{dexp}_{\mathbf{X%
}}$ evaluated with $\mathbf{Z}$ \\ 
$\bar{f}\left( \mathbf{X}\right) =\mathbf{dexp}_{\mathbf{X}}^{T}\mathbf{Z}$
& Evaluation map of $\mathbf{dexp}_{\mathbf{X}}^{T}$ at $\mathbf{X}$ - $%
\mathbf{dexp}_{\mathbf{X}}^{T}$ evaluated with $\mathbf{Z}$ \\ 
$h\left( \mathbf{X}\right) =\mathbf{Q}^{T}\mathbf{dexp}_{\mathbf{X}}\mathbf{Z%
}$ & Bilinear map obtained as pairing of evaluation map of $\mathbf{dexp}_{%
\mathbf{X}}\mathbf{Y}$ with $\mathbf{Q}\in \mathbb{R}^{6}\cong se^{\ast
}\left( 3\right) $%
\end{tabular}%
\newline

\color{black}%
\bibliographystyle{IEEEtran}
\bibliography{dexpDerivatives}

\end{document}